\documentclass[reqno, 12pt]{amsart}

\usepackage{_pascal/_pascal}

\hypersetup{
    pdftitle={Tree-independence number of P5-free graphs with no large bicliques},
    pdfauthor={},
}

\newcommand{\tin}{\mathsf{tree}\textnormal{-} \alpha}
\newcommand{\NN}{N_{\overline{r}}}

\begin{document}

\author[Blažej]{Václav Blažej$^\ast$}
\address[$\ast$]{Institute of Informatics, University of Warsaw, Warsaw, Poland}
\email{\tt v.blazej@uw.edu.pl}

\author[Gollin]{J.~Pascal Gollin$^\dagger$}
\address[$\dagger$]{FAMNIT, University of Primorska, Koper, Slovenia.}
\email{\tt pascal.gollin@famnit.upr.si}

\author[Hons]{Tomáš Hons$^\S$}
\address[$\S$]{Computer Science Institute, Charles University, Czech Republic; and Institute of Computer Science, Czech Academy of Sciences, Czech Republic}
\email{\tt honst@iuuk.mff.cuni.cz}

\author[Masařík]{Tomáš Masařík$^\P$}
\address[$\P$]{Institute of Informatics, University of Warsaw, Warsaw, Poland}
\email{\tt masarik@mimuw.edu.pl}

\author[Milanič]{Martin Milanič$^{\dagger,\ddagger}$}
\address[$\ddagger$]{IAM, University of Primorska, Koper, Slovenia}
\email{\tt martin.milanic@upr.si}

\author[Rzążewski]{Paweł Rzążewski$^\parallel$}
\address[$\parallel$]{Warsaw University of Technology, Warsaw, Poland}
\email{\tt pawel.rzazewski@pw.edu.pl}

\author[Suchý]{Ondřej Suchý$^{\#}$}
\address[$\#$]{Faculty of Information Technology, Czech Technical University in Prague, Prague, Czech Republic}
\email{\tt ondrej.suchy@fit.cvut.cz}

\author[Wesolek]{Alexandra Wesolek$^\top$}
\address[$\top$]{CNRS, LaBRI, University of Bordeaux, Bordeaux, France}
\email{\tt alexandra.wesolek@u-bordeaux.fr}

\title[Tree-independence number of $P_5$-free graphs with no large bicliques]{Tree-independence number of $P_5$-free graphs with no large bicliques}

\date{\today}
\keywords{tree-independence number, independence degeneracy, independence treewidth, $P_5$-free graphs}
\subjclass[2020]{05C75, 05C40, 05C05}

\begin{abstract}
    The tree-independence number of a graph is the minimum, over all tree-decompositions of the graph,
    of the maximum size of an independent set contained in a bag.
    Graph classes of bounded tree-independence number have strong structural and algorithmic properties,
    but the parameter can be unbounded even in quite restricted classes.
    In particular, the presence of an induced biclique $K_{\ell,\ell}$ forces tree-independence number at least $\ell$.
    This leads to the question whether large induced bicliques are the only obstruction to bounded tree-independence number in natural hereditary classes.
    A conjecture of Dallard, Krnc, Kwon, Milani\v{c}, Munaro, \v{S}torgel, and Wiederrecht states that for all positive integers $t$ and $\ell$, every $\{P_t,K_{\ell,\ell}\}$-free graph has bounded tree-independence number. We prove this conjecture for $t=5$ by showing that every $\{P_5,K_{\ell,\ell}\}$-free graph has tree-independence number at most $4\ell$.
    We also obtain related bounds for the weaker parameter of $\alpha$-degeneracy.
\end{abstract}

\maketitle

\section{Introduction}
\label{sec:intro}
For many graph parameters, one can identify natural substructures whose presence forces the parameter to be large. 
A basic question is whether, inside a hereditary graph class,\footnote{A graph class is \emph{hereditary} if it is closed under taking induced subgraphs.} these obvious obstructions are the only reason for large parameter values. 
A classical example is $\chi$-boundedness: which hereditary graph classes have chromatic number bounded in terms of clique number? 
This notion, introduced by Gy{\'a}rf{\'a}s~\cite{MR0951359}, has been extensively studied; see, for instance, the survey~\cite{MR4174126}.

Other examples include bounding treewidth in terms of clique number (see~\cite{MR4332111,MR4334541}), bounding pathwidth in terms of clique number (see~\cite{MR3215457,hajebi2025polynomialboundspathwidth}), bounding degeneracy in terms of clique number (see~\cite{MR3794363}), bounding minimum degree in terms of chromatic number (see~\cite{MR2811077}), bounding minimum degree in terms of the largest size of a balanced biclique 
(see~\cite{MR4931191}), and bounding coloring number in terms of chromatic number (see~\cite{MR1304254}). 

In this paper we study an analogous question for \emph{tree-independence number} and \emph{induced biclique number}. 
Tree-independence number was introduced independently by Yolov~\cite{MR3775804} (under the name \emph{$\alpha$-treewidth}) and by Dallard, Milani{\v c}, and {\v S}torgel~\cite{MR4664382}. 
It is a relaxation of treewidth that can be bounded even on dense graph classes; at the same time, bounded tree-independence number still yields strong structural and algorithmic consequences (see~\cite{MR4664382,LMMORS24} and \zcref{sec:prelim} for definitions). 
The \emph{induced biclique number} of a graph $G$ is the maximum integer $\ell$ such that $G$ contains an induced subgraph isomorphic to $K_{\ell,\ell}$. 
Since induced biclique number is a lower bound on tree-independence number~\cite{MR4664382}, it is natural to ask the following.

\begin{question}\label{question:main}
For which classes of graphs is tree-independence number bounded from above in terms of induced biclique number?
\end{question}

Before we discuss this question, let us introduce some terminology.
Given a graph $G$ and a set $\mathcal{F}$ of graphs, we say that $G$ is \emph{$\mathcal{F}$-free} if no induced subgraph of $G$ is isomorphic to a member of $\mathcal{F}$; for a graph $H$, we write \emph{$H$-free} for \emph{$\{H\}$-free}. 
We denote the $t$-vertex path by $P_t$, the disjoint union of two graphs $H$ and $H'$ by $H+H'$, and the disjoint union of $t$ copies of a graph $H$ by $tH$.

\zcref{question:main} is meaningful already for quite restricted classes: for instance, grid graphs have induced biclique number at most $2$, but unbounded tree-independence number~\cite{MR4730297}. 
On the positive side, tree-independence number is known to be bounded in terms of induced biclique number for $P_4$-free graphs~\cite{DKKMMSW2024}, $(P_3+P_1)$-free graphs~\cite{HMV25}, graph classes of bounded \textsl{induced matching treewidth}~\cite{MR4906164}, and consequently also $tP_2$-free graphs, for every positive integer $t$.
These results motivate the following refinement of \zcref{question:main}.

\begin{question}\label{question:H-tree-alpha}
For which graphs $H$ is tree-independence number of $H$-free graphs bounded from above in terms of induced biclique number?
\end{question}

A related question, asking for which graphs $H$ the class of $H$-free graphs has bounded tree-independence number, was answered by Dallard, Milanič, and Štorgel~\cite{MR4730297}: this happens if and only if $H$ is edgeless or an induced subgraph of $P_3$. 
Moreover, \zcref{question:H-tree-alpha} is equivalent to asking for which graphs $H$ and every positive integer $\ell$ the class of $\{H,K_{\ell,\ell}\}$-free graphs has bounded tree-independence number. 
A necessary condition is that $H$ is a disjoint union of paths, by a result of Lozin and Razgon~\cite{MR4385180} on hereditary classes of bounded treewidth defined by finitely many forbidden induced subgraphs; see also~\cite{DKKMMSW2024}.

It is open whether this necessary condition is also sufficient. 
Since every disjoint union of paths is an induced subgraph of a path, a positive answer would follow from the following conjecture.

\begin{conjecture}[Dallard, Krnc, Kwon, Milanič, Munaro, Štorgel, and Wiederrecht~\cite{DKKMMSW2024}]\label{conj:DKKMMSW2024}
For every two positive integers $t$ and $\ell$, the class of $\{P_t,K_{\ell,\ell}\}$-free graphs has bounded tree-independence number.
\end{conjecture}

This conjecture is known to hold for $t \leq 4$ and every $\ell$~\cite{DKKMMSW2024}. 
On the other hand, it is open already for $\ell=2$ and every $t \geq 7$~\cite{chudnovsky2026treeindependencenumberforbiddeninduced}, while the case $\ell=1$ is trivial. 
One can also ask for analogous statements with an unbalanced biclique. 
For example, boundedness of tree-independence number is known to hold for $\{P_t,K_{1,\ell}\}$-free graphs~\cite{DKKMMSW2024} (see also~\cite{DBLP:conf/wg/ChoiHMW25}), whereas the case of $\{P_t,K_{2,\ell}\}$-free graphs is already open for all $t \geq 7$ and $\ell \geq 2$; it was only recently settled for $t \in \{5,6\}$ and arbitrary $\ell$~\cite{chudnovsky2026treeindependencenumberforbiddeninduced}.

\subparagraph*{Our focus.}
We study the first open case of \zcref{conj:DKKMMSW2024} with respect to $t$, namely $t=5$. 
This is a natural next step in the context of \zcref{question:H-tree-alpha}, because $P_5$ contains each graph in $\{P_4,P_3+P_1,2P_2\}$ as an induced subgraph.

At first glance, one might expect $P_5$-free graphs to behave like a mild extension of $P_4$-free graphs, that is, cographs. 
In reality, the class is considerably richer. 
For instance, the complexity of \textsc{Independent Set} on $P_5$-free graphs remained open for more than twenty years, and its eventual polynomial-time solution required substantial new machinery~\cite{DBLP:conf/soda/LokshantovVV14}. 
Likewise, both the Erd\H{o}s--Hajnal property and polynomial $\chi$-boundedness for $P_5$-free graphs were established only very recently~\cite{MR5049499,nguyen2025polynomialchiboundednessexcludingp5}, after a long sequence of partial results~\cite{BonamyBPRTW2022,MR3010736,MR4174126,MR3150572,MR634555,MR4648583}. 
These examples suggest that the class of $P_5$-free graphs is structurally rich but still sufficiently tame to admit strong decomposition theorems and algorithmic consequences.

Another indication of this intermediate behavior is that $P_5$-free graphs of bounded clique number have bounded \textsl{mim-width}~\cite{MR4293026}, a parameter that is very useful algorithmically~\cite{DBLP:conf/soda/BergougnouxDJ23}. 
From this perspective, \zcref{conj:DKKMMSW2024} can be viewed as a biclique analogue of such boundedness phenomena. 
It is also worth noting that if one excludes both a clique and a biclique, then $P_t$-free graphs, for every fixed $t$, have bounded treewidth~\cite{DBLP:conf/swat/AtminasLR12}.

\subparagraph*{Our results.}
Our main theorem confirms the case $t=5$ of \zcref{conj:DKKMMSW2024} and answers \zcref{question:H-tree-alpha} for the graph $P_5$.

\begin{restatable}{theorem}{thmtreealpha}
\label{thm:P5treealpha}
For every positive integer $\ell$, every $P_5$-free graph with no induced $K_{\ell,\ell}$ has tree-independence number at most $4\ell$.
\end{restatable}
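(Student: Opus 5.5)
We build the decomposition recursively via balanced separators. The first step is to invoke the standard ``separator-to-decomposition'' lemma for tree-independence number, in the style of Robertson--Seymour for treewidth, adapted to independence number. It says: if for every induced subgraph $G'$ of $G$ and every set $W\subseteq V(G')$ with $\alpha(W)\le c$ there is a set $S$ that is a balanced separator for $W$ and has $\alpha(S)$ small, then $\tin(G)$ is bounded. In that lemma, ``balanced'' means that every component of $G'-S$ contains at most half of a maximum independent set of $W$. So the bulk of the work is a separator lemma for $\{P_5,K_{\ell,\ell}\}$-free graphs: every weight/terminal set $W$ admits a balanced separator $S$ with $\alpha(S)$ of order $\ell$. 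To get the exact constant $4\ell$, I would aim to split the budget as $S$ having $\alpha(S)\le 2\ell$, and use the standard accounting in which a bag is $W\cup S$ with $\alpha(W)\le 2\ell$.

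The structural input is the classical fact that in a connected $P_5$-free graph there is a dominating clique or a dominating induced $P_3$ (Bacsó--Tuza). More useful here is the following refinement applied to a component $C$ containing more than half the weight. Take a vertex $v$ and consider $N(v)$ and the non-neighbourhood. By $P_5$-freeness, for any component $D$ of $G-N[v]$, every vertex of $N(v)$ with a neighbour in $D$ is either complete or anticomplete to each anticomplete pair of $D$. The plan is to grow a separator of the form $S=N(A)\cap N(B)$ for two independent sets $A,B$ with $|A|,|B|<\ell$. Here $K_{\ell,\ell}$-freeness bounds $\alpha(N(A)\cap N(B))$: if $N(A)\cap N(B)$ contained an independent set of size $\ell$ together with $A$ of size $\ell$, we would obtain an induced $K_{\ell,\ell}$. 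So I would extract common neighbourhoods of independent $\ell$-sets as cheap separators, and use $P_5$-freeness (a vertex of $D$ together with the ``private'' neighbours forms an induced path) to show that these common neighbourhoods actually separate.

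Concretely, the key steps are as follows. (i) Pick a maximal independent set $I$ of the heavy part, and greedily pick independent vertices $a_1,a_2,\dots$ whose removal of common neighbourhoods splits $W$. (ii) Show that if the process runs for $\ell$ steps without balancing, then we find an $\ell$-independent set $A$ and an independent $\ell$-set inside $\bigcap N(a_i)$, which is forbidden. (iii) Otherwise, fewer than $\ell$ steps give a set $A$ with $\alpha(N(A)\text{-core})<\ell$. Adding $A$ itself and a symmetric set $B$ gives $\alpha(S)\le 2\ell$. The main obstacle is step (ii)/(iii): proving that the \emph{common} neighbourhood, rather than the full neighbourhood (whose independence number is unbounded), separates the heavy component. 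This is where $P_5$-freeness must be used. If a vertex $x$ adjacent to $a_1$ but not $a_2$ connected pieces of $W$ on different sides, then together with a path through $a_2$ it would yield an induced $P_5$. I expect a careful case analysis on the anticomplete pairs of components to be needed here. If this direct approach fails, I would instead fall back on the weaker $\alpha$-degeneracy bound first and bootstrap from it.
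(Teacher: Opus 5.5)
\textbf{There is a genuine gap: your proposal is a plan, not a proof.} Everything rests on a balanced-separator lemma for $\{P_5,K_{\ell,\ell}\}$-free graphs, with separators of the form $N(A)\cap N(B)$ and $\alpha\le 2\ell$. That lemma is never stated precisely or proved, and steps (ii)/(iii), which you flag as the main obstacle, are exactly it. The ingredients you list for it also fail as stated.

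\emph{The $K_{\ell,\ell}$ bound.} $K_{\ell,\ell}$-freeness only bounds the independence number of the \emph{common} neighbourhood of an independent set of size at least $\ell$. For $|A|,|B|<\ell$ it gives nothing. For instance, $K_{\ell-1,n}$ is $\{P_5,K_{\ell,\ell}\}$-free, and if $A=B$ is its $(\ell-1)$-side, then $N(A)\cap N(B)$ is an independent set of size $n$.

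\emph{The $P_5$ fact.} Your claim about $N(v)$ versus components of $G-N[v]$ is false. Take the graph on $v,u,d,m,d'$ with edges $vu,ud,um,dm,md'$. It is $P_5$-free, $\{d,m,d'\}$ is a component of $G-N[v]$, and $u$ is mixed on both the edge $md'$ and the non-edge $dd'$. A vertex $u\in N(v)$ mixed on an edge $d_1d_2$ only gives the induced $P_4$ $v,u,d_1,d_2$. A $P_5$ needs a fifth vertex, such as a neighbour of $v$ anticomplete to $\{u,d_1,d_2\}$. This is why the paper always argues with two nonadjacent neighbours $x,y$ of one vertex $r$.

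\emph{The constant.} The bookkeeping for $4\ell$ does not close. With $\alpha(W)\le 2\ell$ and $\alpha(S)\le 2\ell$, the interface $(W\cap C)\cup S$ handed to a component can have independence number about $3\ell$, so the invariant is not preserved. The Robertson--Seymour scheme gives roughly three times the separator bound, i.e.\ about $6\ell$.

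\textbf{Your fallback is the paper's route, but there the bootstrapping is the whole difficulty.}
\begin{itemize}
\item The paper first shows that every vertex $r$ of a maximum independent set $I$ has $\alpha(N[r])<2\ell$. By K\"onig's theorem, an independent $J\subseteq N(r)$ is matched, up to one vertex, into $I\setminus\{r\}$. The resulting bipartite graph is $2K_2$-free, since a $2K_2$ together with $r$ would form a $P_5$. So its neighbourhoods are nested, and $|J|\ge 2\ell$ would force an induced $K_{\ell,\ell}$.
\item It then inducts on $G-r$ and fixes a decomposition of $G-r$ with $\alpha\le 4\ell$ that maximizes the number of pairs of $N(r)$ sharing a bag.
\item Its key lemma restructures this decomposition around a non-sharing pair $x,y$, using bad pairs, movable vertices, a master copy of $T$, and re-attached copies for the components of $G-r-M$. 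This shows that all of $N(r)$ lies in one bag, and a leaf bag $N[r]$ finishes.
\item The constant $4\ell$ arises as $2\ell+(\ell-1)+(\ell-1)$ in the restructured bags.
\end{itemize}
Knowing $\alpha(N[r])<2\ell$ alone does not let you extend a decomposition of $G-r$ without raising bag independence numbers, and the increase accumulates over the induction. Without an argument of this kind, the fallback is not a proof either.
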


In particular, tree-independence number of $P_5$-free graphs is bounded in terms of induced biclique number. 
Actually, the proof of \zcref{thm:P5treealpha} is algorithmic in the following strong sense: given a $P_5$-free graph and an integer $\ell$, in polynomial time we can either find an induced $K_{\ell,\ell}$ or construct a tree-decomposition of independence number at most $4\ell$. In particular, the complexity of the algorithm depends only polynomially on $\ell$.

We prove \zcref{thm:P5treealpha} by induction on the number of vertices. 
The key intermediate result is a bound on a weaker parameter, which is of independent interest.

For a graph $G$, the \emph{$\alpha$-degeneracy} of $G$ is the smallest integer $k$ such that every non-null
%\TM{Why we do not call it nonempty?}\MM{The term ``null graph'' commonly means the graph with no vertices; some readers may confuse nonempty with ``having at least one edge''. Actually, in this case it would not make a difference, so feel free to change it if you wish :)} 
%\prz{I think saying that the use of 'commonly' in any statement like this is an overstatement}\TM{Thanks for explanation. I would prefer nonempty, but I do not want to force change if more people like non-null.}
%\MM{Sure, 'commonly' may be an overstatement, 'often', or even 'sometimes' would be more appropriate. I added the definition of ``null graph'' to the preliminaries, please delete it if you change to nonempty (which I definitely do not mind).}
induced subgraph $H$ of $G$ contains a vertex $v$ with $\alpha(H[N[v]]) \leq k$. 
This parameter was considered previously in the literature~\cite{MR2916349}; the definition there uses open neighborhoods, but for our purposes the closed-neighborhood version is more convenient, and the two coincide on every graph with at least one edge. 
It is easy to see that $\alpha$-degeneracy is a lower bound on tree-independence number; see \zcref{lem:tree-alpha-deg}. 
Thus, bounded $\alpha$-degeneracy is a necessary condition for bounded tree-independence number, and it is natural to ask for a corresponding analogue of \zcref{conj:DKKMMSW2024}.

\begin{conjecture}\label{conj:alpha-degeneracy}
For every two positive integers $t$ and $\ell$, the class of $\{P_t,K_{\ell,\ell}\}$-free graphs has bounded $\alpha$-degeneracy.
\end{conjecture}

It follows from the aforementioned results on variants of \zcref{conj:DKKMMSW2024} that the analogues of \zcref{conj:alpha-degeneracy} with $K_{\ell,\ell}$ replaced by $K_{1,\ell}$ are known for all $t$, and with $K_{\ell,\ell}$ replaced by $K_{2,\ell}$ are known for $t \leq 6$; %\MM{I added the above text in red, is it ok? Otherwise, if you prefer the references, they would be~\cite{DKKMMSW2024,chudnovsky2026treeindependencenumberforbiddeninduced} (although these papers say nothing about $\alpha$-degeneracy)} %\TM{Looks Good}
the latter remains open for all $\ell \geq 2$ and $t \geq 7$.

The parameter $\alpha$-degeneracy leads to a companion version of \zcref{question:H-tree-alpha}. 
Indeed, every vertex in a balanced biclique $K_{\ell,\ell}$ has an independent neighborhood of size $\ell$, so induced biclique number is a lower bound not only on tree-independence number, but also on $\alpha$-degeneracy.

\begin{question}\label{question:H-alpha-degeneracy}
For which graphs $H$ is $\alpha$-degeneracy of $H$-free graphs bounded from above in terms of induced biclique number?
\end{question}

Equivalently, \zcref{question:H-alpha-degeneracy} asks for which graphs $H$ and every positive integer $\ell$ the class of $\{H,K_{\ell,\ell}\}$-free graphs has bounded $\alpha$-degeneracy. 
A necessary condition is that $H$ is a forest, by combining a theorem of Kierstead and Penrice~\cite{MR1258244} on hereditary classes of bounded degeneracy with Ramsey's theorem; see also~\cite[Theorem 22]{MR4508171}.

Our first ingredient toward \zcref{thm:P5treealpha} proves \zcref{conj:alpha-degeneracy} for $t=5$.

\begin{theorem}\label{thm:alpha-degeneracy-P5-Kll-simplified}
For every $\ell \geq 2$, every non-null $\{P_5,K_{\ell,\ell}\}$-free graph has a vertex $v$ such that $\alpha(N[v]) < 2\ell$.
\end{theorem}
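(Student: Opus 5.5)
The plan is to argue by contradiction. Fix $\ell\ge 2$ and a non-null $\{P_5,K_{\ell,\ell}\}$-free graph $G$ in which every vertex $v$ satisfies $\alpha(N[v])\ge 2\ell$. Since the property is hereditary, we may pass to a connected component, so assume $G$ is connected. We then look for an induced $P_5$ or an induced $K_{\ell,\ell}$, using an extremal choice of vertices. Concretely, pick a vertex $v$ and an independent set $I\subseteq N(v)$ with $|I|=2\ell$. Then pick $u\in I$ and an independent set $J\subseteq N(u)$ with $|J|=2\ell$. Among all such choices we make an extremal one; the first candidate is to maximize $|J\setminus N[v]|$, and an alternative is to take $v$ with $|N[v]|$ inclusion-minimal.

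The core tool is the following observation. Let $A=I\setminus\{u\}$, whose members are adjacent to $v$ but not to $u$, and let $B=J\setminus N[v]$, whose members are adjacent to $u$ but not to $v$. Then the bipartite graph between $A$ and $B$ is $2K_2$-free: if $a_1b_1,a_2b_2$ are edges and $a_1b_2,a_2b_1$ are non-edges, then $b_1a_1va_2b_2$ is an induced $P_5$, since $A$ and $B$ are independent sets and no $b_i$ is adjacent to $v$. Hence this bipartite graph is a chain graph, and the neighbourhoods in $B$ of the vertices of $A$ are nested. A simple counting argument on chain graphs then gives a dichotomy once $|A|,|B|\ge 2\ell-1$. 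Either the $\ell$ vertices of $A$ with largest neighbourhoods all have at least $\ell$ neighbours in $B$, which yields an induced $K_{\ell,\ell}$ (recall $A$ and $B$ are independent). Or there are $A'\subseteq A$ and $B'\subseteq B$ with $|A'|,|B'|\ge \ell$ and $A'$ anticomplete to $B'$. In the second case every $b\in B'$, $a\in A'$ give an induced $P_4$ $b\,u\,v\,a$. We then extend this $P_4$ using the hypothesis at $a$ (or at $b$): $a$ has an independent set of size $2\ell$ in its closed neighbourhood. Repeating the chain-graph argument on the pair (neighbours of $a$ off $N[v]$, vertices of $B'$) should either produce $P_5$ or $K_{\ell,\ell}$, or contradict the extremal choice.

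The key steps, in order, are as follows.
\begin{enumerate}
\item Set up the extremal choice of $(v,u,I,J)$.
\item Show that if $J$ lies mostly inside $N(v)$, i.e.\ $|B|<2\ell-1$, we can replace $v$ or $u$ and improve the extremal quantity. For instance, if many vertices of $J$ lie in $N(v)$, then $J\cup\{\dots\}$ gives a large independent set in $N(v)$ with more vertices adjacent to $u$, and we swap roles.
\item Prove the $2K_2$-freeness and the chain-graph dichotomy above.
\item Kill the anticomplete case $A'$, $B'$ by another application of the same $P_5$ argument, centred at a vertex of $A'$ or $B'$.
\end{enumerate}
The threshold $2\ell$ comes from needing about $\ell$ vertices on each side after discarding at most $\ell-1$ vertices in the chain-graph split.

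I expect the main obstacle to be step 2 together with step 4. The $P_5$ argument only controls vertices lying outside $N[v]$, so one must rule out that every large independent set in $N(u)$ stays essentially inside $N(v)$; this is exactly where the extremal choice has to do the work. If maximizing $|J\setminus N[v]|$ does not suffice, the fallback is to use the connectivity of $G$ together with the known fact that a connected $P_5$-free graph has a dominating clique or dominating induced $P_3$. We would then take $v$ in this dominating set, so that every other vertex is close to $v$. This bounds the ways a $P_4$ such as $b\,u\,v\,a$ can fail to extend, and should force the anticomplete pair in step 4 to produce an induced $P_5$.
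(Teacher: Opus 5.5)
There is a genuine gap. Your steps 1 and 3 are fine: the path $b_1a_1va_2b_2$ is an induced $P_5$, and the chain-graph dichotomy is correct once $|A|,|B|\ge 2\ell-1$. The problem is the second alternative of that dichotomy: sets $A'\subseteq A$, $B'\subseteq B$ with $|A'|,|B'|\ge \ell$ and $A'$ anticomplete to $B'$. Nothing you have derived contradicts it, and the induced $P_4$ $b\,u\,v\,a$ is perfectly consistent on its own. Applying the hypothesis at $a$ or $b$ and ``repeating the chain-graph argument'' just produces a new configuration of the same type. You give no quantity that strictly improves, so nothing guarantees termination. Step~2 has the same defect: you never say which extremal quantity the swap improves, or why. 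None of your candidate tools (maximizing $|J\setminus N[v]|$, inclusion-minimality of $N[v]$, a dominating clique or dominating induced $P_3$) is actually shown to exclude the anticomplete case. Steps 2 and 4 are exactly where the difficulty lies, and they remain unproved.

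The missing idea is to choose the side outside $N[v]$ so that it is \emph{matched} to the side inside $N(v)$. Then the anticomplete alternative cannot occur.
\begin{enumerate}
\item Take $v$ in a \emph{maximum} independent set $S$ of $G$, and let $J$ be a maximum independent set in $N[v]$. If $|J|\ge 2$, then $J\subseteq N(v)$ and $J\cap S=\emptyset$.
\item Pair $J$ with $S\setminus\{v\}$, which lies outside $N[v]$ automatically. Here $J$ plays the role of your $A$ and $S\setminus\{v\}$ the role of your $B$. Your $P_5$ argument shows the bipartite graph between them is $2K_2$-free.
\item Maximality of $S$ gives $\alpha(G[J\cup(S\setminus\{v\})])\le |S|$. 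By K\"onig's theorem, there is a matching covering all but at most one vertex of $J$.
\item Order the matched vertices $x_1,\dots,x_m$ of $J$ by nested neighbourhoods. Each $x_i$ with $i\ge\ell$ is adjacent to the distinct partners of $x_1,\dots,x_\ell$. So $m\ge 2\ell-1$ gives an induced $K_{\ell,\ell}$, and hence $|J|\le 2\ell-1$.
\end{enumerate}
The matching is precisely what forces the first alternative of your dichotomy. This gives $\alpha(N[v])<2\ell$ directly for every vertex of a maximum independent set. It needs no contradiction hypothesis, no connectivity, and no extremal choice of a second vertex $u$.
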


This is precisely the vertex deleted in the inductive proof of \zcref{thm:P5treealpha}. 
Our approach in fact works in a more general setting. 
Let $S_d$ denote the graph obtained from the $d$-leaf star $K_{1,d}$ by subdividing each edge once, so that $S_2=P_5$.

\begin{theorem}\label{thm:alpha-degeneracy-Sd-Kll-simplified}
For every two integers $d,\ell \geq 2$, every non-null $\{S_d,K_{\ell,\ell}\}$-free graph has a vertex $v$ such that $\alpha(N[v]) < d^2\ell + 2d\ell^{d-1}$.
\end{theorem}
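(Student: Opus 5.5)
We argue by contradiction. Suppose $G$ is a non-null $\{S_d,K_{\ell,\ell}\}$-free graph in which every vertex $v$ satisfies $\alpha(N[v]) \ge N := d^2\ell + 2d\ell^{d-1}$. We may assume $G$ is connected, since otherwise we pass to a component. The plan is to grow an induced $S_d$ or an induced $K_{\ell,\ell}$ out of large independent sets in neighbourhoods. The basic tool is the following observation about a vertex $v$ and an independent set $I\subseteq N(v)$. For $x\notin N[v]$, let $A_x = N(x)\cap I$.

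If $x$ has a neighbour $a\in I$ and also a non-neighbour $b\in I$, then $x\,a\,v\,b$ is an induced $P_4$. Similarly, if there are several vertices $x_1,\dots,x_d$ outside $N[v]$ with private neighbours $a_i\in I$, and these $x_i$ are pairwise nonadjacent and each misses the other $a_j$, then $v$, the $a_i$ and the $x_i$ form an induced $S_d$. So \textbf{$S_d$-freeness bounds how many second-neighbourhood vertices can have pairwise "distinct traces" on $I$.}

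\textbf{Step 1 (choice of $v$).} I would fix a vertex $v$ and a maximum independent set $I$ of $N(v)$, with $|I|\ge N-1$. I would also choose $v$ extremally, for instance with $|I|$ minimum over all vertices, or with $v$ the last vertex of a suitable ordering. Then every $a\in I$ also has a large independent set $J_a$ in its own neighbourhood.

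Since $I$ is a maximum independent set in $N(v)$, every vertex of $N(v)\setminus I$ has a neighbour in $I$. Hence $J_a$ consists of few vertices inside $N(v)$ together with many vertices of $N(a)\setminus N[v]$. More precisely, $J_a\cap N(v)$ is an independent set in $N(v)$. I expect to need that at least $N-\text{(something)}$ vertices of $J_a$ lie outside $N[v]$, which is where the minimality of $v$ must enter. An alternative is to replace $v$ by a vertex of $I$ when this fails.

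\textbf{Step 2 (extracting $S_d$ or $K_{\ell,\ell}$).} For $a\in I$, let $X_a = J_a\setminus N[v]$, an independent set of size about $N$.

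Consider first the traces $N(x)\cap I$ for $x\in X_a$. If some $x$ has at least $\ell$ neighbours in $I$ (counting $a$), then I would collect $\ell$ vertices of $X_a$ that share a common set of $\ell$ neighbours in $I$, which yields an induced $K_{\ell,\ell}$ since both sides are independent. This collection is done by a pigeonhole/Ramsey count. This count is where the $\ell^{d-1}$-type term arises: vertices of $X_a$ whose traces have fewer than $\ell$ elements are grouped by trace.

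Otherwise traces are small, and I would greedily select $d$ vertices $a_1,\dots,a_d\in I$ and $x_i\in X_{a_i}$ with $x_i$ nonadjacent to $a_j$ for $j\ne i$. The $d^2\ell$ term budgets for the elements excluded at each step: each chosen $x_i$ kills fewer than $\ell$ elements of $I$.

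The remaining requirement is that the $x_i$ be pairwise nonadjacent. Adjacency among $x$'s from different $X_{a_i}$ is controlled by $K_{\ell,\ell}$-freeness. If many vertices of $X_{a_1}$ were complete to many vertices of $X_{a_2}$, this would give a biclique, since both sets are independent. Thus, choosing each $x_i$ among $\ell$-many candidates avoiding previous ones, via a bipartite Kővári–Sós–Turán/pigeonhole argument, succeeds. The result is an induced $S_d$ centred at $v$, a contradiction.

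\textbf{Main obstacle.} The hardest step is Step 1: guaranteeing that the large independent sets around the vertices $a\in I$ genuinely leave $N[v]$, and do so with controllably small traces on $I$. I would first try taking $v$ minimizing $\alpha(N[v])$ and arguing that if $J_a$ largely stays in $N(v)\setminus I$, then swapping yields a larger independent set in $N(v)$ or a biclique between $I$ and $N(v)\setminus I$. Getting the precise constant $d^2\ell+2d\ell^{d-1}$ should then be bookkeeping in Step 2.
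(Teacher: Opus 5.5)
Your proposal has a genuine gap. Neither step is actually carried out, and the mechanisms you sketch for Step~2 would not work.

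In Step~1, minimality of $\alpha(N[v])$ only gives $|J_a|\ge |I|$. But $J_a\cap N(v)$ is an independent subset of $N(v)$, so it may itself have size $|I|$. Nothing rules out $J_a\subseteq N(v)$, so the sets $X_a$ could be empty.

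In Step~2, even granting large $X_a$, there are two problems.
\begin{itemize}
\item The vertices $x_i$ come from \emph{different} independent sets $X_{a_i}$, and $K_{\ell,\ell}$-freeness gives no control over adjacencies between them. It forbids large complete pieces, but an already chosen $x_1$ may be adjacent to every candidate in $X_{a_2}$. Under your standing hypothesis every vertex has a large independent neighbourhood, so there is no degree bound to fall back on either.
\item Your dichotomy for traces is incorrect. One vertex with at least $\ell$ neighbours in $I$ does not produce $\ell$ vertices with $\ell$ common neighbours. Many vertices with large but spread-out traces (as in a random-like bipartite graph) need not contain any $K_{\ell,\ell}$. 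A pigeonhole over $\ell$-subsets of $I$ would need vastly more vertices than you have. Controlling such high-degree vertices genuinely requires $dK_2$-freeness together with $K_{\ell,\ell}$-freeness.
\end{itemize}

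The missing idea is to make the outer layer independent for free.
\begin{itemize}
\item Take a maximum independent set $S$ of $G$, any $v\in S$, and a maximum independent set $J$ in $N[v]$; then $J\cap S=\emptyset$ whenever $|J|\ge 2$.
\item Let $H$ be the bipartite graph between $J$ and $S\setminus\{v\}$. Every independent set of $H$ is independent in $G$, so $\alpha(H)\le |S|$. Hence every vertex cover of $H$ has size at least $|J|-1$.
\item By K\"onig's theorem, $H$ has a matching covering some $J'\subseteq J$ with $|J'|\ge |J|-1$.
\item Since $S\setminus\{v\}$ is independent and anticomplete to $v$, an induced $dK_2$ between $J'$ and $S\setminus\{v\}$, together with $v$, is an induced $S_d$.
\end{itemize}
So you are reduced to a purely bipartite problem: bound one side of a $\{dK_2,K_{\ell,\ell}\}$-free bipartite graph that has a matching covering that side.

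The paper splits $J'$ by degree into $S\setminus\{v\}$ at threshold $\ell^{d-1}$.
\begin{itemize}
\item High-degree vertices number at most $\binom{d}{2}(\ell-1)$. This uses an iterative choice of $z_1,\dots,z_{d-1}$ that maintains private neighbourhoods of size $\ell^{d-1-j}$. The tool is that in a $K_{\ell,\ell}$-free bipartite graph with $|B|\ge p\ell$, at most $\ell-1$ vertices of $A$ have fewer than $p$ non-neighbours in $B$.
\item Low-degree vertices number at most $2(d-1)(\ell^{d-1}-1)$, by greedily extracting an induced matching from the given matching.
\end{itemize}
This argument also shows directly that every vertex of a maximum independent set works, with no contradiction hypothesis needed.
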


In particular, \zcref{thm:alpha-degeneracy-Sd-Kll-simplified} answers \zcref{question:H-alpha-degeneracy} whenever $H$ is an induced subgraph of $S_d$ for some $d \geq 2$.

We also consider a complementary direction in which $t$ is arbitrary but the forbidden biclique is fixed and unbalanced.
In particular, we show that for every $\ell \geq 2$, the $\alpha$-degeneracy of an $n$-vertex $\{P_t,K_{2,\ell}\}$-free graph is bounded by $ (t-1)\ell \cdot \log n$.
Actually, in the proof of this result we only use the fact that $P_t$-free graphs admit dominated balanced separators~\cite{MR3910075}. 
We say that a hereditary graph class $\mathcal{G}$ has \emph{$d$-dominated balanced separators} if every graph $G \in \mathcal{G}$ contains a set $X \subseteq V(G)$ of size at most $d$ such that every component of $G-N[X]$ has at most $\frac12 \abs{V(G)}$ vertices. 
We prove the following more general statement.

\begin{sloppypar}
\begin{restatable}{theorem}{thmdbs}
\label{thm:dbs}
Let $d \geq 2$ be an integer and let $\mathcal{G}$ be a class with $d$-dominated balanced separators. Then, for every integer $\ell \geq 2$, every $K_{2,\ell}$-free graph from $\mathcal{G}$ with $n \geq 2$ vertices has a vertex $v$ such that $\alpha(N[v]) \leq d\ell \cdot \log n$.
\end{restatable}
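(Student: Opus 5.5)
The plan is to use induction on $n$, with logarithm to base $2$, proving that every $K_{2,\ell}$-free $G\in\mathcal{G}$ on $n\ge 2$ vertices has a vertex $v$ with $\alpha(N[v])\le d\ell\log n$. Since $\mathcal{G}$ is hereditary, the hypothesis applies to every induced subgraph. The engine is the following consequence of $K_{2,\ell}$-freeness. If $u,w$ are nonadjacent, then $\alpha(N(u)\cap N(w))<\ell$: otherwise $u$, $w$ and an independent set of size $\ell$ in their common neighbourhood would form an induced $K_{2,\ell}$.

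For the inductive step, fix $G$ and take a set $X$ with $\abs{X}\le d$ as in the definition of $d$-dominated balanced separators. Suppose first that $G-N[X]$ is non-null, and let $C$ be any of its components, so $\abs{C}\le n/2$. For a vertex $v\in C$ we have $N[v]\subseteq C\cup N(X)$. Moreover $v$ is nonadjacent to every $x\in X$, so $N(v)\cap N(X)=\bigcup_{x\in X}(N(v)\cap N(x))$ has independence number less than $d\ell$, being a union of at most $d$ sets each of independence number less than $\ell$. We then split according to $\abs{C}$.
\begin{itemize}
\item If $\abs{C}\ge 2$, induction applied to $G[C]$ gives $v\in C$ with $\alpha(N[v]\cap C)\le d\ell\log(n/2)=d\ell\log n-d\ell$. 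Hence $\alpha(N_G[v])\le d\ell\log n-d\ell+(d\ell-1)<d\ell\log n$.
\item If $\abs{C}=1$, then $\alpha(N_G[v])\le 1+d(\ell-1)\le d\ell\le d\ell\log n$, since $n\ge 2$.
\end{itemize}
The base case $n=2$ is trivial, since $\alpha(N[v])\le 2\le d\ell$.

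The main obstacle is the degenerate case $N[X]=V(G)$, where no component exists to recurse into. The first thing I would try is to choose the separator cleverly or pass to a suitable induced subgraph. Concretely, remove a vertex $x_0\in X$ of maximum degree, or pick the smallest $X$. Then either some vertex outside $N[X]$ survives, and the argument above applies, or I work with a vertex $v\in N(X)\setminus X$. For such $v$, only the $x\in X$ adjacent to $v$ are problematic; every nonadjacent $x$ still contributes less than $\ell$ to $\alpha(N(v)\cap N(x))$.

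If this direct handling fails, I would strengthen the induction hypothesis. The strengthened version would apply the separator property to the induced subgraph $G-X$: by induction it has a vertex $v$ of small closed-neighbourhood independence number. I would then account for the re-added vertices of $X$ by charging them against the slack $d\ell$ obtained whenever the graph size halves. To make this work I would track a potential of the form $d\ell\log n$ minus the number of already-deleted separator vertices. I expect this bookkeeping in the fully dominated case to be the only delicate part; the halving case above is routine.
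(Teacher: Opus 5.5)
Your argument for the case where $G-N[X]$ is non-null is correct and matches the paper's low-degree case; your separate handling of $|C|=1$ is even a bit more careful. The gap is the case $N[X]=V(G)$. This is where the real content lies, and your proposal leaves it unresolved.

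None of your suggested fixes works as stated. The class only guarantees that \emph{some} admissible $X$ exists. If $G$ has a universal vertex $u$, then $X=\{u\}$ is admissible, and every admissible $X$ containing $u$ dominates everything. Taking $X$ minimal, or deleting a maximum-degree vertex of $X$, therefore gives you no vertex outside $N[X]$. For $v\in N(X)$ adjacent to $x\in X$ you have no bound on $\alpha(N(v)\cap N(x))$, and no smaller graph to recurse into. The potential-function idea of recursing into $G-X$ fails for a similar reason. $G-X$ has $n-|X|$ vertices, so there is no halving to pay for the additive loss of up to $d$. Nothing in your bookkeeping bounds how often the degenerate situation recurs before a halving step, so the accumulated losses need not be $O(d\ell\log n)$.

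The paper closes the gap with two ideas.

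\emph{Strip universal vertices.} Remove universal vertices iteratively. The removed set $D$ is a clique complete to the rest, so it adds at most $1$ to $\alpha(N[v])$ for every $v$. If at most one vertex remains, $G$ is complete and the claim is trivial.

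\emph{Split by maximum degree.} Let $G'$ be the remaining graph, with $n'\ge 2$ vertices.
\begin{itemize}
\item Suppose some $x$ has $\deg_{G'}(x)\ge n'/d-1$. Use $\{x\}$ alone as the separator. Then $G'-N[x]$ is non-null because $x$ is not universal, and its components have at most $\frac{d-1}{d}n'$ vertices. Recursing costs $d\ell\log\frac{d}{d-1}\ge \ell$, since $d\ln\frac{d}{d-1}\ge 1$. This absorbs the at most $\ell-1$ from $N(v)\cap N(x)$ plus the $1$ from $D$.
\item Suppose all degrees in $G'$ are below $n'/d-1$. Then every admissible $X$ for $G'$ satisfies $|N[X]|\le\sum_{x\in X}(\deg(x)+1)<n'$. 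So your halving argument applies. The extra $1$ from $D$ fits because in fact $\alpha(N(v)\cap N(X))\le d(\ell-1)$.
\end{itemize}

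The idea you are missing is this: a single high-degree vertex is itself a dominated separator. Its balance is only $\frac{d-1}{d}$, but its cost is only $\ell$ rather than $d\ell$, and that weaker balance still pays for it.
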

\end{sloppypar}

Since the class of $P_t$-free graphs admits $(t-1)$-dominated balanced separators~\cite{MR3910075}, \zcref{thm:dbs} immediately implies the mentioned statement for $\{P_t,K_{2,\ell}\}$-free graphs. However, we know more classes with $d$-dominated balanced separators, and \zcref{thm:dbs} applies to all of them.

Gartland and Lokshtanov conjectured (see~\cite{GartlandThesis}) that for every planar graph $H$, there exists an integer $d$ such that graphs excluding $H$ as an \textsl{induced minor} have $d$-dominated balanced separators.
This conjecture is confirmed for some restricted graphs $H$, including paths~\cite{MR0951359,MR3910075}, cycles~\cite{MR4708895}, and, more generally, wheels~\cite{chudnovsky2025}, as well as for some other cases (see~\cite{AhnGHK25,chudnovsky2025treeindependencenumberv,MR4730297}).

%\prz{I'm ok with removing what follows, at least for the conference version.}
%\TM{I think there is not need to remove it for the submission. Maybe we can state the final consequence in corollary environment? Maybe in that way it might feel more proper end of the results part.}
%\MM{I am not sure if it would add much to the paper if we put it in a corollary environment, since the class may appear quite restrictive}
%\TM{Yes, they are restrictive. I'll leave it up to your judgment. This was more a suggestion in an attempt to make this part feel better. Or we can motivate the case more at the beginning of the paragraph or also remove it for now :-D (getting back to Pawel's proposal).}
%\MM{Anything is fine with me}
For a positive integer $t$, we denote by $S_{t,t,t}$ the graph obtained from the complete bipartite graph $K_{1,3}$ by subdividing each edge $t-1$ times, and by $W_{t\times t}$  the $t$-by-$t$ hexagonal grid (also known as the $t \times t$-wall).
Chudnovsky, Codsi, Lokshtanov, Milanič, and Sivashankar~\cite{chudnovsky2025treeindependencenumberv} showed that for every positive integer $t$ there is an integer $c_t$ such that if $G$ is a $\{K_{t,t}, S_{t,t,t}\}$-free graph with $n\geq 2$ vertices that also excludes, as an induced subgraph, the line graphs of all subdivisions of $W_{t\times t}$, then the tree-independence number of $G$ is at most $c_t\log^4n$.
This implies that every such graph has a vertex $v\in V(G)$ such that $\alpha(N[v]) \leq c_t\log^4 n$.

They also showed that for every positive integer $t$, there exists an integer $d_t\geq 2$ such that the class of $S_{t,t,t}$-free graphs excluding the line graphs of all subdivisions of $W_{t\times t}$, is a class with $d_t$-dominated balanced separators~\cite{chudnovsky2025treeindependencenumberv}.
This result, combined with \zcref{thm:dbs}, shows that every $\{K_{2,\ell},S_{t,t,t}\}$-free graph with $n\geq 2$ vertices that also excludes, as an induced subgraph, the line graphs of all subdivisions of $W_{t\times t}$, has a vertex $v\in V(G)$ such that $\alpha(N[v]) \leq d_t\ell\log n$.

\section{Preliminaries}
\label{sec:prelim}

All graphs considered in this paper are finite, simple, and undirected.
A graph is said to be \emph{null} if it has no vertices.
Let $G$ be a graph.
% By $\comp(G)$ we denote the set of components of $G$.
For a vertex $v\in V(G)$, the \emph{(open) neighborhood} of $v$ in $G$ is the set $N_G(v)$ of vertices adjacent to $v$ in $G$; the set $N_G(v)\cup \{v\}$, denoted by $N_G[v]$, is the \emph{closed neighborhood} of $v$ in $G$.
For a set $X\subseteq V(G)$, we denote by $N_G[X]$ the set $\bigcup_{v\in X}N_G[v]$ and by $N_G(X)$ the set $N_G[X]\setminus X$.
The \emph{degree} of a vertex $v\in V(G)$ is denoted by $\deg_G(v)$ and defined as the cardinality of $N_G(v)$.
In all these cases, we omit the index $G$ whenever the graph is clear from the context.
For $X \subseteq V(G)$, by $G[X]$ we denote the subgraph of $G$ induced by the set $X$.
Since we mainly work with induced subgraphs, we often identify an induced subgraph (in particular, a component) with its vertex set, e.g., we write that $X$ is a connected set meaning that $G[X]$ is connected.

Given a graph $G$, we say that $X \subseteq V(G)$ \emph{is complete to} $Y \subseteq V(G)$ if every vertex in $X$ is adjacent to every vertex of $Y$.
A \emph{clique} in a graph $G$ is a set of pairwise adjacent vertices; an \emph{independent set} in $G$ is a set of pairwise nonadjacent vertices.
The \emph{independence number} of a graph $G$, denoted by $\alpha(G)$, is the maximum size of an independent set in $G$.
For a positive integer $t$, we denote by $K_t$ the complete graph with $t$ vertices.
A \emph{matching} in a graph $G$ is a set of pairwise disjoint edges.
An \emph{induced matching} in a graph $G$ is a matching~$M$ such that no edge of the graph joins two distinct edges in $M$. 
A graph $G$ is \emph{bipartite} if it admits a \emph{bipartition}, that is, a pair of disjoint independent sets with union $V(G)$.
For two positive integers $m$ and $n$, the \emph{complete bipartite graph} $K_{m,n}$ is a bipartite graph that admits a bipartition into parts of size $m$ and $n$, respectively, that are complete to each other. 
A \emph{vertex cover} in a graph $G$ is a set $S$ of vertices of $G$ such that $V(G)\setminus S$ is an independent set.
By K\"onig's theorem, in every bipartite graph, the maximum size of a matching equals the minimum size of a vertex cover.

Let $G$ be a graph and let $\mathcal{T} = (T,\{B_t\}_{t \in V(T)})$ such that $T$ is a tree and $\{B_t\}_{t \in V(T)}$ is a collection of \emph{bags}, that is, subsets of $V(G)$ indexed by the nodes of $T$.
For a vertex $v \in V(G)$, denote by $T(v)$ the subgraph of $T$ induced by the nodes $t$ of $T$ such that the bag $B_t$ contains $v$.
We say that $\mathcal{T}$ is a \emph{tree-decomposition} of $G$ if the following conditions are satisfied: (i) for each vertex $v\in V(G)$, the set $T(v)$ is nonempty and connected, and (ii) for edge $uv \in E(G)$ it holds that $T(u) \cap T(v) \neq \emptyset$.
The \emph{independence number} of a tree-decomposition $\mathcal{T} = (T,\{B_t\}_{t \in V(T)})$ of a graph $G$, denoted by $\alpha(\mathcal{T})$, is the maximum, over all nodes $t$ of $T$, of the independence number of the subgraph of $G$ induced by the bag~$B_t$.
The \emph{tree-independence number} of~$G$, denoted by $\tin(G)$, is the minimum independence number of a tree-decomposition of~$G$.

We recall the following well-known property of tree-decompositions (see, e.g., \cite[Lemma 2.6]{MR4664382}).

\begin{lemma}
    \label{lem:closed-neighborhoods}
    Let $\mathcal{T} = (T,\{B_t\}_{t \in V(T)})$ be a tree-decomposition of a graph~$G$.
    Then, there exists a node~$t$ of~$T$ and a vertex~$v$ of~$G$ such that~${N[v]\subseteq B_t}$.
\end{lemma}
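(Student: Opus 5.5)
The plan is to use the standard leaf-node argument: root the tree-decomposition and choose a bag that is extremal (a leaf of a suitably pruned tree), so that some vertex lives only in bags inside that part. Without loss of generality, assume $G$ is non-null and each tree $T(v)$ is a subtree of $T$.

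\textbf{Step 1: prune redundant leaves.} First reduce to a decomposition in which no bag is contained in a neighbouring bag. Whenever $B_t\subseteq B_{s}$ for an edge $ts$ of $T$, contract the edge $ts$ into a single node carrying bag $B_s$. One checks routinely that this again gives a tree-decomposition: subtrees $T(v)$ remain connected and edge-coverage is preserved. Moreover, every bag of the new decomposition is an original bag. So it suffices to prove the statement for this reduced decomposition.

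\textbf{Step 2: the case of a single node.} If $T$ has one node $t$, then $B_t=V(G)$ and any vertex $v$ works, since $N[v]\subseteq V(G)=B_t$.

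\textbf{Step 3: the general case.} Otherwise pick a leaf $t$ of $T$ with unique neighbour $s$. Since $B_t\not\subseteq B_s$, there is a vertex $v\in B_t\setminus B_s$. Because $T(v)$ is connected, contains $t$, and avoids $s$, which is the only neighbour of $t$, we get $T(v)=\{t\}$. Now take any $u\in N(v)$. The edge condition gives $T(u)\cap T(v)\neq\emptyset$, so $t\in T(u)$, that is, $u\in B_t$. Hence $N[v]\subseteq B_t$.

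The only step that needs any care is Step 1: we must verify that the contraction preserves the tree-decomposition axioms and terminates, and that bags of the reduced decomposition are bags of the original. Termination is clear because the number of nodes strictly decreases. An alternative that avoids contraction altogether is to choose a vertex $v$ and node $t$ minimising $|V(T(v))|$, or to root $T$ and take a deepest node $t$ that is the topmost node of some $T(v)$, then argue in the same way. The contraction route is the cleanest.
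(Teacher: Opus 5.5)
Your proof is correct. The paper does not prove this lemma itself; it quotes it as a well-known property of tree-decompositions (Lemma~2.6 of Dallard, Milani\v{c}, and \v{S}torgel), so there is no in-paper argument to compare against.

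Step~1 is sound. If $B_t\subseteq B_s$, contracting $ts$ either leaves $T(v)$ unchanged (when $v\notin B_s$, hence $v\notin B_t$) or replaces it by its image under the contraction. So nonemptiness, connectivity and edge coverage all survive. Since every bag of the reduced decomposition is an original bag, you do obtain a node of the original $T$, as the statement requires. Step~3 is the standard leaf argument and is complete.

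One small correction: non-nullness of $G$ is not a ``without loss of generality'' assumption but a genuine hypothesis, since for the null graph no vertex $v$ exists. This is harmless, because the paper applies the lemma only to a non-null induced subgraph $H$.

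As for the alternative you mention, here is how it runs. Root $T$, and choose $v$ whose topmost node $r_v$ of $T(v)$ is as deep as possible. For a neighbour $u$ of $v$, the subtree $T(u)$ meets $T(v)$, which lies below $r_v$, and its own topmost node is no deeper than $r_v$, so $r_v\in T(u)$.

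This works directly on the given decomposition and spares you checking that contraction preserves the axioms. Your contraction route, in exchange, gives a slightly stronger structural fact. In a reduced decomposition with at least two nodes, every leaf bag contains the closed neighbourhood of some vertex whose subtree is exactly that leaf.
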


Using this, the following lemma shows the relation between tree-independence number and $\alpha$-degeneracy. 

\begin{restatable}{lemma}{lemtreealphadeg}
    \label{lem:tree-alpha-deg}
    For every graph~$G$, the $\alpha$-degeneracy of~$G$ is at most its tree-independence number. 
\end{restatable}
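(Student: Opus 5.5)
The plan is to combine \zcref{lem:closed-neighborhoods} with the fact that tree-independence number does not increase under taking induced subgraphs. Let $k=\tin(G)$. By the definition of $\alpha$-degeneracy, it suffices to show that every non-null induced subgraph $H$ of $G$ contains a vertex $v$ with $\alpha(H[N_H[v]])\le k$.

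First I verify monotonicity. Fix a tree-decomposition $\mathcal{T}=(T,\{B_t\}_{t\in V(T)})$ of $G$ with $\alpha(\mathcal{T})=k$, and let $H=G[X]$ be non-null. I restrict every bag to $X$ and set $B'_t=B_t\cap X$. For each $v\in X$, the set of nodes whose bags contain $v$ is unchanged, so it is still nonempty and connected. Every edge of $H$ is an edge of $G$, so its endpoints still share a bag. Hence $\mathcal{T}'=(T,\{B'_t\})$ is a tree-decomposition of $H$. Moreover, $H[B'_t]=G[B_t\cap X]$ is an induced subgraph of $G[B_t]$, so $\alpha(H[B'_t])\le\alpha(G[B_t])\le k$.

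Next I apply \zcref{lem:closed-neighborhoods} to $\mathcal{T}'$ and $H$. This gives a node $t$ and a vertex $v\in V(H)$ with $N_H[v]\subseteq B'_t$. Then $H[N_H[v]]$ is an induced subgraph of $H[B'_t]$, and therefore $\alpha(H[N_H[v]])\le\alpha(H[B'_t])\le k$. Since $H$ was an arbitrary non-null induced subgraph, the $\alpha$-degeneracy of $G$ is at most $k$.

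There is no real obstacle here. The only point needing care is that \zcref{lem:closed-neighborhoods} must be applied to $H$ with its own neighborhoods $N_H[v]$, not $N_G[v]$, and this is exactly why the decomposition is first restricted to $X$. The lemma requires the graph to be non-null, and the definition of $\alpha$-degeneracy only quantifies over non-null induced subgraphs, so this hypothesis is always met.
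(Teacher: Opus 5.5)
Your proof is correct and follows the paper's argument: restrict an optimal tree-decomposition to $V(H)$, observe that it remains a tree-decomposition of $H$ with independence number at most $k$, and apply the closed-neighborhood lemma to $H$ to get a vertex $v$ with $N_H[v]$ inside a single bag. The only difference is that you write out the routine check that the restricted decomposition is valid, which the paper calls straightforward.
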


\begin{proof}
    Let~$G$ be a graph, let~$k$ be the tree-independence number of~$G$, and let ${\mathcal{T} = (T,\{B_t\}_{t \in V(T)})}$ be a tree-decomposition of~$G$ with independence number~$k$.
    Let~$H$ be a non-null induced subgraph of~$G$ and let~$\mathcal{T}_H$ be obtained from~$\mathcal{T}$ by restricting every bag to its intersection with~$V(H)$.
    It is straightforward to verify that~$\mathcal{T}_H$ is a tree-decomposition of~$H$ with independence number at most~$k$.
    By \zcref{lem:closed-neighborhoods}, there exists a node~$t$ of~$T$ and a vertex~$v$ of~$H$ such that ${N_H[v]\subseteq B_t\cap V(H)}$.
    Hence, ${\alpha(N_H[v]) \leq \alpha(H[B_t\cap V(H)]) \leq \alpha(\mathcal{T}_H) \leq k}$.
\end{proof}

\section{%
\texorpdfstring{Bounding $\alpha$-degeneracy}%
{Bounding alpha-degeneracy}}
\label{sec:degeneracy}

In this section we discuss $\alpha$-degeneracy and prove \zcref{thm:alpha-degeneracy-P5-Kll-simplified,thm:alpha-degeneracy-Sd-Kll-simplified,thm:dbs}.

\subsection{%
\texorpdfstring{$P_5$-free and $S_d$-free graphs}%
{P5-free and Sd-free graphs}}
\label{sec:P5_Sd_degeneracy}
Recall that for $d \geq 2$, by $S_d$ we denote the graph obtained from the $d$-leaf star $K_{1,d}$ by subdividing each edge once. 
Note that $S_2 = P_5$.

%We show the following two lemmas.

We start with a lemma that follows from Lemma 11 of Bonamy, Bousquet, Pilipczuk, Rzążewski, Thomassé, and Walczak~\cite{BonamyBPRTW2022}. 
We include a proof for completeness. 

\begin{lemma}
    \label{lem:noKll}
    Let ${p}$ and ${\ell}$ be positive integers and let $H$ be a bipartite $K_{\ell,\ell}$-free graph with bipartition $A,B$, where $\abs{B} \geq p \cdot \ell$.
    Then at most $\ell-1$ vertices from $A$ have fewer than~$p$ non-neighbors in~$B$. 
\end{lemma}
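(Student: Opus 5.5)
We argue by contradiction. Suppose there are $\ell$ distinct vertices $a_1,\dots,a_\ell \in A$, each with fewer than $p$ non-neighbors in $B$. We will find $\ell$ vertices of $B$ that are complete to $\{a_1,\dots,a_\ell\}$. Since $A$ and $B$ are independent sets, these $2\ell$ vertices then induce $K_{\ell,\ell}$, which contradicts the assumption on $H$.

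The key step is a union bound over non-neighborhoods. For each $i$, let $M_i \subseteq B$ be the set of non-neighbors of $a_i$ in $B$, so $\abs{M_i} \leq p-1$. The vertices of $B$ complete to $\{a_1,\dots,a_\ell\}$ are exactly those in $B \setminus \bigcup_{i=1}^{\ell} M_i$. Hence the number of such vertices is at least
\[
\abs{B} - \sum_{i=1}^{\ell}\abs{M_i} \;\geq\; p\ell - \ell(p-1) \;=\; \ell .
\]
Choosing any $\ell$ of these vertices, say $b_1,\dots,b_\ell$, gives the required induced $K_{\ell,\ell}$ on $\{a_1,\dots,a_\ell\}\cup\{b_1,\dots,b_\ell\}$.

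There is no real obstacle in this argument; it is a single counting step. The only point needing care is the arithmetic: the hypothesis $\abs{B} \geq p\ell$ is exactly what makes the count above reach $\ell$ rather than $\ell-1$.
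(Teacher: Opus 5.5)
Your proposal is correct and is essentially the paper's proof: assume $\ell$ vertices of $A$ each have at most $p-1$ non-neighbors in $B$, bound their common neighborhood in $B$ from below by $p\ell-\ell(p-1)=\ell$, and obtain a $K_{\ell,\ell}$. Your remark that $A$ and $B$ are independent, so this $K_{\ell,\ell}$ is induced, makes explicit a point the paper leaves implicit.
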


\begin{proof}
    Suppose that there is a set $A' \subseteq A$ of size $\ell$, such that each $a \in A'$ has at most $p-1$ non-neighbors in $B$.
    This means that the number of common neighbors of all vertices of $A'$ in $B$ is at least $\ell \cdot p - \ell(p-1) = \ell$. This yields a $K_{\ell,\ell}$ in $H$, a contradiction.
\end{proof}

The next lemma follows the idea of a step in the proof of Theorem 3 in~\cite{BonamyBPRTW2022}.

\begin{restatable}{lemma}{lembigdegree}
%\begin{lemma}
    \label{lem:bigdegree}
    Let ${d,\ell \geq 2}$ be integers and let $H$ be a $\{dK_2,K_{\ell,\ell}\}$-free bipartite graph with bipartition~${X,Y}$.
    Suppose that for every $x \in X$ it holds that $\deg(x) \geq \ell^{d-1}$.
    Then $\abs{X} \leq \binom{d}{2}(\ell-1)$.    
%\end{lemma}
\end{restatable}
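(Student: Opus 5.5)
The plan is to argue by induction on $d$; the bound is consistent with this because $\binom{d}{2}(\ell-1)=\binom{d-1}{2}(\ell-1)+(d-1)(\ell-1)$. So in the induction step I want to discard at most $(d-1)(\ell-1)$ vertices of $X$ and apply the $(d-1)$-case to what remains.

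\emph{Base case $d=2$.} A $2K_2$-free bipartite graph is a chain graph: the neighborhoods of the vertices of $X$ are totally ordered by inclusion. Indeed, if $N(x)\not\subseteq N(x')$ and $N(x')\not\subseteq N(x)$, then private neighbors $y,y'$ give an induced $2K_2$ on $\{x,y,x',y'\}$. Let $x_0$ be a vertex of $X$ of minimum degree. Then $N(x_0)$, which has size at least $\ell$, is contained in $N(x)$ for every $x\in X$. Hence any $\ell$ vertices of $X$ together with $\ell$ vertices of $N(x_0)$ induce a $K_{\ell,\ell}$. Therefore $\abs{X}\le \ell-1=\binom22(\ell-1)$.

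\emph{Induction step.} Fix $x_0\in X$ of minimum degree and set $B=N(x_0)$, so $\abs{B}\ge \ell^{d-1}=\ell^{d-2}\cdot\ell$. By \zcref{lem:noKll} with $p=\ell^{d-2}$, at most $\ell-1$ vertices of $X$ have fewer than $\ell^{d-2}$ non-neighbors in $B$; discard them. Let $X_1$ be the remaining vertices other than $x_0$. Take $x\in X_1$. Since $\deg(x)\ge\deg(x_0)$, we get $\abs{N(x)\setminus B}\ge\abs{B\setminus N(x)}\ge \ell^{d-2}$. So $H_1=H[X_1,\,Y\setminus B]$ satisfies the degree hypothesis for $d-1$. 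It is $K_{\ell,\ell}$-free, and I want it to be $(d-1)K_2$-free, so that induction gives $\abs{X_1}\le\binom{d-1}{2}(\ell-1)$.

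Suppose $H_1$ contains an induced matching $x_1y_1,\dots,x_{d-1}y_{d-1}$. Each $y_i\notin B$, so $y_i$ is non-adjacent to $x_0$. To extend the matching to an induced $dK_2$ in $H$, I need one $y\in B$ non-adjacent to all of $x_1,\dots,x_{d-1}$.

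This common-non-neighbor step is the main obstacle. Each $x_i$ individually has at least $\ell^{d-2}$ non-neighbors in $B$, but these sets need not intersect. My first attempt is to spend the remaining budget of $(d-2)(\ell-1)$ discarded vertices on a nested filtration inside $B$, applying \zcref{lem:noKll} repeatedly.

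Start with $B_0=B$. At stage $j$, let $B_j\subseteq B_{j-1}$ be the common non-neighborhood in $B$ of the vertices already committed to the matching. Use \zcref{lem:noKll} with $p=\ell^{d-2-j}$ to discard at most $\ell-1$ vertices of $X$ having fewer than $\ell^{d-2-j}$ non-neighbors in $B_j$. The exponents are arranged so that $\abs{B_j}\ge\ell^{d-1-j}$ throughout, and after $d-1$ stages a nonempty common non-neighborhood survives.

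To make this compatible with the induction, I would strengthen the inductive statement to carry along a ``reservoir'' set $B^\ast$ on the $Y$-side. The strengthened claim says: if every $x\in X$ has at least $\ell^{d-1}$ neighbors in $Y$, and the graph has no induced $dK_2$ whose $X$-vertices have a common non-neighbor in $B^\ast$, then the bound holds after discarding $\ell-1$ vertices per level. Choosing the matching in this greedy nested order (rather than applying induction as a black box) should make the total number of discarded vertices $\sum_{k=1}^{d-1}k(\ell-1)=\binom d2(\ell-1)$.
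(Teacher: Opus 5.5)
Your base case and first step are correct. The first step is in fact exactly step $j=1$ of the paper's proof: $x_0$ plays the role of $z_1$, $B=P(z_1,\{z_1\})$, the discarded set is $X^{z_1}$, and your inequality $\abs{N(x)\setminus B}\ge\abs{B\setminus N(x)}$ is the paper's minimality argument.

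\textbf{The gap.} It is the step you flag yourself. Induction cannot be applied to $H_1$ as a black box, and the proposed repair is neither stated precisely nor proved. As sketched, with a single reservoir $B^\ast$, it does not close. Run your step inside the strengthened statement. You commit $x_0$, and the old reservoir shrinks to $B^\ast\setminus N(x_0)$. But $x_0$ now also needs a private neighbor that survives all later commitments. So the vertices matched in the recursive call must have a common non-neighbor in $B^\ast\setminus N(x_0)$ \emph{and} a common non-neighbor in $N(x_0)\setminus B^\ast$. A hypothesis of the form ``no induced $dK_2$ whose $X$-vertices have a common non-neighbor in one set'' cannot express this. Taking the union of the two sets loses track of which committed vertex gets its private neighbor, and at depth $k$ there are $k$ such sets. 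This is also why your two budget descriptions disagree. You cannot discard only $\ell-1$ vertices per level; the total $\sum_k k(\ell-1)$ arises only by paying $\ell-1$ for each of the $k$ reservoirs alive at depth $k$.

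\textbf{The missing idea.} Carry one reservoir per committed vertex. The following goes through by induction on $d$, with the trivial base case $d=1$. Let $Y',R_1,\dots,R_k\subseteq Y$ be pairwise disjoint. Suppose every $x\in X$ has at least $\ell^{d-1}$ neighbors in $Y'$ and at least $\ell^{d-1}$ non-neighbors in each $R_i$. Suppose also there is no induced matching of size $d$ with $Y$-endpoints in $Y'$ whose $X$-endpoints have a common non-neighbor in every $R_i$. Then $\abs{X}\le\bigl(k(d-1)+\binom{d}{2}\bigr)(\ell-1)$.

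For the induction step:
\begin{enumerate}
\item Pick $z$ minimizing $\abs{N(z)\cap Y'}$.
\item Replace each $R_i$ by $R_i\setminus N(z)$ and add $R_{k+1}=N(z)\cap Y'$.
\item By \zcref{lem:noKll} with $p=\ell^{d-2}$, discard the at most $(k+1)(\ell-1)$ vertices, among them $z$, that have fewer than $\ell^{d-2}$ non-neighbors in some new reservoir.
\item Recurse with $Y'\setminus N(z)$, $d-1$, and $k+1$ reservoirs.
\end{enumerate}
Your degree inequality and your extension of the matching (match $z$ to the common non-neighbor in $R_{k+1}$) work verbatim. For $k=0$ this is the lemma.

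Written out, this is the paper's proof in recursive form. The paper's private neighborhoods $P(z,Z_j)$, $z\in Z_j$, are exactly these reservoirs, and $z_j$ minimizes $\abs{P(z_j,Z_{j-1})}$. Step $j$ removes the $j$ sets $X^z$, each of size at most $\ell-1$, for a total of $\sum_{j=1}^{d-1}j(\ell-1)=\binom{d}{2}(\ell-1)$.
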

\begin{proof}
    For each vertex~$v$ and set~${A \subseteq V(G)}$, let~$P(v,A)$ denote~$N(v) \setminus N(A \setminus \{v\})$, that is, the set of vertices that are adjacent to~$v$ but not adjacent to any~${a \in A \setminus \{v\}}$. 
    (Note that $v$ need not belong to $A$.)
    We call $P(v,A)$ the \emph{private neighborhood of~$v$ with respect to~$A$}.
    Suppose that $\abs{X} > \binom{d}{2}(\ell-1)$. 
    The goal is to construct an induced $dK_2$ in the graph, leading to a contradiction. 
    We therefore find $d$ vertices $z_1,\dots,z_{d-1},x \in X$ which have a private neighbor each. In the following, we define $Z_j=\{z_1,\ldots,z_{j}\}$ for~${j \in \{-1,0, \dots, d-1\}}$. Note that $Z_0=Z_{-1}=\emptyset$. We will define $z_j$ recursively. For that we keep track of a set $X_{j-1}$ of candidates for $z_{j}$. We set $X_{-1} \coloneqq  X_0 \coloneqq  X$ and recursively define $z_j$ and $X_j$ for $j \in \{0, 1, \dots, d-1\}$ such that,
    \begin{enumerate}
        [label=(\arabic*)]
        \item \label{it:sizeXj} $X_{j}$ is of size more than $\frac{ d(d-1)-j(j+1)}{2} (\ell-1)$ with~$X_{j} \subseteq X_{j-1} \setminus Z_{j-1}$, 
        \item \label{it:zjinXj} if~${j > 0}$, then $z_{j} \in X_{j-1}$, and
	\item \label{it:star} for every~${x \in X_j}$ and every~${z \in Z_j \cup \{x\}}$ it holds that 
	\[\abs{ P(z, Z_j \cup \{x\}) } \geq \ell^{d-1-j}. \]
    \end{enumerate}
    
    Note that proving this claim is enough to reach a contradiction. 
    Indeed, for ${j=d-1}$, by selecting an arbitrary vertex~$x \in X_{d-1}$ (which is a nonempty set by condition \ref{it:sizeXj}) and any~${z \in Z_{d-1} \cup \{x\}}$ it holds 
    that $\abs{P(z,Z_{d-1} \cup \{x\})} \geq \ell^0 = 1$. 
    Thus, choosing for each $z \in Z_{d-1} \cup \{x\}$ any vertex in $P(z,Z_{d-1} \cup \{x\})$, we obtain 
    an induced $dK_2$ in $H$, a contradiction.

    Note that for~$j = 0$, all the conditions are satisfied since for each~${x \in X_0 = X}$, we have~$\abs{P(x,\{x\})} = \abs{N(x)} \geq \ell^{d-1}$, and $X_{0}= X\subseteq X_{-1} \setminus Z_{-1}= X$.
    So suppose inductively that $1 \leq j \leq d-1$ and we have already selected sets $Z_{j-1}$ and $X_{j-1}$ so that these conditions are satisfied. 
    Since~${\frac{ d(d-1)-(j-1)j}{2} (\ell-1)\geq 0}$, the set $X_{j-1}$ is nonempty. 
    Let $z_j$ be a vertex in $X_{j-1}$ with the minimum number of neighbors in the set $Y\setminus N(Z_{j-1})$; in other words, 
    \[
        \abs{P(z_j,Z_{j-1})} \leq \abs{P(x,Z_{j-1})}
    \]
    for all~${x \in X_{j-1}}$.
    In particular, condition \ref{it:zjinXj} is satisfied for~$j$.

    We next define $X_j$. To obtain $X_j$ from $X_{j-1}$ we need to discard every vertex from $X_{j-1}$ which violates property $(3)$ for $Z_j$. We make this more precise in the following.
    Let~${p = \ell^{d-1-j}}$. For $z\in Z_j$, let $X^z$ be the set of vertices in $X_{j-1}$ with fewer than $p$ non-neighbors in $P(z,Z_j)$ (that is, for $x\in X_z$ the inequality in \ref{it:star} does not hold with $z$ and $Z_j$). We set ${X_j := X_{j-1} \setminus \bigcup_{z \in Z_j} X^z}$. 
    
    We argue that \ref{it:sizeXj} and \ref{it:star}  hold for $X_j$.
    By property~\ref{it:star}, for~${z_j \in X_{j-1}}$ and ${z \in Z_{j-1} \cup \{z_j\} = Z_j}$, we have ${\abs{P(z,Z_j)} \geq \ell^{d-1-j+1} = p \cdot \ell}$. 
    By \zcref{lem:noKll}, we obtain that ${\abs{X^z} \leq \ell-1}$ for each $z\in Z_j$. Hence observe that $\abs{\bigcup_{z \in Z_j} X^z} \leq j (\ell-1)$. 
    We thus have 
    \begin{align*}
        \abs{X_j} 
        &\geq \abs{X_{j-1}} - j(\ell-1) > \frac{ d(d-1)-(j-1)j}{2} (\ell-1) - j \cdot (\ell-1)\\
        &= \frac{ d(d-1)-j(j+1)}{2} (\ell-1), 
    \end{align*}
    as required.
    Furthermore, since $z_j$ has no non-neighbors in $P(z_j,Z_j)$, it holds that $z_j \in X^{z_j}$, so $z_j \notin X_j$. 
    This implies that $X_j\subseteq X_{j-1} \setminus \{z_j\}$, so condition \ref{it:sizeXj} is satisfied for~$j$. 
    
    Now let us argue that condition \ref{it:star} holds for~$X_j$ and~$Z_j$. 
    Pick any~${x \in X_j}$ and first consider~${z \in Z_j}$. 
    The definition of $X_j$ implies that~${x \notin X^z}$; hence, $x$ has at least~${p = \ell^{d-1-j}}$ non-neighbors in $P(z,Z_j)$. 
    Thus, ${\abs{P(z,Z_j \cup \{x\})} \geq \ell^{d-1-j}}$, as claimed. 
    
    Now consider~${z = x}$. 
    Recall that ${\abs{P(z_j,Z_{j-1})} \leq \abs{P(x,Z_{j-1})}}$ by the choice of~$z_j$. 
    Thus, we obtain
    \begin{align*}
        \abs{P(x,Z_j\cup \{x\})} 
        &= \abs{P(x,Z_j)}\\
        &= \abs{P(x,Z_{j-1})} - \abs{(N(x) \cap N(z_j)) \setminus N(Z_{j-1})} \\
        &\geq \abs{P(z_j,Z_{j-1})} - \abs{(N(x) \cap N(z_j)) \setminus N(Z_{j-1})} \\
        &= \abs{P(z_j,Z_{j-1}\cup\{x\})} \\
        &= \abs{P(z_j,Z_{j}\cup\{x\})}\\ 
        &\geq \ell^{d-1-j},
    \end{align*}
    where the last inequality holds as argued in the previous paragraph (with~$z_j$ in place of~$z$). 
    This completes the proof of the claim and yields a contradiction.
    Thus, ${\abs{X} \leq \binom{d}{2}(\ell-1)}$.
\end{proof}

We will also need the following lemma. 

\begin{restatable}{lemma}{lemsmalldegree}
%\begin{lemma}
\label{lem:smalldegree}
    Let $q,d$ be positive integers and let~$H$ be a bipartite graph with bipartition $X,Y$ that contains a matching~$M$ covering~$X$ and for each~${x \in X}$ it holds that ${\deg(x) \leq q}$.
    If ${\abs{X} > 2(d-1)q}$, then~$H$ contains an induced matching with~$d$ edges.
%\end{lemma}
\end{restatable}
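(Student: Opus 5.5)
We argue greedily, building the induced matching one edge at a time from the edges of $M$. For each $x \in X$ let $m(x) \in Y$ denote its partner in $M$. The idea is to select vertices $x_1, x_2, \dots$ of $X$ one by one and take the edges $x_i m(x_i)$. After each selection we discard every remaining vertex of $X$ that could create a chord with the chosen edge. For $x_i m(x_i)$ and $x_j m(x_j)$ to form an induced $2K_2$, we need $x_i \not\sim m(x_j)$ and $x_j \not\sim m(x_i)$; edges inside $X$ or inside $Y$ do not exist because $H$ is bipartite.

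The obstacle is that the degree bound $\deg(x)\le q$ controls only the first type of conflict. A fixed $x_i$ is adjacent to at most $q-1$ vertices $m(x)$ with $x \neq x_i$. However, a fixed $m(x_i)$ may have unboundedly many neighbors in $X$, so the second type of conflict is not bounded directly. To handle this, we use a counting and ordering argument.

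Consider the auxiliary digraph $D$ on $X$ with an arc $x \to x'$ whenever $x \neq x'$ and $x \sim m(x')$. Every vertex has out-degree at most $q-1$, so $D$ has at most $|X|(q-1)$ arcs. Hence its underlying undirected graph $U$ (which may contain the edge $xx'$ twice) has average degree less than $2q$. Every subgraph of $U$ has the same property, so $U$ is $(2q-1)$-degenerate and therefore $2q$-colorable. Consequently $U$ has an independent set of size at least $|X|/(2q)$. Alternatively, take a vertex of minimum total degree at most $2q-2$, delete it together with its at most $2q-2$ neighbors, and repeat; each step removes at most $2q-1$ vertices.

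Now take an independent set $I$ in $U$ with $|I| \ge \lceil |X|/(2q-1)\rceil$. Since $|X| > 2(d-1)q \ge (d-1)(2q-1)$, we get $|I| \ge d$. For any distinct $x, x' \in I$ there are no arcs between them in $D$, so $\{x m(x) : x \in I\}$ is an induced matching, and any $d$ of these edges give the required induced matching with $d$ edges.

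The main point to verify is the degeneracy count. In every induced subgraph of $U$, the number of arcs is at most (number of vertices)$\cdot(q-1)$, because out-degrees in induced subdigraphs can only decrease. So some vertex has total degree at most $2(q-1)$, and the greedy procedure removes at most $2q-1$ vertices per chosen vertex. This gives strictly more than $d-1$ choices when $|X| > 2(d-1)q$, with some slack.
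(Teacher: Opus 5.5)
Your proof is correct and is essentially the paper's argument, rephrased as a conflict graph. The paper (by induction on $d$) picks $y$ in the $M$-covered part of $Y$ with $\deg(y)\le q$, which is exactly a vertex $x=M(y)$ of in-degree at most $q-1$ in your digraph $D$; it then deletes $N(y)$ together with the $M$-partners of $N(x)$, which is deleting the closed neighbourhood of $x$ in $U$, so each round removes at most $2q-1$ vertices of $X$, just as in your greedy step (your degeneracy packaging is slightly cleaner and makes the sharper threshold $|X|>(d-1)(2q-1)$ visible, but the counting is the same).
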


\begin{proof}
	The proof is by induction on $d$.
	If $d=1$, then $\abs{X} > 0 $ and for an arbitrary vertex $x\in X$, the edge of $M$ having $x$ as an endpoint forms an induced matching.
	Suppose that $d \geq 2$ and the lemma holds for $d-1$.

	Let $Y'$ be the subset of $Y$ consisting of the vertices covered by $M$, and let $H' = H[X \cup Y']$.
	Clearly $\abs{Y'} = \abs{X}$ and $M$ is a perfect matching in $H'$.	
	For a vertex $v \in X \cup Y'$, let $M(v)$ be the vertex matched with $v$ by $M$.	
	As $\abs{E(H')} \leq q \cdot \abs{X} = q \cdot \abs{Y'}$, there is a $y \in Y'$ with $\deg(y) \leq q$.
	Let $x = M(y)$ and define
	\begin{align*}
		\widehat{Y} = & Y' \setminus N(x),\\
		\widehat{X} = & X \setminus N(y) \setminus \{ M(y') ~|~ y' \in N(x) \cap Y'\},\\
		\widehat{H} = & H'[\widehat{X} \cup \widehat{Y}].
	\end{align*}
	Notice that $\abs{\widehat{X}} \geq \abs{X} - 2q \geq 2(d-2)q$. Furthermore, there is a matching covering $\widehat{X}$ in $\widehat{H}$, 
	consisting of edges of $M$ with both endpoints in $\widehat{X} \cup \widehat{Y}$.
	Thus, by the induction hypothesis, $\widehat{H}$ has an induced matching with $d-1$ edges.
	Together with the edge $xy$ this gives an induced matching of $d$ edges in $H$.
\end{proof}

Now we show the following result, which in particular implies \zcref{thm:alpha-degeneracy-P5-Kll-simplified,thm:alpha-degeneracy-Sd-Kll-simplified}.

\begin{theorem}\label{thm:alpha-degeneracy-Sd-Kll}
    Let ${d,\ell \geq 2}$ be integers, let~$G$ be an $\{S_d, K_{\ell,\ell}\}$-free graph, and let~$I$ be a maximum independent set in~$G$.
    Then, ${\alpha(N[v]) < d^2 \ell + 2d\ell^{d-1}}$ for every ${v \in I}$. 
    Furthermore, for $d=2$, the bound is ${\alpha(N[v]) < 2 \ell}$.
\end{theorem}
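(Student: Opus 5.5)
We fix $v \in I$ and a maximum independent set $J$ of $G[N[v]]$, and we aim to show that $\abs{J}$ is small. We may assume $v \notin J$: otherwise $J=\{v\}$, since $v$ is adjacent to every other vertex of $N[v]$. So $J \subseteq N(v)$. The key object is the bipartite graph $H$ with parts $X \coloneqq J \setminus I$ and $Y \coloneqq I \setminus J$, together with the edges of $G$ between them. Note that $J \cap I \subseteq N(v) \cap I = \emptyset$, since $I$ is independent and contains $v$; hence $X = J$.

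\textbf{Step 1: a matching in $H$.} Maximality of $I$ supplies a matching covering $X$. Indeed, $(I \setminus N(S)) \cup S$ is independent for every $S \subseteq X$, so $\abs{N_H(S)} \geq \abs{S}$, and Hall's theorem applies. Moreover $v \in Y$ is adjacent to all of $X$, and every neighbor of $x \in X$ in $I$ lies in $Y$.

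\textbf{Step 2: forbidden structure.} If $H - v$ contained an induced matching $x_1y_1,\dots,x_dy_d$ with $x_i \in X$ and $y_i \in Y\setminus\{v\}$, then $v$ together with this matching would form an induced $S_d$ in $G$. The reasons are that $v$ is adjacent to every $x_i$, that $v y_i \notin E$ because $I$ is independent, that the $x_i$ are pairwise nonadjacent because $J$ is independent, and that the $y_i$ are pairwise nonadjacent because $I$ is independent. So $H-v$ is $dK_2$-free on the relevant part. It is also $K_{\ell,\ell}$-free as an induced subgraph of $G$, since $X$ and $Y$ are independent.

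\textbf{Step 3: splitting by degree.} Split $X$ into $X_{\mathrm{big}}$, the vertices of degree at least $\ell^{d-1}$ in $H-v$, and $X_{\mathrm{small}}$, the rest.
\begin{itemize}
\item \zcref{lem:bigdegree}, applied to $H-v$ restricted to $X_{\mathrm{big}} \cup (Y\setminus\{v\})$, gives $\abs{X_{\mathrm{big}}} \leq \binom d2(\ell-1)$.
\item For $X_{\mathrm{small}}$, the Hall matching uses $v$ for at most one vertex. Removing that vertex leaves a matching covering $X_{\mathrm{small}}$ minus at most one vertex, with degrees at most $q=\ell^{d-1}-1$. \zcref{lem:smalldegree} then forces $\abs{X_{\mathrm{small}}} - 1 \leq 2(d-1)q$.
\end{itemize}
Summing, $\abs{X} \leq \binom d2(\ell-1) + 2(d-1)(\ell^{d-1}-1) + 1$, which is comfortably below $d^2\ell + 2d\ell^{d-1}$. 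The main bookkeeping obstacle is handling the vertex $v$ in the matching and in the degrees; one can also simply use degrees in $H$ and absorb the extra $+1$ into the threshold.

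\textbf{Step 4: the case $d=2$.} Here a sharper argument is needed. Since $P_5=S_2$, no two vertices of $X$ can have "private" neighbors in $Y\setminus\{v\}$ forming an induced $2K_2$. So the neighborhoods in $Y' \coloneqq Y\setminus\{v\}$ of vertices of $X$ form a chain under inclusion. Order $X=\{x_1,\dots,x_k\}$ with $N_{Y'}(x_1)\subseteq\dots\subseteq N_{Y'}(x_k)$. Hall's condition applied to $S=\{x_1,\dots,x_i\}$ gives $\abs{N_{Y'}(x_i)} \geq i-1$, because $\abs{N_H(S)} \geq i$ and $v$ accounts for one neighbor. Then $x_\ell,\dots,x_{2\ell-1}$ all contain $N_{Y'}(x_\ell)$, which has size at least $\ell-1$. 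Adding $v$, which is complete to $X$, gives $\ell$ common neighbors, so $\{x_\ell,\dots,x_{2\ell-1}\}$ and $N_{Y'}(x_\ell)\cup\{v\}$ contain an induced $K_{\ell,\ell}$ whenever $k \geq 2\ell-1$. Hence $\abs{J} \leq 2\ell-2 < 2\ell$. Checking these index offsets carefully is where I expect the only real fiddliness to lie.
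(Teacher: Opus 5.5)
Your proof is correct and follows essentially the same route as the paper. Both use a matching from $J$ into $I$ forced by the maximality of $I$ (you apply Hall's theorem with $v$ included and then discard the edge at $v$, whereas the paper applies K\"onig's theorem to $J\cup(I\setminus\{v\})$), the observation that an induced $dK_2$ together with $v$ gives an induced $S_d$, and the degree split handled by \zcref{lem:bigdegree} and \zcref{lem:smalldegree}. For $d=2$ you use $v$ itself as an extra common neighbor instead of the matched partners $M(x_1),\dots,M(x_\ell)$; this is valid and even gives the marginally sharper bound $\alpha(N[v])\le 2\ell-2$.
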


\begin{proof}
    Let $G$ and $I$ be as in the statement of the theorem, and consider any $v \in I$.
    Let $J$ be a largest independent set in $N[v]$. Clearly $J \cap I = \emptyset$, as $v \in I$ and $v$ is adjacent to every vertex from $J$.
    
    Let $I' = I \setminus \{v\}$ and consider the graph $H$ induced by $J \cup I'$; note that it is bipartite with bipartition $J, I'$.
    Observe that $\alpha(H) \leq \abs{I'}+1$, as otherwise $I$ is not a maximum independent set in $G$.
    Thus the size of a minimum vertex cover in $H$ is at least $\abs{J}-1$. Consequently, by K\"onig's theorem, $H$ has a matching $M$ of size at least $\abs{J}-1$.
    Let $J'$ be the set of vertices from $J$ covered by $M$. Clearly $\abs{J'} \geq \abs{J}-1$.
    
    Let $H'$ be the subgraph of $H$ induced by $J' \cup I'$. 
    It is bipartite and $M$ is a matching in $H'$ covering $J'$.
    Clearly $H'$ is $K_{\ell,\ell}$-free, as it is an induced subgraph of $G$.
    Furthermore, $H'$ is $dK_2$-free. Indeed, an induced matching with $d$ edges in $H'$, together with the vertex $v$, would induce a copy of $S_d$ in $G$.
    We aim to bound the size of $J'$. 
    In general this is an easy application of Ramsey's theorem, but since we want to optimize the dependence on $\ell$, we need a more careful argument. 
    
    \subparagraph*{The general case.}
    Let $J'_1$ be the set of vertices in $J'$ whose degree in $H'$ is at least $\ell^{d-1}$, and $J'_2 = J \setminus J'_1$.
    By \zcref{lem:bigdegree} applied to the graph $H'[J'_1 \cup I']$ we observe that $\abs{J'_1} \leq \binom{d}{2} (\ell-1) < d^2 \ell$.
    On the other hand, as each vertex of $J'_2$ has degree at most $\ell^{d-1}-1$ in $H'$, by \zcref{lem:smalldegree} applied to the graph $H'[J'_2 \cup I']$ we observe that $\abs{J'_2} \leq 2(d-1)(\ell^{d-1}-1) \leq 2d \ell^{d-1}$.
    Summing up, we obtain that $\abs{J'} < d^2 \ell + 2d \ell^{d-1}$ and thus $\abs{J} \leq d^2 \ell + 2d \ell^{d-1}$.
    
    \subparagraph*{Better bound for $d=2$.}
    The above bound (if carefully inspected) only gives $\abs{J} < 3\ell-1$ for $d=2$.
    Here we improve it to $2\ell$.
    For a vertex $x \in J'$, let $M(x)$ be the vertex matched with $x$ by $M$.
    Towards contradiction suppose that $\abs{J} \geq 2\ell$ and thus $\abs{J'} \geq 2\ell-1$.
    As $H'$ is $2K_2$-free, there is an ordering $x_1,x_2,\ldots,x_{\abs{J'}}$ of $J'$ such that
    \[N_{H'}(x_1) \subseteq N_{H'}(x_2) \subseteq \ldots \subseteq N_{H'}(x_{\abs{J'}})\,.\]
    In particular, all vertices $x_i$ for $i \geq \ell$ are adjacent to $M(x_1),\ldots,M(x_\ell)$.
    Thus, vertices \[\{x_\ell,x_{\ell+1},\ldots,x_{2\ell-1} \} \cup \{M(x_1),\ldots,M(x_\ell) \}\] form an induced $K_{\ell,\ell}$ in $G$, a contradiction.
\end{proof}

\subsection{Classes with dominated balanced separators}

In what follows $\log$ denotes the logarithm with base 2.

\thmdbs*

\begin{proof}
    First, for every positive $r \in \mathbb{R}$ we have $r \geq 1 +\ln r$. 
    By letting $r=\frac{d-1}{d}$ we obtain
    \[\frac{d-1}{d} \geq 1 +\ln \frac{d-1}{d}\,,\]
    thus 
    \[d-1 \geq d + d\ln \frac{d-1}{d}\,.\]
    Since for $r \leq 1$ we have $\log r \leq \ln r$, from there we observe that
    \begin{equation}
        d \log \frac{d-1}{d} +1 \leq d \ln \frac{d-1}{d} +1 \leq 0. \label{eq:estimatelogd}
    \end{equation}
    
    The proof of the theorem is by induction on the number of vertices. If $n=2$, then any vertex satisfies the claim.
    So suppose that $n >2$ and the result hold for all graphs with fewer vertices.

    First, we iteratively remove universal vertices from $G$. More formally, we start with $D = \emptyset$.
    If the current graph contains a vertex adjacent to every other vertex, then we put $v$ into $D$ and remove it from the graph. We repeat this step exhaustively. 
    Let $G'$ denote the final graph obtained that way.
    Note that $D$ is a clique in $G$ and every vertex in $D$ is adjacent to every vertex of $V(G)\setminus D$.

    If $\abs{V(G')} \leq 1$, then $G$ is a complete graph; hence, for every $v \in V(G)$ it holds that $\alpha(N[v])=1$ and we are done.
    Thus suppose that $n' \coloneqq  \abs{V(G')} \geq  2$. Note that $G' \in \mathcal{G}$ as $\mathcal{G}$ is hereditary.
    Furthermore $G'$ is $K_{2,\ell}$-free and has no universal vertices.
    We consider two cases.

    \noindent\textbf{Case 1. $G'$ has a vertex $x$ of degree at least $\frac{1}{d} n'-1$.}
        Note that every component of $G' - N[x]$ has at most $\frac{d-1}{d}n'$ vertices.
        Furthermore, there is at least one such component $C$, as $x$ is not universal.

        We apply induction to $C$, obtaining a vertex $v$ such that
        \begin{align*}
        \alpha(N[v] \cap C) &\leq d\ell \log(\abs{C}) \leq d \ell \log\left (\frac{d-1}{d} n' \right) \\
                            &= d\ell \bigg( \log \frac{d-1}{d} + \log n'\bigg) =  d\ell \log \frac{d-1}{d} + d\ell \log n'.
        \end{align*}
        On the other hand, we observe that that $\alpha(N(v) \cap N(x)) \leq \ell-1$, as $G$ is $K_{2,\ell}$-free. Finally, $\alpha(N(v) \cap D) \leq 1$, as $D$ induces a clique.
        Summing up, we obtain
        \begin{align*}
        \alpha(N[v]) &\leq \alpha(N[v] \cap C) + \alpha(N(v) \cap N(x))+ \alpha(N(v) \cap D) \\
                       &\leq d\ell \log \frac{d-1}{d} + d\ell \log n' + \ell-1 +1 \\
                       &\leq d\ell \log n' + \ell\left( d \log \frac{d-1}{d} + 1 \right) \\
                       & \leq d\ell \log n' \leq d\ell \log n,
        \end{align*}
        as required, where the penultimate inequality follows from~\eqref{eq:estimatelogd}. 

    \noindent\textbf{Case 2. $G'$ has maximum degree smaller than $\frac{1}{d} n' -1$.}
        Let $X \subseteq V(G')$ be a set of size at most $d$ such that every component of $G' - N[X]$ has at most $\frac{1}{2}n'$ vertices; such a set exists as $\mathcal{G}$ has $d$-dominated balanced separators.
        Furthermore, there is at least one component $C$ in $G' - N[X]$, as 
        ${\abs{N_{G'}[X]} \leq d\cdot (\frac{1}{d} n'-1)<n'}$.        
        Again, we apply induction to $C$, obtaining a vertex $v$ such that
        \begin{align*}
        \alpha(N[v] \cap C) \leq d\ell \log(\abs{C}) \leq d \ell \log(n'/2) =  d\ell \log n' - d\ell.
        \end{align*}
          On the other hand, we observe that $\alpha(N(v) \cap N(X)) \leq d(\ell-1)$.
          Indeed, suppose that there is an independent set $I \subseteq N(v) \cap N(X)$ of size at least $d(\ell-1)+1$.
          As $\abs{X} \leq d$, there is $x \in X$ such that $\alpha(N(v) \cap N(x)) \geq \ell$,
          thus $G$ contains an induced $K_{2,\ell}$, a contradiction.
          Summing up, we obtain
         \begin{align*}
         \alpha(N[v]) &\leq \alpha(N[v] \cap C) + \alpha(N(v) \cap N(X)) + \alpha(N(v) \cap D) \\
                        &\leq d\ell \log n' - d\ell + d(\ell-1) +1 \\
                        &\leq d\ell \log n' \leq d\ell \log n,
         \end{align*}
        as required. This completes the proof.
\end{proof}

\section{%
\texorpdfstring{Bounded tree-independence number for $\{P_5, K_{\ell,\ell}\}$-free graphs}%
{Bounded tree-independence number for {P5, Kll}-free graphs}}

In this section, we sketch the proof of \zcref{thm:P5treealpha}. 

\thmtreealpha*

The proof of the theorem is based on the following technical lemma.

\begin{restatable}{lemma}{bagwithNr}\label{lem:bagwithNr}
    Let~$G$ be a $\{P_5, K_{\ell,\ell}\}$-free graph with at least two vertices, let $r \in V(G)$ be a vertex with $\alpha(N[r]) < 2\ell$, and let $\mathcal{T} = (T,\{B_t\}_{t\in V(T)})$ be a tree-decomposition of $G - r$
    such that: 
    \begin{enumerate}
        [label=(P\arabic*)]
        \item\label{it:induction_independence} The independence number of each bag is at most $4\ell$.
        \item\label{it:induction_maximal_dist} Among tree-decompositions satisfying \ref{it:induction_independence}, the number of vertex pairs $v,w \in N(r)$ with $v\neq w$ for which there exists $t \in V(T)$ such that $v,w\in B_t$, is maximized.
    \end{enumerate}
    Then, each pair of vertices in $N(r)$ belongs to a common bag.
\end{restatable}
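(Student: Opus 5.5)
We argue by contradiction. Suppose two distinct vertices $u,w\in N(r)$ share no bag of $\mathcal{T}$. Then $T(u)$ and $T(w)$ are disjoint subtrees, so some edge $st$ of $T$ separates them. Hence $S:=B_s\cap B_t$ is a separator in $G-r$ between $u$ and $w$, and the bags on the two sides of $st$ cover the sides of the separation. The plan is to modify $\mathcal{T}$ locally so that \ref{it:induction_independence} still holds but strictly more pairs of $N(r)$ share a bag, contradicting \ref{it:induction_maximal_dist}.

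The natural modification is to add $N(r)$, or a suitable part of it, to bags. Concretely, let $Z\subseteq N(r)$ be the set of neighbours of $r$ that we want to push across. For a chosen set $W$, replace each bag $B_t$ by $B_t\cup W$ along a path or subtree of $T$ connecting $T(u)$ and $T(w)$. Adding the same vertex set to a connected subtree of bags preserves the tree-decomposition axioms. It keeps every previously co-occurring pair together, and it makes $u,w$ share a bag. The cost is that the independence number of an affected bag rises by at most $\alpha(W)\le\alpha(N[r])<2\ell$. We therefore cannot add $W$ to arbitrary bags: we may only add it to bags $B$ with $\alpha(B\setminus N(W))$ or $\alpha(B)$ suitably below $4\ell$.

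The key structural step uses $P_5$-freeness and $K_{\ell,\ell}$-freeness to control bags near the separation. First, consider any vertex $x$ in a bag between $T(u)$ and $T(w)$ that is nonadjacent to $r$ and lies in the component structure separating $u$ from $w$. Paths from $u$ to $w$ through $G-r$, together with $r$, have limited length because of $P_5$-freeness. This should force every vertex on a $u$–$w$ connection to be adjacent to $u$ or $w$, or else to be dominated by $N(r)$. Using this, I would try to bound $\alpha$ of each bag on the path between $T(u)$ and $T(w)$ by roughly $2\ell$ after deleting the part already in $N(r)$. The argument would mirror the proof of \zcref{thm:alpha-degeneracy-Sd-Kll}: take an independent set in a bag outside $N(r)$ and the independent set $J\subseteq N(r)$, and find a large matching by König's theorem. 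Then $2K_2$-freeness of the bipartite graph between them, inherited from $P_5$ through $r$, gives nested neighbourhoods, and with $K_{\ell,\ell}$-freeness this bounds the relevant sizes by $2\ell$. The budget $4\ell = 2\ell+2\ell$ is exactly what this calls for.

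Alternatively, and perhaps more cleanly, we can choose the edge $st$ and replace only the bags on one side. We replace each bag $B$ there by $(B\setminus X)\cup W$, where $X$ consists of vertices that are only needed because of their adjacency to $r$. The aim is to keep independence at most $4\ell$ while gaining the pair $\{u,w\}$ without losing any pair. The main obstacle is verifying that this modified decomposition satisfies \ref{it:induction_independence}: we must show that every bag receiving the new vertices has, apart from those vertices, independence number at most $2\ell$. I expect this to require a careful case analysis using that $u$ and $w$ are separated, so a shortest $u$–$w$ path avoiding $r$ combines with $r$ into a long induced path. I would attack it first by taking a bag $B$ on the $T$-path and a large independent set in $B\setminus N(r)$, and deriving either an induced $P_5$ through $r,u,w$ or an induced $K_{\ell,\ell}$ via \zcref{lem:noKll}.
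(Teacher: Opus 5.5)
\textbf{There is a genuine gap}, located exactly at the step you flag as the main obstacle, and your proposed way around it cannot work.

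Your modification must succeed for an \emph{arbitrary} decomposition that satisfies (P1) and separates some pair of $N(r)$. In such a decomposition, the bags on the $T$-path between $T(u)$ and $T(w)$ are constrained only by (P1). Apart from the vertices separating $u$ from $w$, such a bag may contain an independent set of size close to $4\ell$ made of vertices unrelated to $r$, $u$ or $w$: vertices far from $N[r]$, or even vertices of another component of $G-r$.

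No use of $P_5$- or $K_{\ell,\ell}$-freeness can bound $\alpha(B\setminus N(r))$ by about $2\ell$ for such bags. Hypothesis (P2) does not help either, since it is only usable by contradiction, after a better decomposition has been built.

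The analogy with the proof of \zcref{thm:alpha-degeneracy-Sd-Kll} breaks down. There, $I'$ is part of a \emph{maximum} independent set of $G$, and this is what forces, via K\"onig's theorem, a matching from $J$ into $I'$ of size $|J|-1$. An independent set inside a bag has no such property. Its vertices with no neighbour in $N(r)$ are invisible to the $2K_2$-freeness argument.

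Consequently, adding even a single vertex of $N(r)$ to the bags along the path can push a bag from $4\ell$ to $4\ell+1$. Your alternative of deleting vertices ``only needed because of their adjacency to $r$'' is not meaningful: $\mathcal{T}$ decomposes $G-r$, so removing vertices from bags would in general leave edges uncovered.

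\textbf{The missing idea} is that the decomposition must be \emph{restructured}, not merely enlarged, and that the separated pair must be chosen carefully.

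The paper fixes a separated pair $(x,y)$ that is bad if possible, meaning $\alpha\bigl((N(x)\setminus N[r])\setminus N(y)\bigr)\geq\ell$. Subject to this, the pair maximizes the distance between $T(x)$ and $T(y)$. The paper then sets $M=N(r)\cup(N(x)\setminus N[r])\cup(N(y)\setminus N[r])$ and builds two kinds of trees:
\begin{itemize}
\item a master copy of $T$ with bags $B_t\cap M$, enlarged by $x$ along the $t_x$--$t_y$ path and by suitable vertices of $U$;
\item for every component $C$ of $G'-M$, its own copy of $T$ with bags $(B_t\cap N[C])\cup\bigl(N(C)\cap(U\setminus U_{xy})\bigr)$.
\end{itemize}

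Discarding everything outside $M$ in the master copy is what makes the budget $4\ell$ suffice. An independent set in a master bag containing $x$ lies in $N(r)\cup\bigl((N(y)\setminus N[r])\setminus N(x)\bigr)$. It therefore has fewer than $2\ell+(\ell-1)$ vertices when $(y,x)$ is not bad. The ``bad if possible'' rule guarantees, in the non-bad case, that every separated pair has this small private second neighbourhood. That is what bounds master bags containing added vertices of $U$.

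For the component copies, $P_5$-freeness makes $C$ complete to the vertices added to its copy. Hence an independent set using one of them avoids $C$ entirely, and is controlled by the bounds already established for the master part.

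When $(x,y)$ is bad, even this is not enough. The paper then needs further machinery: movable vertices, the maximality of the distance between $T(x)$ and $T(y)$, the enlarged set $M'$, and separate copies for the components of $G'-M'$. Your proposal has nothing corresponding to either case.
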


Let us argue that \zcref{lem:bagwithNr} indeed implies \zcref{thm:P5treealpha}.
\begin{proof}[Proof of \zcref{thm:P5treealpha} using \zcref{lem:bagwithNr}]
The proof is by induction on $\abs{V(G)}$; the base case of $\abs{V(G)} = 1$ is trivial.
For $\abs{V(G)} \geq 2$, we pick a vertex $r \in V(G)$ with $\alpha(N[r]) < 2\ell$ using \zcref{thm:alpha-degeneracy-P5-Kll-simplified}.
By the induction hypothesis, there is a tree-decomposition of $G - r$ such that the independence number of each bag is at most $4\ell$.
In particular, we pick a tree-decomposition $\mathcal{T} = (T,\{B_t\}_{t\in V(T)})$ of $G - r$ that satisfies \ref{it:induction_independence} and \ref{it:induction_maximal_dist}.
By \zcref{lem:bagwithNr}, each pair of vertices in $N(r)$ belongs to a common bag.
   By the Helly properties of trees, if for every pair of vertices $v,w\in N(r)$ the subtrees $T(v)$ and $T(w)$ have a common node, there is a node $t \in V(T)$ which is contained in $T(v)$ for every $v\in N(r)$.
    That is, the bag $B_t$ contains all vertices of $N(r)$.
    Then we construct a tree-decomposition of $G$ by adding to $T$ a new leaf node $t'$ adjacent to $t$ and set its bag $B_{t'} = N[r]$.
    This is clearly a tree-decomposition of $G$ witnessing $\tin(G) \leq 4\ell$.
\end{proof}

\subsection{%
\texorpdfstring{Proof of \zcref{lem:bagwithNr}}%
{Proof of Lemma 4.1}}
For the remainder of the section, we sketch the construction that enables us to prove \zcref{lem:bagwithNr}. 
For the full proof, we refer the reader to the appendix. 

\paragraph{Setting}
Let~$G' \coloneqq G - r$ and let~$\mathcal{T}$ be a tree-decomposition as in the premise of \zcref{lem:bagwithNr}. 
For a vertex $v \in N(r)$, we define $\NN(v) \coloneqq N(v) \setminus N[r]$.

Let $\mathcal{P}$ denote the set of ordered pairs $(x,y)$, where $x,y$ are distinct neighbors of $r$ that are not in a common bag of $\mathcal{T}$. 
Note that~$x$ and~$y$ are nonadjacent for all~${(x,y) \in \mathcal{P}}$. 
To prove \zcref{lem:bagwithNr}, we need to show that~$\mathcal{P} = \emptyset$. 

For a contradiction, suppose that~$\mathcal{P}$ is nonempty. 
We say that a pair $(x,y) \in \mathcal{P}$ is \emph{bad} if $\alpha(\NN(x)\setminus \NN(y)) \geq \ell$.
We fix a specific pair $(x,y) \in \mathcal{P}$ as follows:
If possible, we pick $(x,y)$ to be a bad pair, and subject to this rule, we pick $(x,y)$ that maximizes the distance between $T(x)$ and $T(y)$ in~$T$.

\paragraph{Neighbors of $r,x,$ and $y$}

We denote $N(r) \cup \NN(x) \cup \NN(y)$ by $M$, see also \zcref{fig:splitMbody}.
Let $U$ be the set of vertices $u\in N(r)$ that have a neighbor outside of $M \cup \{r\}$, that is, \[U = \{ u \in N(r) \mid \NN(u) \setminus (\NN(x) \cup \NN(y)) \neq \emptyset \}.\]
Note that neither $x$ nor $y$ belongs to $U$.
We further differentiate the vertices of $U$ based on the connections between $u \in U$ and $x,y$, as follows:
\begin{align*}
U_0 &\coloneqq \{u \in U \mid u \notin N(x) \cup N(y)\}\,, &
U_{x} &\coloneqq \{u \in U \mid u \in N(x) \setminus N(y)\}\,, \\
U_{y} &\coloneqq \{u \in U \mid u \in N(y) \setminus N(x)\}\,, &
U_{xy} &\coloneqq \{u \in U \mid u \in N(x) \cap N(y)\}\,.
\end{align*}

Let us also denote by $W_{x}$, $W_{y}$ and~$W_{x,y}$ the sets~$\NN(x) \cup \NN(y)$, $\NN(y) \cup \NN(x)$ and~$\NN(x) \cap \NN(y)$, respectively. 

\begin{figure}[t]
    \centering
    \includegraphics[scale=1.1]{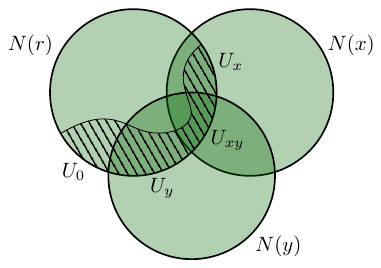}
    \caption{The green area denotes the set $M$. Striped area depicts the set $U$, i.e., vertices from $N(r)$ that have neighbors outside $M$.}
    \label{fig:splitMbody}
\end{figure}

\begin{observation}
    \label{claimcollection1}
    Let $C$ be a component of $G' - M$.
    Then, the following holds.
    \begin{enumerate}
        [label=(\alph*)]
        \item\label{claim1-1} $N(C) \subseteq U \cup W_{xy}$. %$N(C) \subseteq U \cup (\NN(x) \cap \NN(y))$. 
        \item\label{claim1-2} $C$ is complete to $N(C)\cap (U_0 \cup U_x\cup U_y)$. 
    \end{enumerate}
\end{observation}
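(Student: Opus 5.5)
Both parts follow from the definitions of $M$, $U$ and the $U$-subsets, combined with $P_5$-freeness through the vertex $r$. Note that $W_x$, $W_y$ are presumably meant as $\NN(x)\setminus\NN(y)$ and $\NN(y)\setminus\NN(x)$, and $W_{xy}=\NN(x)\cap\NN(y)$; I will use this reading. Fix a component $C$ of $G'-M$; recall that $r\notin V(G')$, so $C\cap N[r]=\emptyset$ and $C\cap(\NN(x)\cup\NN(y))=\emptyset$.

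\emph{Part \ref{claim1-1}.} Let $w\in N(C)$, so $w\in M$. There are two cases.
\begin{itemize}
\item If $w\in N(r)$, then $w$ has a neighbor $c\in C$. This $c$ lies outside $M\cup\{r\}$, so $c\in\NN(w)\setminus(\NN(x)\cup\NN(y))$. Hence $w\in U$.
\item Otherwise $w\in\NN(x)\cup\NN(y)$; by symmetry say $w\in\NN(x)$. Suppose $w\notin\NN(y)$. Take a neighbor $c\in C$ of $w$. Then $c\,w\,x\,r\,y$ is a path. It is induced because $c\notin N(x)\cup N(y)\cup N(r)$, $w\notin N(r)\cup N(y)$, and $xy\notin E(G)$ (since $(x,y)\in\mathcal P$). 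This is a $P_5$, a contradiction. So $w\in W_{xy}$.
\end{itemize}
This proves \ref{claim1-1}.

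\emph{Part \ref{claim1-2}.} Let $u\in N(C)\cap(U_0\cup U_x\cup U_y)$. Since $u\notin U_{xy}$, $u$ misses one of $x,y$; say $u\notin N(y)$ (the case $u\notin N(x)$ is symmetric). Suppose $u$ is not complete to $C$. Because $C$ is connected and $u$ has a neighbor in $C$, there is an edge $cc'$ in $C$ with $uc\in E(G)$ and $uc'\notin E(G)$. Then $c'\,c\,u\,r\,y$ is a path. It is induced because:
\begin{itemize}
\item $c,c'\notin N(r)\cup N(y)$, as they lie outside $M\cup\{r\}$;
\item $u\notin N(y)$ by assumption;
\item $c'u\notin E(G)$ by the choice of the edge $cc'$.
\end{itemize}
This is a $P_5$, a contradiction. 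Hence $u$ is complete to $C$.

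The only subtle point is using nonadjacency of $x$ and $y$ in part \ref{claim1-1}, and, in part \ref{claim1-2}, using that $u$ avoids $x$ or $y$. This is exactly why $U_{xy}$ is excluded from \ref{claim1-2}.
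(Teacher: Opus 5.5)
Your proof is correct and follows the paper's argument. Part (a) uses the induced path $c,w,x,r,y$, with $xy\notin E(G)$ because $(x,y)\in\mathcal{P}$; part (b) uses the induced path $c',c,u,r,y$ after choosing an edge $cc'$ of $C$ across which adjacency to $u$ changes. Your reading $W_x=\NN(x)\setminus\NN(y)$ and $W_y=\NN(y)\setminus\NN(x)$ is the intended one (the paper's displayed definition has a typo), and your explicit treatment of the case $w\in N(r)$ in part (a) is slightly more careful than the paper's one-line justification.
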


\begin{proof}
    For \ref{claim1-1}, by the definition of~$U$, each vertex outside of~$M$ has all its neighbours in~$\NN(x) \cup \NN(y) = W_x \cup W_y \cup W_{xy}$. 
    Suppose for a contradiction that $c \in C$ has a neighbor $w$ in, say, the set $W_x$ (the other case is symmetric).
    Then $G$ contains a copy of $P_5$ on vertices $c,w,x,r,y$, see \zcref{fig:C_adjacent_to_U_and_N2(x)capN2(y)-main}.

    \begin{figure}[t]
    \begin{minipage}[t]{.5\linewidth}
    \centering
    \includegraphics[page=4,scale=1.2]{figures/find_p5.pdf}
    \subcaption{A copy of $P_5$ in \zcref{claimcollection1}\ref{claim1-1}.}
    \label{fig:C_adjacent_to_U_and_N2(x)capN2(y)-main}
    \end{minipage}
    \begin{minipage}[t]{.4\linewidth}
    \centering
    \includegraphics[page=5,scale=1.2]{figures/find_p5.pdf}
    \subcaption{A copy of $P_5$ in \zcref{claimcollection1}\ref{claim1-2}.}
    \label{fig:C_complete_to_neighborhood_in_Uxy0-main}
    \end{minipage}
    \caption{The copies of $P_5$ from the proof of \zcref{claimcollection1}.}
    %\label{fig:2}}
    \end{figure}
    
    For \ref{claim1-2}, towards a contradiction, assume that the component $C$ is not complete to $N(C)\cap (U_0 \cup U_x\cup U_y)$, that is, there are two vertices $c,c' \in C$ and a vertex $u \in N(C)\cap (U_0 \cup U_x\cup U_y)$ such that $u$ is adjacent to $c$, but nonadjacent to $c'$.
    
    Moreover, we may assume that $c$ is adjacent to $c'$.
    Indeed, as $C$ is a component, there exists a path $P$ from $c$ to $c'$ within $C$.
    The path starts from a vertex adjacent to $u$ and ends in a vertex that is not adjacent to $u$.
    Hence, there are two consecutive vertices on $P$ such that one is adjacent to $u$ and the other is not.

    We claim that $G$ contains a $P_5$, leading to a contradiction.
    By symmetry, assume that $u \in U_0 \cup U_x$.
    Then $u$ is not adjacent to $y$, and $c',c,u,r,y$ is an induced $P_5$, see~\zcref{fig:C_complete_to_neighborhood_in_Uxy0-main}. 
\end{proof}

Towards proving \zcref{lem:bagwithNr}, we distinguish two cases, depending on whether the pair $(x,y)$ is bad or not.
The general outline of the proof in these two cases is similar, but the case that $(x,y)$ is not a bad pair is significantly less involved. 
We only sketch the proof for the case that $(x,y)$ is not a bad pair and refer to the appendix for the full proof. 

\paragraph{Case 1: The pair $(x,y)$ is not bad}

By the choice of $(x,y)$, we know that there are no bad pairs in $\mathcal{P}$ at all.
We let $t_x$ and $t_y$ be the closest nodes of $T$ such that $t_x \in T(x)$ and $t_y \in T(y)$.
Note that $t_x$ and $t_y$ are distinct, since $(x,y) \in \mathcal{P}$ and uniquely defined since $T$ is a tree.

We create a new tree-decomposition $\mathcal{T}'=(T', \{B'_t\}_{t \in V(T')})$ of $G'$ satisfying~\ref{it:induction_independence} such that all the pairs of vertices $v,w \in N(r)$ that are in a common bag in $\mathcal{T}$ are so in $\mathcal{T}'$ as well, and additionally $x$ and $y$ appear in a common bag in $\mathcal{T}'$.
This gives a contradiction with property~\ref{it:induction_maximal_dist} of $\mathcal{T}$.

Let us start by defining $T'$, see \zcref{fig:no_bad_pair_decomposition-main}.
We create a ``master'' copy $T^M$ of $T$ and another copy $T^C$ of $T$ for every component $C$ of $G'-M$.
For each $t\in V(T)$, we denote by $t^M$ and $t^C$ the copies of $t$ in the respective copies $T^M$ and $T^C$ of $T$.
Now, we create $T'$ by taking the disjoint union of all these copies and adding edges $t_y^Mt_y^C$ for every component $C$ of $G'-M$.

    \begin{figure}[h!]
        \centering
        \includegraphics[width=0.9\textwidth]{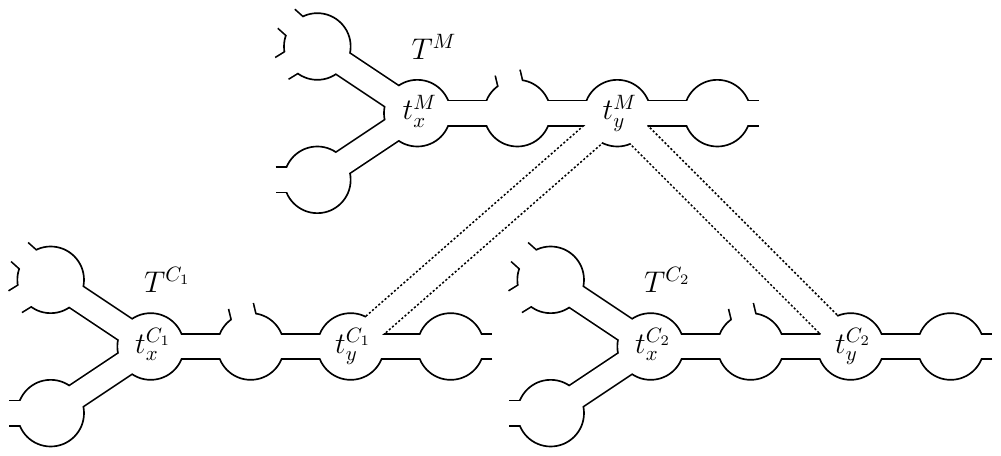}
        \caption{Tree $T'$ obtained by connecting $T^M$ and $T^C$ for each component $C$ (here we sketched just two components $C_1$ and $C_2$).}
        \label{fig:no_bad_pair_decomposition-main}
    \end{figure}
    
Now, let us define the bags $B'_t$ for $t \in V(T')$.
For each $t^M \in V(T^M)$, we do the following:
\begin{enumerate}
    \item We initialize $B'_{t^M} \coloneqq B_t \cap M$.
    \item For each vertex $u\in U$, we extend the subtree $T^M(u)$ minimally so it contains vertices $t^M_x$ and $t^M_y$. 
    In other words, we add the vertex $u$ to the bags of $t^M_x$ and $t^M_y$, and to the minimum number of other bags so that $T^M(u)$ is connected.
    \item We add the vertex $x$ to the bag of every node on the path from $t^M_x$ to $t^M_y$ in $T^M$.
\end{enumerate}
Now, consider a component $C$ of $G'-M$. 
For each $t^C\in V(T^C)$, we set 
\[
    B'_{t^C} \coloneqq \bigl( B_t \cap N[C] \bigr) \cup \bigl( N(C) \cap (U \setminus U_{xy}) \bigr).
\]
This completes the definition of $\mathcal{T}'$.
Note that for every $t^M \in V(T^M)$ we have $B'_{t^M} \subseteq M$.  

\begin{claim}
    $\mathcal{T}'$ is a tree-decomposition of~$G'$. 
\end{claim}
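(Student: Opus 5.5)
The plan is to verify the two axioms of a tree-decomposition directly, using \zcref{claimcollection1}\ref{claim1-1} as the key structural input. First record the vertex partition: $r \notin M$, so $V(G') = M \cup \bigcup_C C$, where the union is over the components $C$ of $G'-M$. Bags of $T^M$ contain only vertices of $M$. Bags of $T^C$ contain only vertices of $N[C]$, and $N(C) \subseteq U \cup W_{xy}$ with $W_{xy} = \NN(x)\cap\NN(y)$. Note also that $T'$ is a tree: it consists of copies of $T$ joined to $T^M$ by single edges $t_y^Mt_y^C$.

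\textbf{Edges.} There are three kinds of edges of $G'$ to check.
\begin{itemize}
\item An edge with both ends in $M$ lies in some $B_t$, so both ends lie in $B'_{t^M} \supseteq B_t \cap M$.
\item An edge with both ends in $N[C]$ (in particular an edge inside $C$, or an edge from $C$ to $N(C)$) lies in some $B_t$, so both ends lie in $B'_{t^C} \supseteq B_t\cap N[C]$.
\item There are no other edges. Distinct components of $G'-M$ are nonadjacent, and $C$ has no neighbours in $M \setminus N(C)$.
\end{itemize}

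\textbf{Nonempty connected subtrees, vertices of $C$ and of $M\setminus N(C)$.} A vertex $c \in C$ appears only in $T^C$, where $T'(c)$ is a copy of $T(c)$. A vertex $v \in M$ that is not in $N(C)$ for any $C$ appears only in $T^M$. There $T'(v)$ is a copy of $T(v)$, possibly enlarged:
\begin{itemize}
\item if $v\in U$, it is enlarged minimally so as to contain $t^M_x$ and $t^M_y$, which keeps it connected;
\item if $v=x$, it is enlarged by the $t^M_x$–$t^M_y$ path, which is connected to $T^M(x)\ni t^M_x$.
\end{itemize}
Here we use that $x \notin U$ and $x \notin \NN(\cdot)$, so by \zcref{claimcollection1}\ref{claim1-1} the vertex $x$ lies in no $N(C)$.

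\textbf{The main case: $v \in N(C)$ for some components $C$.} This is the step needing care. Here $T'(v)$ meets $T^M$ and each such $T^C$, and these pieces must be glued through the edges $t_y^Mt_y^C$. It suffices to show that $t_y^M$ and $t_y^C$ both lie in $T'(v)$.
\begin{itemize}
\item \emph{$v \in U\setminus U_{xy}$.} Then $v$ lies in every bag of $T^C$ by definition. In $T^M$ it was extended to contain $t^M_y$.
\item \emph{$v\in U_{xy}$ or $v\in W_{xy}$.} Here $T^C(v)$ is a copy of $T(v)$, so it suffices that $t_y \in T(v)$. Since $v$ is adjacent to both $x$ and $y$, the connected subtree $T(v)$ meets both $T(x)$ and $T(y)$. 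Hence it contains the whole $t_x$–$t_y$ path, in particular $t_y$. So $v\in B_{t_y}$, which gives $t_y^C\in T'(v)$. Also $v \in B_{t_y}\cap M \subseteq B'_{t^M_y}$, so $t_y^M \in T'(v)$.
\end{itemize}
Thus $T'(v)$ is the union of connected pieces, each containing an endpoint of an edge $t^M_yt^C_y$ whose other endpoint lies in the connected piece $T^M(v)$. Hence $T'(v)$ is connected, which completes the verification.
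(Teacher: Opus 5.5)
Your proof is correct and follows the paper's own argument. You use the same reduction via $N(C) \subseteq U \cup W_{xy}$, the same two subcases ($v \in U \setminus U_{xy}$ versus $v$ a common neighbour of $x$ and $y$) to show that $t^M_y$ and $t^C_y$ both lie in $T'(v)$, and the same split of edges into those inside $M$ and those inside $N[C]$; you are only somewhat more explicit, for instance in checking that $x$ lies in no $N(C)$ and that $T^C(v)$ is a copy of $T(v)$ when $v \in U_{xy} \cup W_{xy}$.
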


\begin{subproof}
    First, let us argue that $T'(v)$ is nonempty.
    Consider the case $v \in M$, the other one is similar.
    Note that $T(v)$ is nonempty by the definition of $\mathcal{T}$, and the copies of nodes in $T(v)$ in the tree $T^M$ are contained in $T'(v)$, as we were only adding vertices to the bags.
    This in particular means that $T'(v)$ is nonempty.

    It is clear from the construction that for each $v$ the set of bags where $v$ appears is connected when restricted to either $T^M$ or some $T^C$.
    
    Observe that the vertices that do not belong solely to the bags of nodes in $T^M$ or in $T^C$ for some $C$ are exactly those vertices $v$ that belong to the neighborhood of $C$ for some component~$C$ of $G' - M$.
    Thus, it remains to prove that for every such component $C$ and every $v \in N(C)$, the set $T'(v)$ is connected.
    For this, it is enough to show that $v$ belongs to the bags of both $t^M_y$ and $t^C_y$, since these two nodes connect $T^M$ and $T^C$.

    By \zcref{claimcollection1}\ref{claim1-1}, we have 
    $v \in U \cup W_{xy}$. 
    %$v \in U \cup (\NN(x) \cap \NN(y))$.
    
    If $v$ is a common neighbor of $x$ and $y$, so in particular if 
    $v \in U_{xy} \cup W_{xy}$,
    %$v \in U_{xy} \cup (\NN(x) \cap \NN(y))$,
    we claim that~$v$ belongs to $B_{t_y}$ in $T$.
    Indeed, since $v$ is adjacent to both $x$ and $y$, the set $T(v)$ must intersect both $T(x)$ and $T(y)$ so, as $T$ is a tree, it must contain both $t_x$ and $t_y$.
    Thus, $v$ belongs to the bags of both $t^M_y$ and $t^C_y$.

    So, assume that $v \in U \setminus U_{xy}$.
    On one hand, $v$ belongs to the bag of $t^M_y$, as it was explicitly added there in the second step of the definition of $B'_{t^M}$ for $t^M \in V(T^M)$.
    On the other hand, $v$ belongs to the bags of all nodes in $T^C$, so in particular to the bag of $t^C_y$.

    Lastly, we need to argue that for each edge $uv \in E(G')$, there is a node $t \in V(T')$ such that $u,v \in B'_t$. 
    Since $T$ is a tree-decomposition of $G'$, there is a node $t \in V(T)$ such that $u,v \in B_t$.
    If $u,v \in M$, then $u,v \in B'_{t^M}$.
    Otherwise, we may assume that $u$ belongs to some component $C$ of $G'-M$ and $v \in N[C]$.
    In this case we have $u,v \in B'_{t^C}$.
\end{subproof}

\begin{claim}
    It holds that $\alpha(\mathcal{T}') \leq 4\ell$.
\end{claim}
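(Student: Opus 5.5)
The plan is to bound $\alpha(B'_s)$ separately for the two kinds of nodes $s$ of $T'$. In both cases I would write an independent set $I \subseteq B'_s$ as the part that already lies in the original bag $B_t$ and the part that was added during the construction. If nothing was added, $\alpha \le 4\ell$ follows from \ref{it:induction_independence}. Every added vertex lies in $N(r)$, since $U \cup \{x\} \subseteq N(r)$. So the added part of $I$ is always an independent subset of $N(r)$, and has fewer than $2\ell$ vertices because $\alpha(N[r]) < 2\ell$. The task in each case is therefore to show that whenever $I$ really uses added vertices, the remaining part of $I$ has at most about $2\ell$ vertices rather than $4\ell$.

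\emph{Component bags.} Fix a component $C$ and $t^C$, with $B'_{t^C} = (B_t \cap N[C]) \cup (N(C) \cap (U \setminus U_{xy}))$. By \zcref{claimcollection1}\ref{claim1-2}, the set $N(C) \cap (U_0 \cup U_x \cup U_y)$ is complete to $C$. Hence if $I$ contains a vertex of $C$, then $I$ contains no added vertex, so $I \subseteq B_t$ and $\abs{I} \le 4\ell$. Otherwise $I \subseteq N(C) \subseteq U \cup W_{xy}$ by \zcref{claimcollection1}\ref{claim1-1}. Its part in $U \subseteq N(r)$ has fewer than $2\ell$ vertices, and I must bound $\alpha(W_{xy} \cap N(C) \cap B_t)$ by roughly $2\ell$.

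For this I would exploit that every $w \in W_{xy}$ is adjacent to $x$ and $y$ but not to $r$. I would combine an independent set in $W_{xy} \cap N(C)$ with vertices of $C$ and with $x,y,r$, and look either for an induced $P_5$ (of the shape $c, w, x, r, \cdot$) or for a large induced biclique. A cruder alternative is to charge $I \cap W_{xy}$ against $B_t$ together with $I \cap U$, using that vertices of $U$ and of $W_{xy}$ interact in a constrained way.

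\emph{Master bags.} Here $B'_{t^M}$ is $B_t \cap M$ together with possibly $x$ and some vertices of $U$. If $x \in I$, then $I$ avoids $N(x) \supseteq \NN(x) \cup U_x \cup U_{xy}$. Then $I \setminus N(r) \subseteq \NN(y) \setminus \NN(x)$, and this set has independence number less than $\ell$ because $(y,x)$ is not bad; recall that in Case~1 there are no bad pairs. This gives $\abs{I} < 2\ell + \ell$. If $x \notin I$ but $I$ contains an added $u \in U$, then $t$ lies on the path between $T(u)$ and $\{t_x, t_y\}$. In this situation I would split $I \setminus N(r)$ into three parts:
\begin{itemize}
\item the part in $\NN(x) \setminus \NN(y)$, which has fewer than $\ell$ vertices since $(x,y)$ is not bad;
\item the part in $\NN(y) \setminus \NN(x)$, which has fewer than $\ell$ vertices since $(y,x)$ is not bad;
\item the part in $W_{xy}$.
\end{itemize}
The first two parts together with $I \cap N(r)$ total less than $4\ell$.

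The main obstacle, in both cases, is the set $W_{xy}$ of common second neighbours of $x$ and $y$. Neither $\alpha(N[r]) < 2\ell$ nor the ``not bad'' condition controls $W_{xy}$ directly. My first attempt would use the tree structure. Each $w \in W_{xy}$ has $T(w)$ meeting both $T(x)$ and $T(y)$, so $w$ lies in every bag on the $t_x$--$t_y$ path. I would then argue that an independent set in $W_{xy}$ together with the added $U$-vertices on that path is already contained in one original bag, or else yields an induced $P_5$ through $u, r, x$. If this fails, I would refine the split so that $W_{xy}$ is counted inside the original $4\ell$ budget and the added $N(r)$-vertices are absorbed by the missing $\NN(x)$ or $\NN(y)$ contributions.
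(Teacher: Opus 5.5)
Your treatment of the case $x\in I$ in the master bags, and of component bags whose independent set meets $C$, is correct and matches the paper. The gap is the $W_{xy}$ part: you identify it but never bound it, and your claim that the no-bad-pairs hypothesis does not control it is wrong. The fix is to apply that hypothesis to a pair formed with the added vertex, not to $(x,y)$ or $(y,x)$. Those two pairs only concern $\NN(x)\setminus\NN(y)$ and $\NN(y)\setminus\NN(x)$, so your three-way split cannot close.

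Suppose $u\in I\cap U$ was added to $B'_{t^M}$ in the second step. Then $T(u)$ cannot meet both $T(x)$ and $T(y)$, since it would then contain the whole $t_x$--$t_y$ path and nothing would have been added. Say $T(u)\cap T(x)=\emptyset$. Then $u$ and $x$ are distinct neighbours of $r$ sharing no bag, so $(x,u)\in\mathcal{P}$. Hence $(x,u)$ is not bad, i.e.\ $\alpha(\NN(x)\setminus\NN(u))\le\ell-1$. As $u\in I$, we have $I\cap\NN(u)=\emptyset$, so $I\cap\NN(x)\subseteq\NN(x)\setminus\NN(u)$. This one estimate covers $\NN(x)\setminus\NN(y)$ and $W_{xy}$ together. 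Combined with $\abs{I\cap N(r)}<2\ell$ and $\alpha(\NN(y)\setminus\NN(x))\le\ell-1$, it gives $\abs{I}<4\ell$. The case $T(u)\cap T(y)=\emptyset$ is symmetric, using $(y,u)$.

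For the component bags, you are right that something must be done about the vertices of $I$ in $B_t\cap N(C)\cap W_{xy}$ once $I$ contains a new vertex and therefore avoids $C$. The paper's written argument jumps directly to $I\subseteq N(C)\cap(U\setminus U_{xy})$ and passes over them. However, no $P_5$ or biclique argument is needed, and the alternatives you list are not carried out anyway. Every vertex of $U$ was placed in $B'_{t^M_y}$ in the second step. Every $w\in W_{xy}$ lies in $B_{t_y}\cap M\subseteq B'_{t^M_y}$, because $T(w)$ meets both $T(x)$ and $T(y)$ and hence contains $t_y$. Thus $I\subseteq N(C)\subseteq U\cup W_{xy}\subseteq B'_{t^M_y}$, and the master-bag bound gives $\abs{I}\le 4\ell$.

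Neither of these steps appears in your proposal, so as written it does not yet prove the claim.
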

\begin{subproof}
    First, consider a node $t^M$ of $T^M$.
    Let $I$ be an independent set of $G'$ contained in $B'_{t^M}$.
    If $I \subseteq B_t$, we have $\abs{I} \leq 4\ell$ by the definition of $\mathcal{T}$.
    Hence, assume that $I$ contains some vertex that was added either in the second or the third step of the construction of $B'_{t^M}$.
    Such a vertex is either $x$ or some $u \in U$.
    Suppose first that $x \in I$. 
    As $I \subseteq B'_{t^M} \subseteq M = N(r) \cup \NN(x) \cup \NN(y)$ and $I\cap \NN(x) = \emptyset$, since $x\in I$, we have
    \[
        \abs{I} = \abs{I \cap N(r)} + \abs{I \cap W_y} < 2\ell + (\ell - 1),
    \]
    by the choice of $r$ and since~$(y,x)$ is not a bad pair (as we are in the case that no pairs in~$\mathcal{P}$ are bad). 
    
    So suppose that there is a vertex $u \in I$ such that $u \in U$.
    Note that we can assume that $T(u) \cap T(x) = \emptyset$ or $T(u) \cap T(y) = \emptyset$, as otherwise $u$ must be present in both $B_{t_x}$ and $B_{t_y}$ and thus, was not added in the second step of the construction of $B'_{t^M}$.
    Consider the case $T(u) \cap T(x) = \emptyset$, the other one is similar.
    % By symmetry, assume that $T(u) \cap T(x) = \emptyset$.
    Since $u\in I$, we have $I\cap \NN(u) = \emptyset$.
    Furthermore, since $T(u) \cap T(x) = \emptyset$, vertices $u$ and $x$ are nonadjacent and hence, $(x,u)\in \mathcal{P}$.
    This implies
    \[
        \abs{I} \leq \abs{I \cap N(r)} + \abs{I \cap \big(\NN(x) \setminus \NN(u)\big)} + \abs{I \cap W_y} < 2\ell + (\ell - 1) + (\ell - 1)
        ,
    \]
    where $\abs{I \cap \big(\NN(x) \setminus \NN(u)\big)} \leq \ell - 1$ follows from the fact that $(x,u)$ is not a bad pair.
    
    Finally, consider a component $C$ of $G'-M$ and a node $t^C$ in $T^C$.
    Let $I \subseteq B'_{t^C}$ be an independent set.
    Again, if $I \subseteq B_t$, we have $\abs{I} \leq 4\ell$ by the assumption on $\mathcal{T}$.
    Hence, assume that $I$ contains some of the new vertices from $N(C) \cap (U \setminus U_{xy})$.
    However, the component $C$ is complete to these vertices by \zcref{claimcollection1}\ref{claim1-2}.
    Thus, $I \subseteq N(C) \cap (U \setminus U_{xy}) \subseteq N(r)$, so $\abs{I} < 2\ell$ by the choice of $r$.
\end{subproof}

To complete the proof sketch of \zcref{lem:bagwithNr} for the case that $(x,y)$ is not a bad pair, let us argue that the number of pairs of vertices in $N(r)$ that appear in a single bag increased, compared to $\mathcal{T}$.
First, consider a pair $x',y' \in N(r)$ of distinct vertices that appear in the bag of some node of $\mathcal{T}$, say $x',y' \in B_t$.
Note that $x',y' \in M$ and thus, $x',y' \in B'_{t^M}$.
Additionally, due to the third step of the construction of $B'_{t^M}$, we have $x,y \in B'_{t^M_y}$.
Thus, by \ref{it:induction_maximal_dist}, this contradicts the choice of $\mathcal{T}$ and completes the proof sketch of \zcref{lem:bagwithNr} in this case.

\paragraph{Case 2: The pair $(x,y)$ is bad}

Let us briefly discuss the general strategy of how to proceed when~$(x,y)$ is a bad pair. 

In order to get a contradiction to the existence of such $x,y$ and hence to prove \zcref{lem:bagwithNr} in Case 2, we construct a tree-decomposition $\mathcal{T}'=(T',\{B'_t\}_{t \in V(T)})$ that satisfies \ref{it:induction_independence} and is superior to $\mathcal{T}$ with respect to \ref{it:induction_maximal_dist}.
As before, $T'$  is composed of a ``master'' subtree $T^M$ to which we join subtrees $T^D$ reminiscent of $T^C$ from Case~1.
However, the construction of the bags is more involved compared to Case~1, as we do not have a bound on  $\alpha(W_x)$. 
Thus, we cannot move vertices of $U$ as freely within the bags of nodes of $T^M$.

To be more specific, we construct the bags of nodes in $T^M$ in two steps.
First, we define intermediate bags $B''_{t^M}$ by intersecting the bag $B_t$ with $M$, adding some of the vertices from $U$ to the bag.
We also add the vertex $x$ to some bags, in order to ensure that the vertices $x$ and $y$ appear in a common bag.
This yields an intermediate decomposition $\mathcal{T}^M$ of $G'[M]$, which is similar to the final tree-decomposition from Case~1, restricted to $T^M$.

Unlike in the previous case, however, there might be vertices $c$ from some component $C$ of $G'-M$ whose neighborhood within $M$ is \emph{not} contained in a single bag of $\mathcal{T}^M$, which makes it difficult to cover the edges between $c$ and $N(c)$ in $T^C$ (as in the previous case).
Hence, as the second step, we define bags $B'_{t^M}$ by extending the bags  $B''_{t^M}$ by such vertices $c$,
so that we cover the problematic edges in the bags of nodes in $T^M$ instead.
Let us define $M' \coloneqq M \cup \bigcup_C C'$, where $C'$ are the vertices from $C$ added to the bags of $T^M$.
We show that ${\mathcal{T}}^{M'} \coloneqq (T^M,\{B'_t\}_{t \in V(T^M)})$ is actually a tree-decomposition of $G'[M']$.

The role of the nodes of $T^D$ is very similar to the role of $T^C$ from Case~1.
For each component $D$ of $G' - M'$, we create a copy $T^D$ of $T$ whose bags will be representing the adjacencies within $D$ and between $D$ and $N(D)$, yielding a tree-decomposition ${\mathcal{T}}^D$ of the graph with vertex set~$N[D]$ formed by edges that have at least one endpoint in~$D$. 
Moreover, the choice of nodes by which we join $T^D$ to $T^M$ is slightly more elaborate compared to Case~1, and possibly different for different components~$D$.

We present the proof as follows. 
After making some more observations about the neigbors of~$r$, $x$, and~$y$, we first construct the tree-decompositions ${\mathcal{T}}^M$, ${\mathcal{T}}^{M'}$, and ${\mathcal{T}}^D$, and prove some of their properties. 
Then we put everything together to derive the final contradiction.

\paragraph{More on the neighbors of~$r$, $x$, and~$y$}

Let us continue by making some observations related to the neighborhood structure in~$G'[M]$. 
Note that none of these need the fact that~$(x,y)$ is bad, but if we assume that~$(x,y)$ is bad, then \ref{claim2-2} implies that~$(y,x)$ is not bad. 

\begin{observation}
    \label{claimcollection2}
    The following statements hold. 
    \begin{enumerate}
    [label=(\alph*)]
        \item\label{claim2-1} $W_x$ is complete to~$W_y$. 
        \item\label{claim2-2} Either $\alpha(W_x) \leq \ell-1$, or $\alpha(W_y) \leq \ell-1$. %\\ In particular, if~$(x,y)$ is bad, then $\alpha(W_y) \leq \ell-1$. 
        \item\label{claim2-3} $U_x$ is complete to $U_y$. 
        \item\label{claim2-4} $U_0$ is complete to $W_x \cup W_y$. %$(\NN(x) \setminus \NN(y)) \cup (\NN(y) \setminus \NN(x))$.
        \item\label{claim2-5} $U_x$ is complete to $W_y$. %$\NN(y) \setminus \NN(x)$. 
        \item\label{claim2-6} $U_y$ is complete to $W_x$. %\NN(x) \setminus \NN(y)$. 
        \item\label{claim2-7} Each component $C$ of $G' - M$  is complete to $N(C) \cap W_{xy}$. %$N(C)\cap \NN(x)\cap \NN(y)$. 
    \end{enumerate}
\end{observation}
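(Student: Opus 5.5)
The plan is to derive every item from an explicit induced $P_5$. Each $P_5$ is built from $r$, $x$, $y$ and the relevant vertices. Two facts are used throughout. First, $x$ and $y$ are nonadjacent, since $(x,y)\in\mathcal{P}$. Second, every vertex of $\NN(\cdot)$ is nonadjacent to $r$. I read $W_x$ as $\NN(x)\setminus\NN(y)$, $W_y$ as $\NN(y)\setminus\NN(x)$, and $W_{xy}=\NN(x)\cap\NN(y)$. For $u\in U$ I fix a \emph{private} neighbour $c_u\in\NN(u)\setminus(\NN(x)\cup\NN(y))$. Then $c_u\notin M\cup\{r\}$, so $c_u$ is nonadjacent to $r$, $x$ and $y$. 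Moreover, $c_u$ lies in a component of $G'-M$, so by \zcref{claimcollection1}\ref{claim1-1} it has no neighbours in $W_x\cup W_y$.

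For \ref{claim2-1}: if $w\in W_x$ and $w'\in W_y$ are nonadjacent, then $w,x,r,y,w'$ is an induced $P_5$. Item \ref{claim2-2} follows at once. Independent sets of size $\ell$ in both $W_x$ and $W_y$ would be disjoint and, by \ref{claim2-1}, complete to each other, giving an induced $K_{\ell,\ell}$. For \ref{claim2-4}: let $u\in U_0$ be nonadjacent to $w\in W_x$. Then $w,x,r,u,c_u$ is an induced $P_5$, where $c_u w\notin E$ by the remark above and $xu\notin E$ because $u\in U_0$. The case $w\in W_y$ is symmetric, using $y$. Items \ref{claim2-5} and \ref{claim2-6} are the same argument. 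For $u\in U_x$ and a nonadjacent $w\in W_y$, use $w,y,r,u,c_u$, noting that $u$ is nonadjacent to $y$.

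For \ref{claim2-7}: as in the proof of \zcref{claimcollection1}\ref{claim1-2}, it suffices to exclude adjacent $c,c'\in C$ and $w\in W_{xy}$ with $cw\in E$ and $c'w\notin E$. In that situation $c',c,w,x,r$ is an induced $P_5$. Here $C\cap N(x)=C\cap N(r)=\emptyset$ because $C\cap M=\emptyset$, and $wr\notin E$.

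The one step needing a case split, and the main obstacle, is \ref{claim2-3}. Let $u\in U_x$ and $u'\in U_y$ be nonadjacent, with private neighbours $c=c_u$ and $c'=c_{u'}$. If $cu'\in E$, then $x,u,c,u',y$ is an induced $P_5$; all the required non-edges follow from the definitions of $U_x$ and $U_y$ and from $c\notin N(x)\cup N(y)$. Symmetrically, if $c'u\in E$, then $x,u,c',u',y$ works. So assume $cu',c'u\notin E$. If also $cc'\notin E$, then $c,u,r,u',c'$ is an induced $P_5$. Otherwise $cc'\in E$, and $x,u,c,c',u'$ is an induced $P_5$: $x$ misses $c,c',u'$, and $u$ misses $c',u'$. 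This exhausts the cases and completes the plan.
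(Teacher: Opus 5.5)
Your proposal is correct and follows essentially the same argument as the paper: each item comes from the same induced $P_5$'s, built from $r$, $x$, $y$ and a neighbour of $u$ outside $M$, with part (a) of the previous observation supplying the non-edge between that neighbour and $W_x\cup W_y$ in (d)--(f). The only cosmetic difference is in the final case of (c), where you take the induced $P_5$ on $x,u,c,c',u'$ directly, while the paper exhibits the induced $P_6$ obtained by appending $y$.
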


\begin{proof}
   For~\ref{claim2-1}, note that if there exists a vertex $w_x \in W_x$ nonadjacent to some vertex $w_y \in W_y$, then the set ${\{w_x,x,r,y,w_y\}}$ induces a copy of~$P_5$, see \zcref{fig:diffs_are_complete}. 

    \begin{figure}[thbp]
        \begin{minipage}[t]{.32\linewidth}
            \centering
            \includegraphics[page=1,scale=1.2]{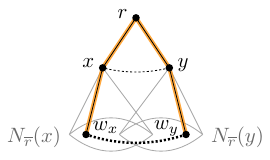}
            \subcaption{In the proof of~\ref{claim2-1}.}
            \label{fig:diffs_are_complete}
        \end{minipage}
        \begin{minipage}[t]{.32\linewidth}
            \centering
            \includegraphics[page=7,scale=1.2]{figures/find_p5.pdf}
            \subcaption{In the proof of~\ref{claim2-5}.}
            \label{fig:P5easy}
        \end{minipage}
        \begin{minipage}[t]{.32\linewidth}
            \centering
            \includegraphics[page=6,scale=1.1]{figures/find_p5.pdf}
            \subcaption{In the proof of~\ref{claim2-7}.}
            \label{fig:C_complete_to_neighborhood_in_N2(x)capN2(y)}
        \end{minipage}
            
        \begin{minipage}[t]{.32\linewidth}
        \centering
                \includegraphics[page=2,scale=1]{figures/find_p5.pdf}
                \subcaption{In the proof of~\ref{claim2-3} when $u_x$ and $u_y$ have a common neighbor;}%
                \label{fig:U_x_and_U_y_complete_same}
        \end{minipage}
        \begin{minipage}[t]{.32\linewidth}
        \centering
                \includegraphics[page=19,scale=1]{figures/find_p5.pdf}
                 \subcaption{In the proof of~\ref{claim2-3} when $u_x$ and $u_y$ have no common neighbor and $w_xw_y \notin E(G)$;}%
                \label{fig:U_x_and_U_y_complete_different-nonedge}
        \end{minipage}
        \begin{minipage}[t]{.32\linewidth}
        \centering
                \includegraphics[page=3,scale=1]{figures/find_p5.pdf}
                 \subcaption{In the proof of~\ref{claim2-3} when $u_x$ and $u_y$ have no common neighbor and $w_xw_y \in E(G)$.}%
                \label{fig:U_x_and_U_y_complete_different-edge}
        \end{minipage}
        \caption{The various copies of $P_5$ in the proof of \zcref{claimcollection2}. Full lines show edges, dotted lines for non-edges; thickness signify an assumption that led to a contradiction.}
        \label{fig:U_x_and_U_y_complete}
    \end{figure}
    For~\ref{claim2-2}, if both $\alpha(W_x) \geq \ell$ and $\alpha(W_y) \geq \ell$, then $G$ contains a copy of $K_{\ell,\ell}$, as the sets are complete to each other by \ref{claim2-1}. 

    To prove~\ref{claim2-3}, suppose for a contradiction that there are $u_x \in U_x$ and $u_y \in U_y$ that are nonadjacent. 
    By definition of~$U$, each of $u_x$ and $u_y$ has a neighbor outside of~$M$. 
    We distinguish two cases of how we find a copy of $P_5$. 
    If $u_x$ and $u_y$ have a common neighbor outside $M$, say $w_u$, we have a copy of $P_5$ with consecutive vertices $x,u_x,w_u,u_y,y$, see \zcref{fig:U_x_and_U_y_complete_same}. 
    So suppose that no such common neighbor exists.
    We choose $w_x \in \NN(u_x) \setminus (N(x) \cup N(y))$ and $w_y \in \NN(u_y) \setminus (N(x) \cup N(y))$ arbitrarily.
    By the failure of the previous case, we have $w_xu_y \notin E$ and $w_yu_x \notin E$.
    Moreover, $w_x$ must be adjacent to $ w_y$ as otherwise $\{w_x,u_x,r,u_y,w_y\}$ induces a copy of~$P_5$ in~$G$, see \zcref{fig:U_x_and_U_y_complete_different-nonedge}.
    However, this yields that~${G[\{x,u_x,w_x,w_y,u_y,y\}]}$ is isomorphic to~$P_6$, see \zcref{fig:U_x_and_U_y_complete_different-edge}. 

    For \ref{claim2-4}, \ref{claim2-5}, and \ref{claim2-6}, 
    note that it is enough to show that for every~${u \in U}$ such that $ux \notin E(G)$ or $uy \notin E(G)$, it holds that $u$ is adjacent to every vertex in~$W_x$ or $W_y$, respectively. 
    Fix an arbitrary $w_u \in N(u)\setminus M$; this vertex exists since $u \in U$. 
    Fix any $w_x \in W_x$; note that $w_x \notin U$ as $U \subseteq N(r)$. 
    Note that $w_u$ is not adjacent to $w_x$ by \zcref{claimcollection1}\ref{claim1-1}. 
    But then there is a $P_5$ on $w_x,x,r,u,w_u$, see \zcref{fig:P5easy}, unless $u$ is adjacent to $w_x$.
    The other case with $uy \notin E(G)$ is symmetric. 

    Lastly, to prove \ref{claim2-7}, suppose for a contradiction that $C$ is not complete to $N(C) \cap \NN(x)\cap \NN(y)$. 
    Then there are adjacent vertices $c, c' \in C$ and $w \in \NN(x)\cap \NN(y)$ such that $w$ is adjacent to $c$ but not to $c'$ (the adjacency of~$c$ and~$c'$ can be assumed as in the proof of \zcref{claimcollection1}\ref{claim1-2}). 
    Then $G$ contains a copy of $P_5$ on vertices $c',c,w,x,r$, see \zcref{fig:C_complete_to_neighborhood_in_N2(x)capN2(y)}.
\end{proof}

\paragraph{Defining the tree-decomposition $\mathcal{T}^M$}
% $T^M$ and bags $\{B''_t\}_{t \in V(T^M)}$.}
Let $T^M$ be a copy of $T$. 
Again, we will denote the copy of a node $t \in V(T)$ in $T^M$ by $t^M$.

We define the bags $\{B''_{t^M}\}_{t^M \in V(T^M)}$ as follows.
\begin{itemize}
    \item For each $t \in V(T)$, we initialize $B''_{t^M} \coloneqq B_{t} \cap M$.
    \item Call a vertex $u\in U$ \emph{movable} if $\alpha\left(\NN(x)\setminus \NN(u)\right) \leq \ell-1$ and $\NN(x)\setminus \NN(u)\subseteq \NN(y)$.
    Let $U_{\mathrm m}$ be the set of all movable vertices.
    For each $t \in V(T)$, we add each movable vertex to the bag $B''_{t^M}$.
    \item For all nodes $t$ in $T$ that are on the path from $t_x$ to $t_y$ in~$T$, we add $x$ and all vertices from $U_y \setminus U_{\mathrm m}$ to $B''_{t^M}$. 
\end{itemize}

Our goal is to show that $\mathcal{T}^M \coloneqq (T^M, \{B''_t\}_{t \in V(T^M)})$ is a tree-decomposition of $G'[M]$ with $\alpha(\mathcal{T}^M) \leq 4\ell$.
To do so, we start with observing some properties of the movable vertices $u\in U_{\mathrm m}$.

\begin{claim}\label{cl:U0new}
    $U_0\subseteq U_{\mathrm m}$. 
\end{claim}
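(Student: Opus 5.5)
The plan is to verify the two defining conditions of movability directly for an arbitrary $u \in U_0$. The inclusion $\NN(x)\setminus\NN(u) \subseteq \NN(y)$ should come from \zcref{claimcollection2}. The independence bound $\alpha(\NN(x)\setminus\NN(u)) \le \ell-1$ should come from an extra $P_5$ argument together with $K_{\ell,\ell}$-freeness. The second of these is where I expect the real difficulty, and I have not yet closed it.

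\textbf{Step 1: the inclusion.} Fix $u\in U_0$. By \zcref{claimcollection2}\ref{claim2-4}, $u$ is complete to $W_x$, which is the set of vertices of $\NN(x)$ not in $\NN(y)$. Every vertex of $W_x$ lies outside $N[r]$, so it lies in $\NN(u)$. Hence $\NN(x)\setminus\NN(u) \subseteq W_{xy} \subseteq \NN(y)$. It therefore remains to show $\alpha(W_{xy}\setminus N(u)) \le \ell-1$.

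\textbf{Step 2: a private neighbour of $u$ sees all of $W_{xy}\setminus N(u)$.} Since $u\in U$, there is a neighbour $w_u$ of $u$ outside $M\cup\{r\}$; let $C$ be its component in $G'-M$. Take $w\in W_{xy}$ with $uw\notin E(G)$. If $w_u w\notin E(G)$, then $w_u,u,r,x,w$ is an induced $P_5$. The nonedges needed are:
\begin{itemize}
\item $w_u r$ and $w_u x$, because $w_u\notin M\cup\{r\}$;
\item $ux$, because $u\in U_0$;
\item $uw$, by assumption;
\item $rw$, because $w\notin N[r]$.
\end{itemize}
This is impossible, so $w_u$ is complete to $S\coloneqq W_{xy}\setminus N(u)$. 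In particular $S\cap N(C)\neq\emptyset$ whenever $S\neq\emptyset$. By \zcref{claimcollection2}\ref{claim2-7}, every vertex of $S$ is then complete to all of $C$. So $S$ is complete to $C\cup\{x,y\}$.

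\textbf{Step 3: the obstacle.} Suppose $S$ contains an independent set of size $\ell$. I want an independent set of size $\ell$ in the common neighbourhood of $S$, which would give an induced $K_{\ell,\ell}$. Natural candidates are $x$, $y$, an independent set inside $C$, and further private neighbours $w_{u'}$ of other vertices $u'\in U_0$, all of which behave as in Step 2. The difficulty is that $\alpha(C)$ need not be large. If this route fails, the fallback is to exploit how $(x,y)$ was chosen. Note that $u$ and $x$ are nonadjacent neighbours of $r$. If $(x,u)\in\mathcal{P}$, then maximality of the distance between $T(x)$ and $T(y)$, or the bad/not-bad dichotomy of pairs, should control $\alpha(\NN(x)\setminus\NN(u))$. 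If $(x,u)\notin\mathcal{P}$, one would look for a different argument based on the tree-decomposition. Closing this case is the main obstacle of the proof.
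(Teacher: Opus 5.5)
Your Steps~1 and~2 are correct, but the proof is not finished. The missing ingredient is the standing hypothesis of Case~2, which you never invoke: the pair $(x,y)$ is bad, i.e.\ $\alpha(W_x)=\alpha(\NN(x)\setminus\NN(y))\geq \ell$. In Step~3 you are looking for a set with independence number $\ell$ that is complete to $S=W_{xy}\setminus N(u)$. That set is $W_x$ itself, not $C$, $x$, $y$, or private neighbours of other vertices of $U_0$.

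To see that $W_x$ is complete to $S$, take $w_x\in W_x$ and $w\in S$ with $w_xw\notin E(G)$. Then $u,w_x,x,w,y$ is an induced $P_5$:
\begin{itemize}
\item $uw_x$ is an edge by \zcref{claimcollection2}\ref{claim2-4}, which you already used in Step~1;
\item $w_xx$, $xw$, $wy$ are edges by the definitions of $W_x$ and $W_{xy}$;
\item $ux$ and $uy$ are non-edges since $u\in U_0$, and $uw$ is a non-edge since $w\in S$;
\item $w_xy$ is a non-edge since $w_x\notin \NN(y)\cup N[r]$, and $xy$ is a non-edge since $(x,y)\in\mathcal{P}$;
\item $w_xw$ is a non-edge by assumption.
\end{itemize}
Since $W_x\cap S=\emptyset$ and $\alpha(W_x)\geq\ell$, an independent set of size $\ell$ in $S$ together with one in $W_x$ would induce $K_{\ell,\ell}$. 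Hence $\alpha(S)\leq \ell-1$, and together with your Step~1 this gives $u\in U_{\mathrm m}$.

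This is exactly the paper's argument. It makes your Step~2 (the private neighbour $w_u$ and the component $C$) unnecessary, and likewise the proposed fallback via the pair $(x,u)$ and the tree-decomposition. As you noticed yourself, the route through $C$ cannot work in general, because $\alpha(C)$ may be small. The independent set of size $\ell$ has to come from the badness of $(x,y)$.
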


\begin{subproof}
    By \zcref{claimcollection2}\ref{claim2-4}, it suffices to show that for each $u_0 \in U_0$ the set~$S$ of non-neighbors of $u_0$ in $\NN(x)\cap \NN(y)$ satisfies $\alpha(S) \leq \ell-1$.
    Note that for each $w_x \in W_x$ and $w_s \in S$, we have $w_x$ adjacent to $w_s$, as otherwise there is a copy of $P_5$ on vertices $u_0, w_x, x, w_s, y$, see \zcref{fig:anothereasyp5}.
    Then, since $\alpha(W_x) \geq \ell$, we have $\alpha(S)\leq \ell-1$, as otherwise there is a copy of~$K_{\ell,\ell}$.
\end{subproof}

\begin{figure}[h]
    \centering
    \includegraphics[page=8,scale=1.2]{figures/find_p5.pdf}
    \caption{A copy of $P_5$ in \zcref{cl:U0new}.}%
    \label{fig:anothereasyp5}
\end{figure}
   
\begin{claim}\label{claim:role_of_Uy}
$U_y\subseteq U_{\mathrm m}\cup B_{t_y}$.    
\end{claim}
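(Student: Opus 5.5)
Take an arbitrary $u \in U_y$ with $u \notin B_{t_y}$. The plan is to show that $u$ is movable by checking the two defining conditions separately. The condition $\NN(x)\setminus\NN(u)\subseteq\NN(y)$ will come directly from \zcref{claimcollection2}. The condition $\alpha(\NN(x)\setminus\NN(u)) \le \ell-1$ will come from the extremal choice of the pair $(x,y)$, applied to the pair $(x,u)$.

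\textbf{The inclusion.} By \zcref{claimcollection2}\ref{claim2-6}, $U_y$ is complete to $W_x$, where $W_x$ is the set $\NN(x)\setminus\NN(y)$. Let $w \in \NN(x)\setminus \NN(u)$. Then $w$ is not adjacent to $u$. Since $u$ is adjacent to every vertex of $\NN(x)\setminus\NN(y)$, the vertex $w$ cannot lie in that set, so $w \in \NN(y)$. This gives $\NN(x)\setminus\NN(u)\subseteq \NN(y)$.

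\textbf{The independence bound.} First we locate $T(u)$ in $T$.
\begin{itemize}
\item Since $uy\in E(G)$, the subtree $T(u)$ meets $T(y)$.
\item Recall that $t_y$ is the node of $T(y)$ closest to $T(x)$. Hence the path in $T$ from any node of $T(y)$ to any node of $T(x)$ passes through $t_y$.
\item $T(u)$ is connected, meets $T(y)$, and avoids $t_y$ by assumption. Therefore $T(u)$ lies entirely in a component of $T-t_y$ that does not meet $T(x)$.
\end{itemize}
Consequently $T(u)\cap T(x)=\emptyset$, and the distance between $T(u)$ and $T(x)$ in $T$ is strictly larger than the distance between $T(y)$ and $T(x)$. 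Both $u$ and $x$ lie in $N(r)$, they are distinct, and they share no bag, so $(x,u)\in\mathcal{P}$.

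Now use the choice of $(x,y)$. In Case~2 the pair $(x,y)$ is bad, and it was chosen to maximize the distance between $T(x)$ and $T(y)$ among bad pairs. The pair $(x,u)$ has strictly larger distance, so it cannot be bad. By the definition of a bad pair, this means $\alpha(\NN(x)\setminus\NN(u))\le \ell-1$.

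Combining the two conditions, $u$ is movable, so $u\in U_{\mathrm m}$. Hence every vertex of $U_y$ lies in $U_{\mathrm m}$ or in $B_{t_y}$, which is the claim.

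The only delicate step is the tree argument placing $T(u)$ strictly farther from $T(x)$ than $T(y)$ is. It relies on $t_y$ separating all of $T(y)$ from $T(x)$, which holds by the choice of $t_y$ as the closest node.
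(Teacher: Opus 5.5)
Your proof is correct and matches the paper's argument: you get the inclusion $\NN(x)\setminus\NN(u)\subseteq\NN(y)$ from \zcref{claimcollection2}\ref{claim2-6}, and the bound $\alpha(\NN(x)\setminus\NN(u))\le\ell-1$ from the fact that the bad pair $(x,y)$ maximizes the distance in $T$, applied to $(x,u)$. You are slightly more explicit than the paper on two points it leaves implicit: that $t_y$ separates all of $T(y)$ from $T(x)$, and that $(x,u)\in\mathcal{P}$.
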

\begin{subproof}
    Let $u_y\in U_y$.
    Suppose that $u_y \notin B_{t_y}$.
    Since $u_y$ is adjacent to $y$, the sets $T(u_y)$ and $T(y)$ have a nonempty intersection.
    As $t_y$ is the closest node to $t_x$ in $T$ whose bag contains~$y$, and $u_y \notin B_{t_y}$, it follows that the distance in $T$ between $T(x)$ and $T(y)$ is smaller than the distance in $T$ between $T(x)$ and $T(u_y)$.
    We chose the bad pair $(x,y)$ to maximize the distance between $T(x)$ and $T(y)$, so $(x,u_y)$ cannot be a bad pair.
    Hence, by definition, we have $\alpha(\NN(x) \setminus \NN(u_y)) \leq \ell-1$.
    Additionally, we have $W_x \subseteq \NN(u_y)$ by~\zcref{claimcollection2}\ref{claim2-6}.
    Thus, $u_y \in U_{\mathrm m}$.
\end{subproof}

    We observe that $\mathcal{T}^M$ is a tree-decomposition of $G'[M]$.
    It is clear that each vertex and each edge of the graph appears in some bag, as each bag $B''_{t^M}$ contains $B_{t} \cap M$.
    Now, let us argue that for each vertex of $M$, the nodes whose bags contain the vertex form a connected set.
    This is clear for all vertices, except possibly for vertices $u_y \in U_y \setminus U_{\mathrm m}$ added in the third step of the construction.
    However, \zcref{claim:role_of_Uy} says that these vertices belong to the bag $B_{t_y}$.
    Therefore, adding them to the bags on the path from $t^M_x$ to $t^M_y$ in $T^M$ does not violate connectedness.

\begin{claim}\label{lem:widetildeT^M}
    $\alpha(\mathcal{T}^M) \leq 4\ell$.
\end{claim}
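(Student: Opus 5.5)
The plan is to take an independent set $I\subseteq B''_{t^M}$ and bound it by splitting it along $M = N(r)\cup W_x\cup W_y\cup W_{xy}$, using that $(x,y)$ is bad. Badness gives $\alpha(W_x)=\alpha(\NN(x)\setminus\NN(y))\geq \ell$, so \zcref{claimcollection2}\ref{claim2-2} yields $\alpha(W_y)\leq \ell-1$. Since $r$ was chosen with $\alpha(N[r])<2\ell$, we always have $\abs{I\cap N(r)}<2\ell$ and $\abs{I\cap W_y}\leq \ell-1$. It therefore remains to control $I\cap(W_x\cup W_{xy}) = I\cap \NN(x)$ by roughly $\ell$.

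If $I\subseteq B_t$, then $\abs{I}\leq 4\ell$ by \ref{it:induction_independence}. Otherwise $I$ contains a vertex added in the second or third step, that is, $x$, a movable vertex, or a vertex of $U_y\setminus U_{\mathrm m}$. I would treat these in turn.
\begin{enumerate}
    \item If $x\in I$, then $I\cap \NN(x)=\emptyset$, so $\abs{I}<2\ell+(\ell-1)$.
    \item If some $u\in U_{\mathrm m}$ lies in $I$, then $I\cap\NN(u)=\emptyset$. Hence $I\cap\NN(x)\subseteq \NN(x)\setminus\NN(u)$, which has independence number at most $\ell-1$ by the definition of movability. This gives $\abs{I}<2\ell+2(\ell-1)$.
\end{enumerate}

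The remaining case is $u_y\in I$ with $u_y\in U_y\setminus U_{\mathrm m}$, at a node $t$ on the $t_x$--$t_y$ path, and with $x\notin I$. By \zcref{claimcollection2}\ref{claim2-6}, $u_y$ is complete to $W_x$, so $I\cap W_x=\emptyset$ and only $I\cap W_{xy}$ remains to be bounded. Since $W_x\subseteq \NN(u_y)$, we get $\NN(x)\setminus\NN(u_y)\subseteq W_{xy}\subseteq \NN(y)$. So non-movability of $u_y$ must come from $\alpha(W_{xy}\setminus\NN(u_y))\geq\ell$. On the other hand, $I\cap W_{xy}\subseteq W_{xy}\setminus \NN(u_y)$, because $u_y\in I$.

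In the spirit of \zcref{cl:U0new}, I would try to show that $W_x$ is complete to $S\coloneqq W_{xy}\setminus\NN(u_y)$. To do this, take $w_x\in W_x$ and $s\in S$ nonadjacent, together with a neighbor $w_u$ of $u_y$ outside $M$, and look for an induced $P_5$ built from $s,y,u_y,w_x,x$ and $w_u$. Once $W_x$ is complete to $S$, the bound $\alpha(W_x)\geq\ell$ and $K_{\ell,\ell}$-freeness give $\alpha(S)\leq\ell-1$, hence $\abs{I}<2\ell+2(\ell-1)\leq 4\ell$. However, $S$ and the lower bound $\alpha(S)\geq \ell$ coming from non-movability pull in opposite directions, which suggests a contradiction is more likely than a bound.

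If the completeness claim fails, the fallback is a tree-decomposition argument. Every common neighbor of $x$ and $y$ lies in every bag on the $t_x$--$t_y$ path, so $I\cap W_{xy}\subseteq B_t$. I would then bound $\abs{I}$ by $\abs{I\cap B_t}$ plus the new vertices, charging the new vertices (which lie in $N(r)$) against the old vertices of $I\cap N(r)$ inside the $2\ell$ budget.

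I expect this last case, $u_y\in U_y\setminus U_{\mathrm m}$, to be the main obstacle. There the $P_5$ case analysis for the pair $w_x,s$ is delicate, because $w_u$ may be adjacent to $s$ by \zcref{claimcollection1}\ref{claim1-1}. When that happens I would instead use \zcref{claimcollection2}\ref{claim2-7}, which makes the whole component of $w_u$ complete to $s$.
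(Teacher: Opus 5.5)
Your cases $I\subseteq B_t$, $x\in I$, and $I\cap U_{\mathrm m}\neq\emptyset$ are correct and match the paper. The remaining case $u_y\in I\cap(U_y\setminus U_{\mathrm m})$ has a genuine gap.

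\textbf{The main plan cannot work.} You want $W_x$ complete to $S=W_{xy}\setminus\NN(u_y)$. Together with $\alpha(W_x)\geq\ell$ and $\alpha(S)\geq\ell$, this would give an induced $K_{\ell,\ell}$. So your plan amounts to proving $U_y\setminus U_{\mathrm m}=\emptyset$, and that does not follow from local $P_5$-arguments. Here is a counterexample for $\ell\geq 4$. Let $r$ be adjacent to $x,y,u$, and let $yu$ and $uw$ be edges. Add independent sets $A,S$ of size $\ell$, with $A$ complete to $\{x,u\}$, $S$ complete to $\{x,y,w\}$, and no further edges.
\begin{itemize}
\item The graph is $P_5$-free: an induced $P_5$ uses at most one vertex from each twin class $A$, $S$, and the remaining $7$-vertex graph is easily checked.
\item It is $K_{\ell,\ell}$-free, since only $x,y,u,w$ have degree at least $4$.
\item Take the decomposition of $G-r$ with bags $\{x,u,w\}\cup A\cup S$ and $\{y,u,w\}\cup S$. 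Every hypothesis except \ref{it:induction_maximal_dist} holds, and your argument never uses \ref{it:induction_maximal_dist}.
\item $(x,y)$ is the chosen bad pair, with $W_x=A$, $W_{xy}=S$, $W_y=\emptyset$, and $u\in U_y\setminus U_{\mathrm m}$.
\end{itemize}
Yet $A$ is anticomplete to $S$. This is exactly your subcase $w_us\in E$. Invoking \zcref{claimcollection2}\ref{claim2-7} only adds adjacencies, so it cannot produce the $P_5$ you need.

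\textbf{The fallback does not close the case either.} The bound $\abs{I\cap B_t}\leq 4\ell$ already uses the whole budget, and nothing bounds $\abs{I\cap W_{xy}}$ more tightly. Counting therefore gives only $\abs{I}<2\ell+(\ell-1)+\abs{I\cap W_{xy}}$, or $\abs{I}\leq 4\ell+\abs{I\setminus B_t}$. The $2\ell$ budget on $N(r)$ does not let you trade new vertices of $U_y\setminus U_{\mathrm m}$ against old vertices of $I\cap W_{xy}$.

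\textbf{The missing idea is to put all of $I$ into the single original bag $B_{t_y}$.}
\begin{itemize}
\item If $I\cap W_{xy}=\emptyset$, then $\abs{I}<3\ell$ and you are done.
\item Otherwise take $w\in I\cap W_{xy}$ and a neighbor $w_u$ of $u_y$ outside $M\cup\{r\}$. Then $ww_u\in E(G)$, since otherwise $w,x,r,u_y,w_u$ is an induced $P_5$. This is your observation, but it needs no vertex of $W_x$.
\item Hence $I\cap(N(r)\setminus U)=\emptyset$, since otherwise $z,r,u_y,w_u,w$ is an induced $P_5$.
\item Since $I\cap U_{\mathrm m}=\emptyset$, also $I\cap U_0=\emptyset$ by \zcref{cl:U0new}. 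Further, $I\cap U_x=\emptyset$ by \zcref{claimcollection2}\ref{claim2-3}. Hence $I\subseteq N(y)$.
\item Every vertex of $I\cap B_t$ is a neighbor of $y$ lying in the bag of a node on the $t_x$--$t_y$ path, so it lies in $B_{t_y}$.
\item $U_y\setminus U_{\mathrm m}\subseteq B_{t_y}$ by \zcref{claim:role_of_Uy}.
\end{itemize}
Thus $I\subseteq B_{t_y}$, and $\abs{I}\leq\alpha(\mathcal{T})\leq 4\ell$.
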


\begin{subproof}
    %\vasek{add figure}      
    Consider an independent set $I$ contained in a bag $B''_{t^M}$ of $\mathcal{T}^M$.
    We distinguish several cases:
    \begin{enumerate}
        [label=(\arabic*)]
        \item $I \subseteq B_t$,
        \item $I$ contains a vertex from $U_{\mathrm m}$,            
        \item $x \in I$,
        \item $I$ contains a vertex from $U_y \setminus U_{\mathrm m}$.
    \end{enumerate}
    By the definition of the bag $B''_{t^M}$, these four cases are exhaustive, though not necessarily disjoint. 
    Furthermore, the last two cases can only occur if $t$ is on the path from $t_x$ to $t_y$ in $T$.    
    
    \medskip
    \noindent\emph{Case (1).} In this case the claim follows directly from the assumption that $\alpha(\mathcal{T}) \leq 4\ell$.

    \medskip
    \noindent\emph{Case (2).}  
    We have $I \subseteq M = N(r) \cup \NN(x) \cup W_y$.
    Let us bound the size of the intersection of $I$ with each of these three sets separately.
    We have $|I \cap N(r)| < 2\ell$ due to the choice of $r$.
    Let $u_{\mathrm m} \in I \cap U_{\mathrm m}$.
    As $u_{\mathrm m} \in I$, we have $I \cap \NN(u_{\mathrm m}) = \emptyset$ and thus, by the definition of movable vertices, we have
    \[
        |I\cap \NN(x)| = |I\cap (\NN(x) \setminus \NN(u_{\mathrm m}))| \leq \ell - 1\,.
    \]
    Finally, we have $|I\cap W_y| \leq \ell - 1$ by~\zcref{claimcollection2}\ref{claim2-2} and the fact that $(x,y)$ is a bad pair.
    Summing up, we obtain
    \[
        |I| = |I\cap N(r)| + |I\cap \NN(x)| + |I\cap W_y| < 2\ell + (\ell-1) + (\ell-1) \leq 4\ell 
        \,.
    \]

    For the last two cases, recall that we are considering nodes $t^M$ such that $t$ is on the path from $t_x$ to $t_y$ in $T$.
    
    \medskip
    \noindent\emph{Case (3).} As $x \in I$, we have $I \subseteq M\setminus N(x) \subseteq N(r) \cup W_y$.
    Similarly as in Case (2), we observe that $|I \cap N(r)| < 2\ell$ by the choice of $r$ and $\alpha(W_y) \setminus \NN(x)) \leq \ell - 1$ by~\zcref{claimcollection2}\ref{claim2-2} and $(x,y)$ being bad.
    Thus, we obtain
    \[
        |I| = |I \cap N(r)| + |I \cap W_y| < 2\ell + (\ell-1) \leq 3\ell 
        .
    \]

    \medskip
    \noindent\emph{Case (4).}  
    Let $u_y \in I \cap (U_y \setminus U_{\mathrm m})$.
    By \zcref{claimcollection2}\ref{claim2-6}, $u_y$ is complete to $W_x$.
    Thus, $I \subseteq M\setminus N(u_y) \subseteq N(r) \cup \NN(y)$. 
    
    If $I \cap W_{xy} = \emptyset$, we have $I \subseteq N(r) \cup W_y$, implying
    \[
        |I| = |I \cap N(r)| + |I \cap W_y|< 2\ell + (\ell-1) \leq 3\ell\,,
    \]
    as in Case~(3).
    
    Otherwise, there is a vertex $w_{xy} \in I \cap W_x$.
    Consider an arbitrary $w_{u_y} \in \NN(u_y) \setminus W_x$; such a vertex exists by the definition of the set $U$.
    Note that $w_{xy}$ is nonadjacent to $u_y$, as both of these vertices belong to $I$.  
    This implies that $w_{xy}$ is adjacent to $w_{u_y}$, as otherwise there is an induced $P_5$ of the form $w_{xy}, x, r, u_y, w_{u_y}$, see \zcref{fig:wxy_neighbors_wuy}.
    \begin{figure}[h]
        \centering
        \begin{minipage}{.42\textwidth}
            \centering
            \includegraphics[page=16,scale=1.2]{figures/find_p5.pdf}
            \captionof{figure}{The $P_5$ in the Case~(4) if $w_{xy}$ is not adjacent to $w_{u_y}$.}%
            \label{fig:wxy_neighbors_wuy}
        \end{minipage}
        \qquad\quad~
        \begin{minipage}{.42\textwidth}
            \centering
            \includegraphics[page=9,scale=1.2]{figures/find_p5.pdf}
            \captionof{figure}{The $P_5$  in the Case~(4)  if $I$ intersects $N(r) \setminus U$.}
            \label{fig:I_disjoint_from_N(r)_minus_U}
        \end{minipage}
    \end{figure}

    Now let us argue that $I$ is disjoint from $N(r)\setminus U$.         
    Suppose otherwise and let $z \in I \cap (N(r) \setminus U)$.
    Then we have a $P_5$ of the form $z,r,u_y,w_{u_y},w_{xy}$, see \zcref{fig:I_disjoint_from_N(r)_minus_U}.
    Note that all $z,u_y,w_{xy}$ belong in $I$ (as we assume), so they span no edge, and moreover $z$ is not adjacent to $w_{u_y}$, as $\NN(z) \subseteq \NN(x) \cup \NN(y)$ because $z \notin U$.

    \medskip
    The location of the set $I$ is now very restricted: since $I$ is contained in $N(r) \cup \NN(y)$ and   disjoint from $N(r)\setminus U$,     we have $I \subseteq U \cup \NN(y)$.
    Now we restrict the location even further.
    First, we may assume that Case~(2) does not apply, hence $I \cap U_{\mathrm m} = \emptyset$.
    In particular, this implies $I \cap U_0 = \emptyset$ by \zcref{cl:U0new}.
    Moreover, using \zcref{claimcollection2}\ref{claim2-1}, we have $I \cap U_x = \emptyset$.
    Therefore, we have
    \begin{equation}
        I \subseteq U_{xy} \cup (U_y \setminus U_{\mathrm m}) \cup \NN(y) \subseteq N(y)\,. \label{eq:Icontained-Case4}
    \end{equation}
    
    Recall that $I\subseteq B''_{t^M}$ for some node $t$ on the path from $t_x$ to $t_y$ in $T$.
    We complete the proof of Case~(4) in two steps: first, by showing that $I\subseteq  B''_{t_y^M}$ and second, by showing that $I \subseteq B_{t_y}$. 
    
    The first inclusion is clear for $t = t_y$, so we may assume that $t\neq t_y$.
    Note that \[B''_{t^M} = (B_t \cap M)\cup U_y \cup U _{\mathrm m} \cup \{x\}\,.\]    Since $U_y \cup U _{\mathrm m} \cup \{x\} \subseteq B''_{t^M_y}$, it is sufficient to argue that $I \cap B_t \cap M \subseteq B_{t_y}$, since $B_{t_y}\cap M \subseteq B''_{t^M_y}$.
    Consider any vertex $v \in I \cap B_t \cap M$ and recall that $v \in N(y)$ by \eqref{eq:Icontained-Case4}.
    As $T(v)$ contains $t$ and intersects $T(y)$, we must have $v \in B_{t_y}$ since for all nodes~$t'$ on the unique path between~$t$ and~$t_y$ we have that~${v \notin B_{t'}}$.
    This, together with considerations above, shows that $I \subseteq B''_{t^M_y}$.
    
    It remains to show that $I \subseteq B_{t_y}$. 
    Again we will use \eqref{eq:Icontained-Case4}.
    The vertices of $U_{xy}$ are adjacent to both $x$ and $y$, so in the tree-decomposition $\mathcal{T}$ the subtree containing any fixed vertex from this set contains both $t_x$ and $t_y$, which in particular implies $U_{xy} \subseteq B_{t_y}$.
    By \zcref{claim:role_of_Uy} we immediately get $U_y \setminus U_{\mathrm m} \subseteq B_{t_y}$.
    Lastly, since every vertex added in the second and third step of the construction of the bag $B''_{t^M_y}$ belongs to ${U \cup \{x\}}$, no such added vertex belongs to $\NN(y)$.
    Hence, we have that $B''_{t_y^M} \cap \NN(y) = B_{t_y} \cap \NN(y)$.
    Together with $I \subseteq B''_{t^M_y}$, this implies that $I\cap  \NN(y)\subseteq B''_{t_y^M} \cap \NN(y) \subseteq B_{t_y}$. 
    Putting these together with \eqref{eq:Icontained-Case4}, we obtain that  $I \subseteq B_{t_y}$, therefore, $|I| \leq \alpha(\mathcal{T}) \leq 4\ell$.

    This completes the proof of Case (4) and the whole \zcref{lem:widetildeT^M}.
\end{subproof}

Before proceeding to the next step, let us prove one more claim that will be useful later.

\begin{claim}\label{claim:widetildeB_t_x_contains_U0_Uy_NxcapNy}
	The bag ${B}''_{t^M_x}$ contains $U_0 \cup U_y \cup (N(x) \cap N(y))$.
    	% $U_0 \cup U_y \cup (N(x) \cap N(y))\subseteq {B}''_{t^M_x}$.
\end{claim}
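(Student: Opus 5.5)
We treat the three parts of the union separately, checking each against the definition of $B''_{t^M_x}$. That bag consists of $B_{t_x}\cap M$, all movable vertices $U_{\mathrm m}$ (added to every bag), and, since $t_x$ lies on the path from $t_x$ to $t_y$, the vertex $x$ together with $U_y\setminus U_{\mathrm m}$.

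\emph{The set $U_0$.} By \zcref{cl:U0new} we have $U_0\subseteq U_{\mathrm m}$. Movable vertices were added to every bag of $T^M$, in particular to $B''_{t^M_x}$.

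\emph{The set $U_y$.} Every $u_y\in U_y$ lies in $U_{\mathrm m}$ or not. If it does, it is in the bag as above. If it does not, it belongs to $U_y\setminus U_{\mathrm m}$. That set was explicitly added to all bags $B''_{t^M}$ with $t$ on the path from $t_x$ to $t_y$, and this path includes the endpoint $t_x$. So $U_y\subseteq B''_{t^M_x}$. We do not even need \zcref{claim:role_of_Uy} here.

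\emph{The set $N(x)\cap N(y)$.} Take $v\in N(x)\cap N(y)$ in $G'$. Since $v$ is adjacent to both $x$ and $y$, the subtree $T(v)$ meets both $T(x)$ and $T(y)$. The nodes $t_x$ and $t_y$ are the endpoints of the unique shortest path in $T$ between $T(x)$ and $T(y)$. Any connected subtree meeting both $T(x)$ and $T(y)$ must contain this whole path, and in particular $t_x$. Hence $v\in B_{t_x}$.

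It remains to check $v\in M$, so that $v$ survives the intersection $B_{t_x}\cap M$. Since $r$ is removed in $G'$, either $v\in N(r)\subseteq M$, or $v\notin N[r]$. In the latter case $v\in\NN(x)\cap\NN(y)=W_{xy}\subseteq M$. Either way $v\in B_{t_x}\cap M\subseteq B''_{t^M_x}$.

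The only point needing care is the subtree argument that a common neighbour of $x$ and $y$ lies in the bag at $t_x$. The paper already used the same fact for $U_{xy}$ and $W_{xy}$ at $t_y$, and the argument is symmetric, so no real obstacle is expected.
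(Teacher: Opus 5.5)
Your proof is correct and follows the paper's argument: $U_0\subseteq U_{\mathrm m}$ gives $U_0$, the second and third construction steps give $U_y$, and the subtree argument puts every common neighbour of $x$ and $y$ into $B_{t_x}$. Your extra check that such a vertex (taken in $G'$) lies in $M$ is a welcome bit of care. The paper's phrase $B_{t_x}\subseteq B''_{t^M_x}$ silently relies on this, since only $B_{t_x}\cap M$ is copied into the bag.
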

\begin{subproof}
    As $\mathcal{T}$ is a tree-decomposition of $G'$, for every vertex $v$ that is adjacent to $x$ and $y$, the subtree $T(v)$ spans both $t_x$ and $t_y$, so $N(x) \cap N(y)  \subseteq B_{t_x} \subseteq B''_{t^M_x}$.
	Moreover, $B''_{t^M_x}$ contains $U_0$ and $U_y$ by \zcref{cl:U0new} and the second and third step in the definition of $\mathcal{T}^M$.
\end{subproof}

\paragraph{Defining bags $\{B'_t\}_{t \in V(T^M)}$}
From now on, we use the notation $T^M(v)$ with respect to these final bags, i.e., by $T^M(v)$ for some vertex $v$, we mean the set of nodes $t^M \in V(T^M)$ such that $v \in B'_{t^M}$.

\begin{itemize}
    \item Let $c \in V(G')\setminus M$ be a vertex whose neighborhood in $M$ is not contained in a single bag of $\mathcal{T}^M$.
    We add $c$ to the bags of the minimum number of nodes so that $T^M(c)$ is a connected subtree and each vertex of $N(c)\cap M$ is contained in the bag of some node of $T^M(c)$.
\end{itemize}

In other words, for every $a,b \in N(c) \cap M$ with disjoint $\{t \in V(T^M) ~|~ a \in B''_{t}\}$ and $\{t \in V(T^M) ~|~ b \in B''_{t}\}$, we add $c$ to all the bags of all nodes on the path between $\{t \in V(T^M) ~|~ a \in B''_{t}\}$ and $\{t \in V(T^M) ~|~ b \in B''_{t}\}$.

Effectively, this partitions each component $C$ of $G' - M$ into the set $C'$ of vertices of $C$ that appear in $\bigcup_{t^M \in V(T^M)} B'_{t^M}$ (that is, the vertices of $C$ that were added to the bags in the above step), and the rest, $C'' \coloneqq V(C) \setminus C'$.
We define $M' \coloneqq M \cup \bigcup_C C'$.

The main goal of this part is to show that ${\mathcal{T}}^{M'} \coloneqq (T^M,\{B'_t\}_{t \in V(T^M)})$ is a tree-decomposition of $G'[M']$ with independence number at most  $4\ell$, that additionally satisfies some useful properties.
Namely, the crucial observation will be \zcref{cl:connectedcomponentscommonbag} that shows that the neighborhood of each component $D$ of $G'-M'$ is contained in a bag of a single node.
This node will be the natural attachment point for the tree-decomposition of $D$ constructed in the next section.

Let us start with two simple observations.

\begin{claim}\label{obs:trees_in_T^M_vs_T}
    For each $c \in \bigcup_C C'$ and each $t \in V(T)$, it holds that if $c \in B'_{t^M}$, then $c \in B_{t}$.
\end{claim}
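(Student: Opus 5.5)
The plan is to use the minimality in the construction of $T^M(c)$ together with the structure of $G'[M]$ and the tree-decomposition $\mathcal{T}$ of $G'$. Fix $c \in C'$ for some component $C$ of $G'-M$. By construction, $T^M(c)$ is the minimal subtree of $T^M$ that meets $T''(a)$ for every $a \in N(c)\cap M$, where $T''(a)$ denotes the set of nodes whose $B''$-bag contains $a$. Equivalently, $T^M(c)$ is the union of the paths in $T^M$ between the pairwise disjoint subtrees $T''(a)$ and $T''(b)$, over $a,b\in N(c)\cap M$. So it suffices to show the following: if $t^M$ lies on the path between $T''(a)$ and $T''(b)$ for $a,b\in N(c)\cap M$, then $t\in T(c)$.

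The first step is to locate $N(c)\cap M$. By \zcref{claimcollection1}\ref{claim1-1}, $N(c)\cap M\subseteq U\cup W_{xy}$. For $a\in W_{xy}$ the set $T''(a)$ equals $T(a)$ copied to $T^M$, since only vertices of $U\cup\{x\}$ were added. For $a\in U_{\mathrm m}$ the set $T''(a)$ is all of $T^M$, so such vertices never force $c$ anywhere: the path from $T''(b)$ to $T''(a)$ is empty. The remaining case is $a\in U\setminus U_{\mathrm m}$. By \zcref{cl:U0new} such an $a$ lies in $U_x\cup U_y\cup U_{xy}$, and $T''(a)$ is $T(a)$ possibly extended along the $t_x$–$t_y$ path when $a\in U_y\setminus U_{\mathrm m}$. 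In that case \zcref{claim:role_of_Uy} gives $a\in B_{t_y}$, so $T''(a)$ is $T(a)$ together with the path from $t_y$ toward $t_x$.

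The main step is to compare the path in $T^M$ between $T''(a)$ and $T''(b)$ with $T(c)$. Since $ca, cb\in E(G')$, the subtree $T(c)$ meets both $T(a)\subseteq T''(a)$ and $T(b)\subseteq T''(b)$. In a tree, if connected subtrees $S_a\supseteq T(a)$ and $S_b\supseteq T(b)$ are disjoint, then the path between $S_a$ and $S_b$ is contained in any connected set meeting both $T(a)$ and $T(b)$. This is because any $T(a)$–$T(b)$ path passes through the whole $S_a$–$S_b$ connecting path. Hence that path lies in $T(c)$, and $c\in B_t$ for every node on it. Minimality means no other nodes receive $c$, so the claim follows.

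The expected obstacle is making the ``union of paths equals minimal subtree'' description precise when the $T''(a)$ pairwise intersect in complicated patterns. Minimal connected subtrees meeting a family of subtrees could a priori include nodes not on any pairwise connecting path. I would handle this by noting that the minimal subtree meeting all $T''(a)$ is contained in the intersection of all subtrees meeting every $T''(a)$. Then I would show directly that $T(c)$, viewed in $T^M$, is one such subtree. Indeed, $T(c)$ meets $T(a)\subseteq T''(a)$ for each $a\in N(c)\cap M$. Thus the minimal choice may be taken inside $T(c)$. If the minimum is not unique, I would argue that the construction's choice (the union of connecting paths) is contained in every connected set meeting all $T''(a)$, in particular in $T(c)$.
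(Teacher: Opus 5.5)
Your proposal is correct and follows essentially the same route as the paper: because $B_t\cap M\subseteq B''_{t^M}$, the copy of $T(c)$ in $T^M$ is connected and meets every $T''(a)$, so by the tree structure it contains the minimal connecting subtree $T^M(c)$. The case analysis placing $N(c)\cap M$ inside $U\cup W_{xy}$ is not needed, since the argument only uses $T(a)\subseteq T''(a)$.
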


\begin{subproof}
    Note that for any $c \in M' \setminus M$, any $a \in M$ adjacent to $c$, the trees $T(a)$ and $T(c)$ intersect.
    Furthermore, for every $t \in V(T)$ we have 
    \[
        B_{t} \cap M \subseteq B''_{t^M} = B'_{t^M} \cap M\,.
    \]
    This implies that for every $t \in V(T)$ such that $t\in T(a)$, we have $t^M\in T^M(a)$.
     In particular, since the trees $T(a)$ and $T(c)$ intersect, this implies that $\{t^M ~|~ t \in T(c) \}$ intersects $T^M(a)$.

    Since $c$ was added to the bags of the minimum set of nodes that meets $T^M(a)$ for every $a\in N(c) \cap M$, and $T^M$ is a tree, we conclude that $T^M(c) \subseteq \{t^M ~|~ t \in T(c) \}$.
\end{subproof}

\begin{claim}\label{obs:extended_bags_in_T^M_left_of_x}
    For a node $t \in V(T)$, if $B'_{t^M} \neq B''_{t^M}$,
    then either $t = t_x$ or $t$ belongs to a component of $T - t_x$ that does not contain $t_y$.
\end{claim}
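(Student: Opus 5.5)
We prove the statement by showing that every set $T^M(a)$ with $a \in N(c)\cap M$ (taken with respect to the intermediate bags $B''$) meets a fixed connected part of $T^M$ lying on the far side of $t_x^M$ from $t_y^M$, and then projecting onto that part. Let
\[
L \coloneqq \{t_x\} \cup \bigcup \{ V(K) \mid K \text{ a component of } T - t_x \text{ with } t_y \notin V(K)\},
\]
and let $L^M$ be its copy in $T^M$. The set $L$ induces a subtree of $T$. The claim says exactly that only nodes of $L^M$ receive new vertices in the step producing the bags $B'$. Since $t_x$ is the node of $T(x)$ closest to $t_y$ and $T(x)$ is connected, we have $T(x) \subseteq L$.

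\textbf{Step 1: each relevant subtree meets $L^M$.} Fix $c \in V(G')\setminus M$ that is added to some bags, and let $a \in N(c)\cap M$. We show that $\{s \in V(T^M) \mid a \in B''_s\}$ meets $L^M$. By \zcref{claimcollection1}\ref{claim1-1}, $a \in U \cup W_{xy}$, and we split into cases.
\begin{itemize}
\item If $a \in W_{xy} \cup U_{xy}$, or more generally $a \in N(x)\cap N(y)$, then $a \in B''_{t_x^M}$.
\item If $a \in U_0 \cup U_y$, then $a \in B''_{t_x^M}$ by \zcref{claim:widetildeB_t_x_contains_U0_Uy_NxcapNy}.
\item If $a \in U_{\mathrm m}$, then $a$ lies in every bag of $T^M$, so again $a \in B''_{t_x^M}$.
\item The remaining case is $a \in U_x \setminus U_{\mathrm m}$. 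Such $a$ is adjacent to $x$, so $T(a)$ meets $T(x) \subseteq L$. Moreover $B_t \cap M \subseteq B''_{t^M}$ for every $t$, so the copy of $T(a)$ lies inside $T^M(a)$. Hence $T^M(a)$ meets $L^M$.
\end{itemize}

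\textbf{Step 2: projection onto $L^M$.} Let $\pi$ be the map sending each node of $T^M$ to its closest node in $L^M$. This is well defined because $L^M$ is a subtree, and $\pi$ maps every subtree to a subtree. Since $L^M$ is attached to the rest of $T^M$ only through $t_x^M$, $\pi$ sends every node outside $L^M$ to $t_x^M$.

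Now let $S$ be any subtree meeting every $T^M(a)$ for $a \in N(c)\cap M$. Then $\pi(S)$ is a subtree of $L^M$ with $\abs{\pi(S)} \le \abs{S}$. We check that $\pi(S)$ still meets every $T^M(a)$.
\begin{itemize}
\item If $S$ meets $T^M(a)$ inside $L^M$, then $\pi$ fixes that node, so $\pi(S)$ meets $T^M(a)$.
\item Otherwise $S$ meets $T^M(a)$ at a node outside $L^M$. Then $S$ contains a node outside $L^M$, so $t_x^M \in \pi(S)$. Since $T^M(a)$ is connected and also meets $L^M$, it contains $t_x^M$. So $\pi(S)$ meets $T^M(a)$ at $t_x^M$.
\end{itemize}

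Thus a minimum subtree meeting all the sets $T^M(a)$ can be found inside $L^M$. In a tree the minimum connected set meeting a family of subtrees is unique: when the subtrees pairwise intersect it is a single node, which does not occur here because $c$ is added only if no single bag contains $N(c)\cap M$. Otherwise it is the union of the shortest paths between the subtrees. So the nodes to which $c$ is added all lie in $L^M$. As $c$ was arbitrary, $B'_{t^M}\ne B''_{t^M}$ forces $t \in L$, which is the statement.

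\textbf{Main obstacle.} The delicate point is the case $a \in U_x\setminus U_{\mathrm m}$ in Step 1, since these vertices do not belong to $B''_{t_x^M}$. We handle it with the adjacency $a x \in E(G')$, which forces $T(a)$ to meet $T(x) \subseteq L$. A secondary point is that the minimal set of added nodes is unique, or at least can be chosen inside $L^M$; the projection argument in Step 2 settles this.
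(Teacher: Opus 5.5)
Your proof is correct and follows essentially the same route as the paper: you show that every subtree $T^M(a)$ with $a \in N(c)\cap M$ meets the connected part $T^M - T^M_y$ (your $L^M$), where the only nontrivial case is $a \in U_x \setminus U_{\mathrm m}$, handled via $ax \in E(G')$. You then conclude that the minimal connector whose bags receive $c$ stays inside that part; your Step~2 (projection onto $L^M$ plus uniqueness of the minimal connector) makes rigorous the paper's last step, which is phrased as $\bigcup_{v \in N(c)\cap M} T^M(v) \subseteq T^M - T^M_y$ but really needs only that each $T^M(v)$ meets $T^M - T^M_y$.
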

\begin{subproof}
    Let $T_y$ be the component of $T - t_x$ that contains $t_y$ and let $T^M_y$ be the corresponding subgraph of $T^M$.
    Let $t \neq t_x$ be a node of $T$ such that $B'_{t^M} \neq B''_{t^M}$.    
    This means that there is some component $C$ of $G'-M$ and a vertex $c$ of $C$ such that $c \in B'_{t^M} \setminus B''_{t^M}$.

    Let us analyze which bags contain neighbors of $c$.    
	Recall that by \zcref{claim:widetildeB_t_x_contains_U0_Uy_NxcapNy} we have $U_0 \cup U_y \cup (N(x)\cap N(y)) \subseteq {B}''_{t^M_x}$.
    Due to \zcref{claimcollection1}\ref{claim1-1}, we have $N(C) \subseteq U \cup W_{xy}$.
    Since $U_{xy} \subseteq N(x) \cap N(y) \subseteq {B}''_{t^M_x}$, we conclude that 
    the only possible neighbors of $c$ outside of ${B}''_{t^M_x}$ belong to $U_x \setminus B''_{t^M_x}$.
    Note that in particular they are in $U_x \setminus U_{\mathrm m}$,
    as $U_{\mathrm m}$ is contained in $B'_{t^M_x} \subseteq B''_{t^M_x}$.    
    
    Each $u_x \in U_x$ is adjacent to $x$ in $G'$, so $T(u_x) \cap T(x) \neq \emptyset$.
    Recall that $t_x$ was chosen to be the node of $T(x)$ closest to $T(y)$, so $T(x)$ does not intersect $T_y$.
    Thus, $T(u_x) \cap T(x)$ also does not intersect $T_y$.
    Moreover, if $T(u_x)$ does not contain $t_x$, then  $T(u_x)$ is contained in one of the components of $T - t_x$ other than $T_y$.
    
    Recall that in the construction of the tree-decomposition $\mathcal{T}^M$,
    none of the vertices $u_x \in U_x \setminus U_{\mathrm m}$ was included in the bags of any node except for their corresponding nodes of $\mathcal{T}$.    
    Thus, the nodes of $T^M(u_x)$ are precisely the copies of nodes of $T(u_x)$.
    In particular, $T(u_x)$ does not contain $t_x$ if and only if $T^M(u_x)$ does not contain $t^M_x$. 
    
    Hence, for each $u_x \in U_x \setminus B''_{t^M_x}$, the set $T^M(u_x)$ is contained in a component of $T^M - t^M_x$ distinct from $T^M_y$.
    Consequently, we have that $\bigcup_{v \in N(c) \cap M} T^M(v) \subseteq T^M - T^M_y$.
    Since $T^M - T^M_y$ is connected, we never added $c$ to the bag of any node in $T^M_y$.
\end{subproof}

Now we may proceed to \zcref{cl:connectedcomponentscommonbag} that we already announced.
\begin{claim}\label{cl:connectedcomponentscommonbag}
    Let $C$ be a component of $G' - M$.
    Then, there is a node $t^M_C \in V(T^M)$ such that $C' \subseteq B'_{t^M_C}$ and $N(C'') \cap M \subseteq B'_{t^M_C}$.
\end{claim}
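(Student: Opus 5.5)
We prove \zcref{cl:connectedcomponentscommonbag} by using the fact that all vertices of~$C$ have almost the same neighborhood in~$M$, and then applying the Helly property of subtrees of $T^M$.

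\emph{Step 1: the common part of the neighborhoods.} By \zcref{claimcollection1}\ref{claim1-1} we have $N(C)\subseteq U\cup W_{xy}$. By \zcref{claimcollection1}\ref{claim1-2} and \zcref{claimcollection2}\ref{claim2-7}, $C$ is complete to $A\coloneqq N(C)\cap(U_0\cup U_x\cup U_y\cup W_{xy})$. Hence every $c\in C$ satisfies $N(c)\cap M = A\cup (N(c)\cap U_{xy})$, and only the second part depends on~$c$.

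Every vertex of $U_{xy}\cup W_{xy}$ is adjacent to both $x$ and $y$. So its subtree in $T$ contains the whole $t_x$--$t_y$ path, and therefore its subtree in $T^M$ contains~$t^M_x$. \zcref{claim:widetildeB_t_x_contains_U0_Uy_NxcapNy} also places $U_0\cup U_y$ in $B''_{t^M_x}$, and $U_{\mathrm m}$ lies in every bag. Thus the only members of $A$ whose subtrees may avoid $t^M_x$ lie in $U_x\setminus U_{\mathrm m}$.

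Let $\mathcal F=\{T^M(a) : a\in A\}$, where the subtrees are taken with respect to the bags $B''$. Each member of $\mathcal F$ is a subtree of $T^M$.

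\emph{Case (i): $\bigcap\mathcal F$ is nonempty.} Then $\bigcap\mathcal F$ is a subtree of $T^M$. Let $t^M_C$ be its node closest to $t^M_x$; this is the gate of $\bigcap\mathcal F$ seen from $t^M_x$.

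Take $c\in C''$. Its neighborhood in $M$ lies in a single bag, so by Helly there is a node of $\bigcap\mathcal F$ lying in $T^M(u)$ for every $u\in N(c)\cap U_{xy}$. Each such $T^M(u)$ contains $t^M_x$ and meets $\bigcap\mathcal F$. Since $T^M$ is a tree, $T^M(u)$ therefore contains the path from $t^M_x$ into $\bigcap\mathcal F$, and this path ends at $t^M_C$. Hence $N(c)\cap M\subseteq B'_{t^M_C}$, which gives $N(C'')\cap M\subseteq B'_{t^M_C}$.

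Take $c\in C'$. Some $T^M(u)$ with $u\in N(c)\cap U_{xy}$ is disjoint from $\bigcap\mathcal F$, yet contains $t^M_x$. The gate of $\bigcap\mathcal F$ seen from the connected set $T^M(u)$ is then again $t^M_C$. The minimal connected set of nodes to which $c$ was added must meet $T^M(u)$ and every member of $\mathcal F$. We will argue that such a set must pass through $t^M_C$: any connected set meeting $T^M(u)$ and all of $\mathcal F$ has to reach $\bigcap\mathcal F$, or at least separate its members, through the gate. Thus $c\in B'_{t^M_C}$, which gives $C'\subseteq B'_{t^M_C}$.

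\emph{Case (ii): $\bigcap\mathcal F=\emptyset$.} Then no $c\in C$ has its $M$-neighborhood inside a single bag, so $C''=\emptyset$ and the second condition of the claim is vacuous. By Helly, $\mathcal F$ contains two disjoint subtrees $T^M(a)$ and $T^M(b)$. Every $T^M(c)$ with $c\in C$ is connected and meets both of them, so it contains the whole path between them. Let $t^M_C$ be any node on that path; then $C=C'\subseteq B'_{t^M_C}$.

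The step I expect to be the main obstacle is the gate argument in Case~(i) for $c\in C'$. One must show that the minimal connecting subtree of $c$ really contains $t^M_C$, and is not just near $\bigcap\mathcal F$, with some care when $t^M_C=t^M_x$ itself. If that argument becomes unwieldy, the fallback is to use \zcref{obs:extended_bags_in_T^M_left_of_x} to restrict all extended bags to $t^M_x$ and the components of $T^M-t^M_x$ not containing $t^M_y$. In that restricted region everything relevant passes through $t^M_x$, which simplifies the case analysis.
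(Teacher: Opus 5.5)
Your proposal is correct and is essentially the paper's proof. The paper also splits on whether $N(C)\setminus U_{xy}$ (your $A$) lies in a single bag. In the first case it picks a node on the path between two disjoint subtrees $T^M(a),T^M(b)$. Otherwise it takes $t^M_C$ to be the node of $\bigcap\mathcal F$ closest to $t^M_x$. The only cosmetic difference is that the paper handles $C''$ contrapositively, showing that every $c$ with a neighbor outside $B'_{t^M_C}$ lies in $C'\cap B'_{t^M_C}$.

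The step you flagged is not a real obstacle, but as written it is only asserted, so you should close it. The argument is the paper's $(\star)$:

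\begin{itemize}
\item Take $c\in C'$. Since all $T^M(u)$ with $u\in N(c)\cap U_{xy}$ share $t^M_x$, Helly applied to $\{\bigcap\mathcal F\}\cup\{T^M(u)\}$ gives some such $u$ with $T^M(u)\cap\bigcap\mathcal F=\emptyset$.
\item Because $t^M_x\in T^M(u)$ and $t^M_C\in\bigcap\mathcal F$, we get $t^M_C\neq t^M_x$. So the case $t^M_C=t^M_x$ you worried about cannot occur here.
\item Let $s$ be the neighbor of $t^M_C$ on the path to $t^M_x$. Minimality of $t^M_C$ gives $s\notin\bigcap\mathcal F$. Hence some $a\in A$ has $t^M_C\in T^M(a)$ but $s\notin T^M(a)$, so $T^M(a)$ lies entirely on the $t^M_C$-side of the edge $t^M_C s$.
\item $T^M(u)$ is connected, contains $t^M_x$, and avoids $t^M_C$, so it lies entirely on the $s$-side.
\item $T^M(c)$ is connected and meets both $T^M(a)$ and $T^M(u)$, since $a,u\in N(c)\cap M$. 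Therefore it uses the edge $t^M_C s$, and in particular $c\in B'_{t^M_C}$.
\end{itemize}

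Minimality of $T^M(c)$ is not even needed, and neither is the fallback through the claim on where extended bags can lie.
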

\begin{subproof}    
    We first consider the case that $N(C) \setminus U_{xy}$ is not contained in a bag of a single node of ${\mathcal{T}}^{M'}$.
    Note that by the Helly property of trees this implies that there are vertices $a,b \in N(C) \setminus U_{xy}$
    such that $T^M(a)$ and $T^M(b)$ are disjoint.
    
    Recall that by \zcref{claimcollection1}\ref{claim1-1}, $N(C)$ is contained in $U \cup W_{xy}$.
    Now, combining \zcref{claimcollection1}\ref{claim1-1} and \zcref{claimcollection2}\ref{claim2-7} 
    we obtain that $C$ is complete to $N(C) \setminus U_{xy}$.    
    Note that this in particular implies that that $C = C'$.
    As every $c \in C$ is adjacent to both $a$ and~$b$, we observe that $T^M(c)$ contains the path between $T^M(a)$ and $T^M(b)$ in $T^M$.
    Hence, there is a node $t^M_C$ on that path contained in $T^M(c)$ for every $c \in C'$.
    Furthermore, the inclusion $N(C'') \cap M \subseteq B'_{t^M_C}$ holds vacuously as $C'' = C \setminus C' = \emptyset$.

\medskip
    Now suppose that $N(C) \setminus U_{xy}$ is contained in a bag of a single node of ${\mathcal{T}}^{M'}$ (in particular, this set might be empty).
    Let $t^M_C$ be such a node of $T^M$ that is closest to $t^M_x$. 
   
    Let us first observe that it is enough to show that every vertex in $C$ with a neighbor in $M \setminus B'_{t^M_C}$ belongs to both $C'$ and $B'_{t^M_C}$.
    Indeed, since every vertex in $C'$ has a neighbor in $M \setminus B'_{t^M_C}$, the inclusion $C' \subseteq B'_{t^M_C}$ would follow immediately.
    Furthermore, the inclusion $N(C'') \cap M\subseteq B'_{t^M_C}$ is equivalent to the claim that the neighborhood in $M$ of every vertex in $C''$ is contained in the bag $B'_{t^M_C}$, which is equivalent to the claim that every vertex in $C$ with a neighbor in $M \setminus B'_{t^M_C}$ belongs to $C'$.
    
    So let $c\in C$ be a vertex with a neighbor in $M \setminus B'_{t^M_C}$.
    Such a neighbor must belong to $U_{xy} \setminus B^M_{t_C}$; let us call it $u_{xy}$.
    Recall that due to \zcref{claim:widetildeB_t_x_contains_U0_Uy_NxcapNy} we have $U_{xy} \subseteq N(x) \cap N(y) \subseteq B''_{t^M_x} \subseteq B'_{t^M_x}$.
    Note that this in particular implies that $t^M_C \neq t^M_x$.
    We claim the following holds.

    \begin{description}
        \item[$(\star)$] The vertex $c$ has a neighbor $u_x \in U_x$ that belongs to $B'_{t^M_C}$, but to no other bag on the path from $t^M_C$ to $t^M_x$ in $T^M$.
    \end{description}
       
        Since $t^M_C \neq t^M_x$, there is a vertex $u_x \in N(C) \setminus U_{xy}$ that did not allow for choosing $t^M_C$ to be closer to $t^M_x$, so $u_x \in B'_{t^M_C}$ but~$u_x$ does not belong to the bag of any other node on the path from $t^M_C$ to $t^M_x$. 
        In particular, by \zcref{claimcollection1}\ref{claim1-1} and \zcref{claim:widetildeB_t_x_contains_U0_Uy_NxcapNy}
        the only vertices from $N(C) \setminus U_{xy}$ that might be absent from $B'_{t^M_x}$  are in $U_x$. Thus, $u_x \in U_x$.        
        Since $C$ is complete to $N(C) \cap U_x$ by \zcref{claimcollection1}\ref{claim1-2}, we have that $c$ is adjacent to $u_x$, completing the proof of $(\star)$. 
                
    \medskip
    Since~$u_{xy} \in B'_{t_x^M} \setminus B'_{t_C^M}$ and~$u_x \in B'_{t_C^M} \setminus B'_{t^M}$ for any node~$t^M\neq t^M_C$ on  the path from $t^M_C$ to $t^M_x$ by $(\star)$, the vertices~$u_{xy}$ and~$u_x$ are not contained in the same bag for any node of~$T^M$. 
    Since~$u_{xy}, u_x \in N(c) \cap M$, it follows that~${c \in C'}$. 
    In particular, the subtree $T^M(c)$ contains the node $t^M_C$ as it is the closest node of $T^M(u_x)$ to $T^M(u_{xy})$, so~${c \in B'_{t^M_C}}$, as required.
\end{subproof}

Now we are ready to prove that ${\mathcal{T}}^{M'} = (T^M,\{B'_t\}_{t \in V(T^M)})$ is a tree-decomposition of $G'[M']$.

\begin{claim}
    ${\mathcal{T}}^{M'}$ is a tree-decomposition of $G'[M']$.
\end{claim}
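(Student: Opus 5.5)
We verify the three tree-decomposition axioms for $\mathcal{T}^{M'}$ on $G'[M']$, with $M' = M \cup \bigcup_C C'$, using that $\mathcal{T}^M$ is already a tree-decomposition of $G'[M]$ and that no vertex of $M$ is added to any bag in the second step. Hence $B'_{t^M} \cap M = B''_{t^M}$ for every $t$. In particular, for $v \in M$ the set $T^M(v)$ is the same as in $\mathcal{T}^M$: it is nonempty and connected, and every edge of $G'[M]$ is covered.

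Next consider vertices $c \in C'$ for some component $C$ of $G'-M$. By construction, $c$ was added to a minimal set of nodes making $T^M(c)$ a connected subtree meeting $T^M(a)$ for every $a \in N(c) \cap M$. Concretely, $T^M(c)$ is the union of the paths between the pairwise disjoint subtrees $T^M(a)$, i.e.\ the minimal subtree meeting all $T^M(a)$. This set is nonempty, because $c$ was only added when $N(c)\cap M$ does not fit in one bag, and it is connected. Moreover, every edge $ca$ with $a \in M$ is covered, since $T^M(c)$ meets $T^M(a)$ by construction.

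The remaining, and main, case is an edge $cc'$ with $c, c' \in \bigcup_C C'$. Such vertices lie in a common component $C$, since distinct components are nonadjacent. Here we invoke \zcref{cl:connectedcomponentscommonbag}: there is a node $t^M_C$ with $C' \subseteq B'_{t^M_C}$. Hence $c,c' \in B'_{t^M_C}$ and the edge is covered. No other edges of $G'[M']$ exist, so all three conditions hold. The main obstacle is precisely this edge case inside $C'$: the minimality of $T^M(c)$ alone only guarantees that $T^M(c)$ and $T^M(c')$ each touch the relevant subtrees of $M$-neighbours, not that they intersect each other. The argument therefore relies on the common-bag statement of \zcref{cl:connectedcomponentscommonbag}, which in turn used \zcref{claimcollection1,claimcollection2} and \zcref{claim:widetildeB_t_x_contains_U0_Uy_NxcapNy}.
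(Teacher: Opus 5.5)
Your proof is correct and follows the paper's argument essentially step for step. Vertices of $M$ inherit nonempty connected subtrees from $\mathcal{T}^M$, since no vertex of $M$ is added in the second step. Each vertex $c \in M'\setminus M$ gets, by construction, a nonempty connected subtree meeting $T^M(a)$ for every $a \in N(c)\cap M$. Edges with both endpoints in some $C'$ are covered by the common node $t^M_C$ from the preceding claim.
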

\begin{subproof}
    Notice that for every $v \in M$ and every $t^M \in V(T^M)$ it holds that $v \in B'_{t^M}$ if and only if  $v \in B''_{t^M}$.
    This means that $T^M(v)$ is nonempty and connected as $\mathcal{T}^M$ is a tree-decomposition of $G'[M]$.

    For each $c \in M' \setminus M$, the set $T^M(c)$ is nonempty, as $M' \setminus M$ are precisely the vertices that appear in some bag.
    Furthermore, connectedness of $T^M(c)$ follows directly from the definition of the bags.

    Now consider an edge $e$ of $G'[M']$.
    If both endpoints of $e$ are in $M$, then it is contained in $B''_{t^M}$ for some $t^M$ and thus,
    also in $B'_{t^M}$.
    If $e$ is an edge between $M$ and some component $C$ of $G'-M$, then the endpoint $c$ of $e$ that belongs to $C$ also belongs to $C'$, which implies that both endpoints of $e$ appear in some bag of $T^M(c)$.
    Finally, consider an edge joining two vertices from $M' \setminus M$.
    This means that there is a component $C$ of $G'-M$ such that both endpoints of $e$ belong to $C'$. 
    By \zcref{cl:connectedcomponentscommonbag} there is a node~$t^M_C$ of~$T^M$ such that both endpoints of $e$ are contained in $B'_{t^M_C}$. 
\end{subproof}

\begin{claim}
    \label{claim:vertices_in_proof_of_T^M}
    Suppose some bag~$B'_{t^M}$ of~$\mathcal{T}^{M'}$ contains an independent set~$I$ of size more than~$4\ell$. 
    Then there exists a component $C$ of $G'-M$ with a vertex $c\in C'$ that has a neighbor $u_x \in U_x \setminus U_{\mathrm m}$ such that some vertex $u_{\mathrm m} \in U_{\mathrm m}$ is adjacent to neither $c$ nor $u_x$.
\end{claim}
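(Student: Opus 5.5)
We argue by contraposition, starting from an independent set $I\subseteq B'_{t^M}$ with $\abs{I}>4\ell$. Since $B'_{t^M}\cap M = B''_{t^M}$ and $\alpha(\mathcal{T}^M)\le 4\ell$ by \zcref{lem:widetildeT^M}, the set $I$ must contain some vertex $c\in C'$ for some component $C$ of $G'-M$. Moreover $B'_{t^M}\ne B''_{t^M}$, so by \zcref{obs:extended_bags_in_T^M_left_of_x} the node $t$ is $t_x$ or lies in a component of $T-t_x$ not containing $t_y$. In particular $t$ is not strictly inside the path from $t_x$ to $t_y$. Hence the only vertices added to $B''_{t^M}$ beyond $B_t\cap M$ are movable vertices, together with $x$ and $U_y\setminus U_{\mathrm m}$ when $t=t_x$.

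\textbf{Step 1: movable vertices in $I$.} Suppose first that $I\cap U_{\mathrm m}=\emptyset$. Outside $U_{\mathrm m}$, the bag $B''_{t^M}$ agrees with $B_t\cap M$, apart from $x$ and $U_y\setminus U_{\mathrm m}$ at $t_x$; these are already handled by Cases (3) and (4) of \zcref{lem:widetildeT^M}, and in fact $U_y\setminus U_{\mathrm m}\subseteq B_{t_y}$ only, so we need care there. By \zcref{obs:trees_in_T^M_vs_T}, every $c\in I\setminus M$ lies in $B_t$. We would then show that $I\subseteq B_t$, possibly after replacing $x$ or $U_y$-vertices, which contradicts $\alpha(\mathcal{T})\le 4\ell$. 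Hence we may fix some $u_{\mathrm m}\in I\cap U_{\mathrm m}$.

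\textbf{Step 2: find $c$ and $u_x$.} Because $I\setminus M\neq\emptyset$, pick $c\in I\cap C'$. By construction $c$ was added because $N(c)\cap M$ is not contained in one bag of $\mathcal{T}^M$. As in the proof of \zcref{obs:extended_bags_in_T^M_left_of_x}, all neighbours of $c$ outside $B''_{t^M_x}$ lie in $U_x\setminus U_{\mathrm m}$. So, using \zcref{claim:widetildeB_t_x_contains_U0_Uy_NxcapNy} and Helly, $c$ has a neighbour $u_x\in U_x\setminus U_{\mathrm m}$. The vertex $c$ is non-adjacent to $u_{\mathrm m}$ because both lie in $I$. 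It remains to choose $u_x$ non-adjacent to $u_{\mathrm m}$.

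\textbf{Step 3, the main obstacle.} Suppose every such $u_x\in N(c)\cap(U_x\setminus U_{\mathrm m})$ is adjacent to $u_{\mathrm m}$. Then we bound $\abs{I}$ directly. Split $I$ into $I\cap N(r)$, which has size less than $2\ell$; $I\cap \NN(x)\subseteq \NN(x)\setminus\NN(u_{\mathrm m})$, which has size at most $\ell-1$; $I\cap W_y$, which has size at most $\ell-1$ by \zcref{claimcollection2}\ref{claim2-2}; and $I\setminus M$. The difficulty is bounding $I\setminus M$. All of it lies in $B_t$, and we would try to show that the independent vertices of $I\setminus M$ can be traded for $u_x$-type vertices of $B_t$. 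Alternatively, using \zcref{claimcollection1}\ref{claim1-2}, each such $c'$ is complete to $N(C)\cap U_x$, so the structure forces the set $I'$ obtained from $I$ by swapping to lie in $B_t$. This would give $\abs{I}\le 4\ell$, a contradiction. We expect this exchange, showing that $I$ (or a same-size modification) is independent inside $B_t$, to be the delicate part, since it needs the private-path structure from $(\star)$ of \zcref{cl:connectedcomponentscommonbag}.
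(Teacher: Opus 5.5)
Your setup matches the paper's: some $c\in I\cap C'$ exists, the node $t$ is $t_x$ or lies away from $t_y$, and $c\in B_t$ by \zcref{obs:trees_in_T^M_vs_T}. However, both substantive steps are missing. Step~1 needs more than an arbitrary vertex of $I\cap U_{\mathrm m}$. You need a vertex $u\in I\setminus B_t$ (which exists as $\alpha(\mathcal{T})\le 4\ell$) to be movable, because the later argument uses $u_{\mathrm m}\notin B_t$. The only obstruction is $t=t_x$ with $u\in U_y\setminus U_{\mathrm m}$ and $u\notin B_{t_x}$. ``Replacing'' $u$ is not an argument. Cases~(3) and~(4) of \zcref{lem:widetildeT^M} do not apply either, since they concern independent sets inside $M$, whereas $c\notin M$. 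The paper excludes this case by an induced $P_5$:
\begin{itemize}
\item By \zcref{claim:role_of_Uy}, $T(u)$ lies in the component of $T-t_x$ containing $t_y$.
\item Since $c$ was added to $B'_{t_x^M}$, it has a neighbour $u_x\in U_x$ outside $B''_{t_x^M}\supseteq B_{t_x}$. As $u_x$ is adjacent to $x$, $T(u_x)$ lies in a component of $T-t_x$ avoiding $t_y$, so $u_xu\notin E(G)$.
\item A vertex $z\in W_x$ exists since $(x,y)$ is bad. It is adjacent to $u$ by \zcref{claimcollection2}\ref{claim2-6} and not to $c$. Hence $c,u_x,z,u,y$ or $c,u_x,x,z,u$ is an induced $P_5$.
\end{itemize}

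Step~3 is left open, and the counting route cannot close it. The three pieces you bound already allow up to $4\ell-3$ vertices, so you would need $\abs{I\setminus M}\le 3$. None of the available facts bounds how many vertices $I$ takes from the pairwise anticomplete components of $G'-M$. The missing idea is to use why $c$ entered $B'_{t^M}$: there are $a,b\in N(c)\cap M$ with no common bag of $\mathcal{T}^M$, and $t$ lies on the path between their subtrees. The paper then argues as follows.
\begin{itemize}
\item Since $U_{\mathrm m}$ lies in every bag of $\mathcal{T}^M$, we have $a,b\notin U_{\mathrm m}\supseteq U_0$.
\item One of them, say $a$, lies in no bag on the $t_y$-side of $t$. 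So $ay\notin E(G)$, and hence $a\in U_x\setminus U_{\mathrm m}$.
\item If $au_{\mathrm m}\notin E(G)$, take $u_x=a$.
\item Otherwise, since $u_{\mathrm m}\notin B_t$, $T(u_{\mathrm m})$ lies in one component of $T-t$, and that component must meet $T(a)$. It is therefore neither the component containing $t_y$ nor one meeting $T(b)$, giving $u_{\mathrm m}y,u_{\mathrm m}b\notin E(G)$.
\item Finally, $b\in U_x\setminus U_{\mathrm m}$: otherwise $by\in E(G)$ and $u_{\mathrm m},a,c,b,y$ is an induced $P_5$. So $u_x=b$ works.
\end{itemize}
No exchange argument and no use of $(\star)$ is required.
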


\begin{subproof}
	Observe that $I$ needs to contain a vertex $c \in \bigcup_C C'$ that was added to~$B'_{t^M}$ when constructing the decomposition $\mathcal{T}^{M'}$ from $\mathcal{T}^M$ because $\alpha(\mathcal{T}^M) \leq 4\ell$ by \zcref{lem:widetildeT^M}. 
    Since, in particular, $B'_{t^M} \neq B''_{t^M}$, by \zcref{obs:extended_bags_in_T^M_left_of_x} we observe that either~${t = t_x}$ or~${t}$ belongs to a component of~$T - t_x$ that does not contain~$t_y$. 
    Note that, since $N(x)\cup N(y)\subseteq M$ and $c\not\in M$, vertex $c$ is adjacent to neither $x$ nor $y$.
   
    By \zcref{obs:trees_in_T^M_vs_T}, each vertex in~${B'_{t^M} \setminus B''_{t^M}}$ is contained in~$B_t$.
    This implies that $c\in B_t$ and, since~$\alpha(\mathcal{T}) \leq 4\ell$ by our original assumption, it also implies that $I$ contains a vertex $u \in U_{\mathrm m} \cup U_y \cup \{x\}$ that was added to~$B''_{t^M}$ when constructing the decomposition $\mathcal{T}^M$ from $\mathcal{T}$. 
    In particular, $u\notin B_t$.
    Note that vertices $c$ and $u$ are nonadjacent, since they both belong to~$I$. 
    
    Next, we show that ${u \in U_{\mathrm{m}}}$. 
    Suppose for a contradiction that~${u \notin U_{\mathrm{m}}}$.
    Then,~$u$ belongs to the set ${(U_y \setminus U_{\mathrm{m}}) \cup \{x\}}$. 
    Hence, by construction of $\mathcal{T}^M$, vertex $u$ was added to $B''_{t^M}$ for all the nodes on the path between $t_x$ and~$t_y$ in~$T$, so we conclude from \zcref{obs:extended_bags_in_T^M_left_of_x} that~$t = t_x$. 
    This implies that $u\neq x$, since~${u \notin B_{t_x}}$ and~${x \in B_{t_x}}$.
    Hence,~${u \in U_y \setminus U_{\mathrm{m}}}$.
    By \zcref{claim:role_of_Uy}, $u \in B_{t_y}$. 
    Since~$u \notin B_{t_x}$, we have that~$T(u)$ is a subtree of the component of~$T - t_x$ that contains $t_y$. 
    Moreover, using the fact $c\in B_{t_x}$ and applying   \zcref{obs:extended_bags_in_T^M_left_of_x} to the nodes of $T(c)$, we obtain that the subtree~$T(c)$ contains~$t_x$ but no vertex from the component of~$T - t_x$ containing~$t_y$. 
    Hence, $T(u)$ and~$T(c)$ are disjoint. 
   Since~${c \in B'_{t^M_x} \setminus B''_{t^M_x}}$, there exist a vertex~${u_x \in (N(c) \cap M) \setminus B''_{t^M_x}}$. 
    Since $B_{t_x}\subseteq B''_{t^M_x}$, we have that~${u_x \notin B_{t_x}}$.
    Furthermore, by 
    \zcref{claimcollection1}\ref{claim1-1} and \zcref{claim:widetildeB_t_x_contains_U0_Uy_NxcapNy}, ${u_x \in U_x}$; hence, $u_x$ is adjacent to $x$ bit not to $y$. 
    Since~$T(u_x)$ intersects~$T(c)$ and does not contain $t_x$, we infer that $T(u_x)$ is a subtree of a component of~$T-t_x$ that does not contain~$t_y$.
    This implies that~$T(u_x)$ and~$T(u)$ are disjoint; in particular,~$u_x$ is not adjacent to $u$. 
    Let~${z \in W_x}$. 
    Note that~$z$ exists since~$(x,y)$ is a bad pair.
    Furthermore, $z$ is adjacent to $x$ but not to $y$, and $z$ is adjacent to $u$ (by \zcref{claimcollection2}\ref{claim2-6}) but not to $c$ (by \zcref{claimcollection1}\ref{claim1-1} again).
    Hence, we have a $P_5$ either of the form $c, u_x, z, u, y$ (if $z$ is adjacent to $u_x$) or of the form $c, u_x, x, z, u$ (otherwise).
    Both cases yield a contradiction, so~${u \in U_m}$. 
    From now on, we denote~$u$ by~$u_{\mathrm{m}}$. 
    
    Since we added~$c$ to $B'_{t^M}$, by construction and \zcref{claimcollection1}\ref{claim1-1}, there are neighbors $a, b \in N(c) \cap M \subseteq U \cup W_{xy}$ that are not contained in a common bag of $\mathcal{T}^M$ and, moreover,~$t$ lies on the path in~$T$ between nodes~$t_a$ and~$t_b$  with~${a \in B''_{t^M_a}}$ and~${b \in B''_{t^M_b}}$.
    Since vertices of $U_{\mathrm m}$ belong to all bags of $\mathcal{T}^M$, it follows that $a, b \notin U_{\mathrm m}$ and hence, by \zcref{cl:U0new}, $a, b \notin U_0$.
    % $a,b \notin B''_{t^M}$ 
    Therefore, $a, b \in U_x \cup U_y \cup U_{xy} \cup W_{xy}$. 
    Note that at least one of $a$ and~$b$ does not belong to any bag of a node from the component of $T - t$ containing $t_y$, as otherwise the path in $T$ between $t_a$ and $t_b$ would not go through $t$.
    Without loss of generality, say this is the case for~$a$. 
    Now, since either~${t = t_x}$ or~${t}$ belongs to a component of~$T - t_x$ that does not contain~$t_y$, we observe that~$y \notin B''_{t^M}$ and no bag of $\mathcal{T}^M$ contains both $a$ and $y$, implying that $a$ and $y$ are nonadjacent in $G'$. 
    Consequently, the only remaining option is that $a \in U_x \setminus U_{\mathrm m}$. 
    If $a$ is not adjacent to~$u_{\mathrm m}$, we take $u_x \coloneqq a$; this way, $c$ is adjacent to~$u_x$ by the definition of $c$, and $c$ is nonadjacent to $u_{\mathrm m}$ as $c,u_{\mathrm m} \in I$, and we are done. 
    So we may assume that $a$ is adjacent to~$u_{\mathrm m}$.

	We next show that~$u_{\mathrm m}$ is not adjacent to~$y$. 
    Let us assume for a contradiction that $u_{\mathrm m} y \in E(G')$. 
    Since $u_{\mathrm m} \notin B_t$, the tree $T(u_{\mathrm m})$ is a subtree of the component of $T - t$ that contains $t_y$ since $u_{\mathrm m} y \in E(G')$. 
    Therefore, as before, we have that the tree $T(u_{\mathrm m})$ is disjoint from $T(a)$, so $u_{\mathrm m} a \notin E(G')$, a contradiction.

    We thus have $u_{\mathrm m} \in U_{\mathrm m} \setminus U_y$.
	As $u_{\mathrm m} \notin B_t$, we know that $T(u_{\mathrm m})$ is contained in one component of $T - t$.
    Recall also that $T(a)$ does not intersect the component of $T - t$ containing~$t_y$; in particular, $a$ is not adjacent to $y$.
    Since $a$ is adjacent to~$u_{\mathrm m}$, the component of $T - t$ containing $T(u_{\mathrm m})$ intersects $T(a)$.
    Hence, $T(b)$ and $T(u_{\mathrm m})$ are disjoint, since otherwise the path in $T$ between $t_a$ and $t_b$ would not go through $t$. 
    Consequently,~$u_{\mathrm m}$ and~$b$ are nonadjacent.
    
    % \begin{figure}[h]
    %     \centering
    %     \begin{minipage}{.48\textwidth}
    %         \centering
    %         \includegraphics[page=17,scale=1.2]{figures/find_p5.pdf}
    %         \caption{The $P_5$ when $b \in U_y \cup U_{xy}$.}%
    %         \label{fig:p5_for_b_in_uy_uxy}
    %     \end{minipage}
    %     \begin{minipage}{.48\textwidth}
    %         \centering
    %         \includegraphics[page=18,scale=1.2]{figures/find_p5.pdf}
    %         \caption{The $P_5$ when $b \in \NN(x) \cap \NN(y)$.}%
    %         \label{fig:p5_for_b_in_n2xy}
    %     \end{minipage}
    % \end{figure}
    
    To be able to take~$u_x$ to be~$b$, all that is left to argue is that $b \in U_x \setminus U_{\mathrm m}$.
    Suppose for a contradiction that~$b \notin U_x \setminus U_{\mathrm{m}}$.
    Since $b \notin U_{\mathrm m}$, it follows that $b \notin U_x$.
    Then either $b \in U_y \cup U_{xy}$, or $b \in W_{xy}$.
    In both cases, $b$ is adjacent to $y$, and hence, we have a $P_5$ of the form $u_{\mathrm m},a,c,b,y$, a contradiction.
    % ; see \zcref{fig:p5_for_b_in_uy_uxy}.
    % In the latter case, we have a $P_5$ of the form $u_{\mathrm m},r,y,b,c$; see \zcref{fig:p5_for_b_in_n2xy}.
    % As both possibilities lead to a contradiction, 
    % 
    We conclude that $b \in U_x \setminus U_{\mathrm m}$ and we may take $u_x \coloneqq b$. 
\end{subproof}

%\MM[inline]{The above proof was simplified and consequently we have five figures less. The first three figures are not needed due to a simplification. The last two figures cover two cases which share a common feature that suffices to find a $P_5$.}

We now have everything ready to prove that the independence number of ${\mathcal{T}}^{M'}$ is at most~$4\ell$.

\begin{claim}\label{lem:T^M}
   $\alpha(\mathcal{T}^M) \leq 4\ell$.
\end{claim}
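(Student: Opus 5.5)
The plan is a proof by contradiction. Suppose some bag of the decomposition on $T^M$ contains an independent set $I$ with $\abs{I} > 4\ell$. If $I$ lies inside the intermediate bag $B''_{t^M}$, then \zcref{lem:widetildeT^M} already gives $\abs{I}\le 4\ell$, a contradiction. So we may assume $I$ uses a vertex of some $C'$ added in the second step; in the notation fixed just before \zcref{obs:trees_in_T^M_vs_T}, the bags on $T^M$ are the $B'_{t^M}$. Now \zcref{claim:vertices_in_proof_of_T^M} applies. It gives a component $C$ of $G'-M$, a vertex $c\in C'$, a neighbor $u_x\in U_x\setminus U_{\mathrm m}$ of $c$, and a vertex $u_{\mathrm m}\in U_{\mathrm m}$ adjacent to neither $c$ nor $u_x$. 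The remaining work is to show that this configuration contains an induced $P_5$.

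First I would collect the adjacencies that are forced.
\begin{itemize}
\item $c\notin M$, so $c$ is adjacent to none of $r,x,y$.
\item By \zcref{claimcollection1}\ref{claim1-1}, $c$ has no neighbor in $W_x\cup W_y$.
\item $u_x$ and $u_{\mathrm m}$ are both neighbors of $r$; $u_x$ is adjacent to $x$ but not to $y$.
\item Since $(x,y)$ is bad, $\alpha(W_x)\ge\ell$. Movability of $u_{\mathrm m}$ means $\NN(x)\setminus\NN(u_{\mathrm m})\subseteq\NN(y)$, so $u_{\mathrm m}$ is complete to $W_x$.
\item Fix $z\in W_x$. Then $z$ is adjacent to $x$ and to $u_{\mathrm m}$, but to none of $c$, $r$, $y$.
\end{itemize}

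The case analysis then depends on whether $u_x$ is adjacent to $z$.

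\emph{Case $u_xz\notin E(G)$.} Then $c,u_x,r,u_{\mathrm m},z$ is an induced $P_5$. Each consecutive pair is an edge. The non-edges $cr$, $cu_{\mathrm m}$, $cz$, $u_xu_{\mathrm m}$, $u_xz$ and $rz$ all hold by the facts above.

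\emph{Case $u_x$ is complete to $W_x$.} This is the hardest case. My first attempt is to use $u_x\notin U_{\mathrm m}$. Since $u_x$ is complete to $W_x$, non-movability must come from $\NN(x)\cap\NN(y)$. That is, the set $S$ of non-neighbors of $u_x$ in $W_{xy}$ satisfies $\alpha(S)\ge\ell$. As in \zcref{cl:U0new}, any $s\in S$ nonadjacent to some $w\in W_x$ would give the induced $P_5$ $u_x,w,x,s,y$, so $S$ is complete to $W_x$. Together with $\alpha(W_x)\ge\ell$ and $\alpha(S)\ge\ell$, this yields an induced $K_{\ell,\ell}$, a contradiction.

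If that route leaves a gap, my fallback is to take $s\in S$ and look for a path such as $c,u_x,r,y,s$ or $c,u_x,x,s,\ldots$, using that $c$ is not adjacent to $s$ by \zcref{claimcollection1}\ref{claim1-1}.

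Either way, both cases yield an induced $P_5$ or an induced $K_{\ell,\ell}$. This contradicts \zcref{claim:vertices_in_proof_of_T^M}, so every bag on $T^M$ has independence number at most $4\ell$.
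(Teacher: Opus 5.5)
Your first case is correct and is exactly the paper's first step: if $u_x$ has a non-neighbor $z\in W_x$, then $c,u_x,r,u_{\mathrm m},z$ is an induced $P_5$. You should phrase the split as ``$u_x$ has a non-neighbor in $W_x$'' versus ``$u_x$ is complete to $W_x$'', rather than fixing $z$ first; as written it is not exhaustive, but that is cosmetic.

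The genuine gap is in the second case. You correctly derive $\alpha(S)\ge\ell$ for $S=W_{xy}\setminus\NN(u_x)$. However, the path $u_x,w,x,s,y$ is not induced, because $u_x\in U_x$ is adjacent to $x$, so $u_xx$ is a chord. The analogy with \zcref{cl:U0new} breaks exactly here: there the vertex $u_0\in U_0$ is nonadjacent to both $x$ and $y$. So you have no justification for ``$S$ is complete to $W_x$'', and the claimed $K_{\ell,\ell}$ does not follow.

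Your fallback $c,u_x,r,y,s$ has a similar flaw. \zcref{claimcollection1}\ref{claim1-1} only gives $N(C)\subseteq U\cup W_{xy}$, which explicitly allows $c$ to be adjacent to $s\in S\subseteq W_{xy}$. Moreover, by \zcref{claimcollection2}\ref{claim2-7}, if $s$ has any neighbor in $C$ then it is complete to $C$. So $cs$ may well be an edge, and then that path is not induced.

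The fix is to run the $K_{\ell,\ell}$ argument in the opposite direction and to end the path at a vertex of $W_x$ rather than at $c$. Take independent sets $A\subseteq W_x$ (using that $(x,y)$ is bad) and $B\subseteq S$, each of size $\ell$. They are disjoint, since $W_x\cap W_{xy}=\emptyset$. Because $G$ is $K_{\ell,\ell}$-free, some $w\in A$ and $s\in B$ are nonadjacent.

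Then $w,u_x,r,y,s$ is an induced $P_5$. The edges hold as follows: $wu_x\in E(G)$ by the case assumption, $u_xr$ and $ry$ are edges, and $ys\in E(G)$ since $s\in\NN(y)$. The non-edges hold as follows: $u_xy\notin E(G)$ since $u_x\in U_x$, and $u_xs\notin E(G)$ by the definition of $S$. Also $w,s\notin N[r]$, $wy\notin E(G)$ since $w\in W_x$, and $ws\notin E(G)$ by choice. This is precisely how the paper concludes.
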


\begin{subproof}    
    Consider an arbitrary $t^M \in V(T^M)$.
    Suppose for a contradiction that there exists an independent set $I \subseteq B'_{t^M}$ of size more than $4\ell$. 
    Let~$C$, $c$, $u_x$, and~$u_{\mathrm{m}}$ be as in \zcref{claim:vertices_in_proof_of_T^M}.

    % We postpone the proof of this key claim to the end of this section.
    % Consider the vertices $c, u_{\mathrm m}$, and $u_x$ from \zcref{claim:vertices_in_proof_of_T^M}.
    We first show that $\NN(x)\setminus \NN(u_x)\subseteq \NN(y)$.
    Indeed, suppose that this is not the case. 
    Then, there exists a vertex $w \in \NN(x) \setminus \NN(u_x)$ that is not adjacent to~$y$.   
    As $u_{\mathrm m} \in U_{\mathrm m}$, we have that ${\NN(x)\setminus \NN(u_{\mathrm m})\subseteq \NN(y)}$.
        Therefore, $w$ is adjacent to $u_{\mathrm m}$.
        Since $c$ is not adjacent to $w$ by \zcref{claimcollection1}\ref{claim1-1}, there is a copy of $P_5$ on vertices $c,u_x,r,u_{\mathrm m},w$ (see \zcref{fig:u_x_complete_to_N_2(x)setminusN_2(y)}), a contradiction.
        Hence, $\NN(x)\setminus \NN(u_x)\subseteq \NN(y)$, as claimed.
    
    \begin{figure}[h]
        \centering
        \begin{minipage}{.48\textwidth}
            \centering
            \includegraphics[page=10,scale=1.2]{figures/find_p5.pdf}
            \captionof{figure}{The $P_5$ when \newline $\NN(x)\setminus \NN(u_x)\nsubseteq \NN(y)$.}%
            \label{fig:u_x_complete_to_N_2(x)setminusN_2(y)}
        \end{minipage}
        \begin{minipage}{.48\textwidth}
            \centering
            \includegraphics[page=11,scale=1.2]{figures/find_p5.pdf}
            \captionof{figure}{The $P_5$ when $w$ is nonadjacent to $w'$.}%
            \label{fig:wuxrywp_path_on_nonincident_wwp}
        \end{minipage}
    \end{figure}
    
%\MM[inline]{A note for the journal version: the figures are not unified with respect to whether all edges that we know to be edges are explicitly depicted. For example, the edge $xw$ in the above two figures.}
    
    Since $u_x \notin U_{\mathrm m}$ and $\NN(x)\setminus \NN(u_x)\subseteq \NN(y)$, this implies that $\alpha(\NN(x) \setminus \NN(u_x)) \geq \ell$.
    Moreover, since $\NN(x)\setminus \NN(u_x)\subseteq \NN(y)$, we obtain that $\NN(x) \setminus \NN(u_x) = W_{xy} \setminus \NN(u_x)$ and hence, $\alpha(W_{xy}) \setminus \NN(u_x)) \geq \ell$.
    The sets $W_x$ and $(W_{xy} \setminus \NN(u_x)$ are disjoint and each contains an independent set of size $\ell$.
    Hence, they cannot be complete to each other, as that would imply the presence of a copy of~$K_{\ell, \ell}$ in $G$.
    Thus, there exists a pair of nonadjacent vertices ${w \in W_y}$ and ${w' \in (W_{xy}) \setminus \NN(u_x)}$.
    But now, $G$ contains copy of $P_5$ on vertices $w,u_x,r,y,w'$ (see \zcref{fig:wuxrywp_path_on_nonincident_wwp}), a contradiction.
\end{subproof}

\paragraph{The tree-decompositions ${\mathcal{T}^D}$}
Let $D$ be a component of $G' - M'$.
In this part, we define the tree-decompositions $\mathcal{T}^D$ and prove the necessary properties for a smooth construction of the final decomposition for Case~2.

We define the auxiliary graph~$H_D$ with vertex set~$N[D]$ whose edge set consists of all edges that have at least one endpoint in~$D$. 
Note that $H_D$ is not necessarily an induced subgraph of $G'$.
We construct a tree-decomposition~${\mathcal{T}^D = (T^D, 
\{B'_{t}\}_{t \in V(T^D)})}$ of~$H_D$ as follows. 
We let $T^D$ be a copy of $T$, with a node $t^D$ corresponding to each node $t$ of $T$, and, for all $t\in V(T)$, we define  
\[
    B'_{t^D} \coloneqq (B_t \cap N[D]) \cup (N(D) \setminus U_{xy}) \,.
\]

Clearly, each edge of~$H_D$ is contained in some bag~$B_t$ of~$\mathcal{T}$, and hence, in~${B_t \cap N[D]}$. 
So~$\mathcal{T}^D$ is indeed a tree-decomposition of~$H_D$, since~$\mathcal{T}$ is a tree-decomposition of~$G'$ and any vertex added to some bag was added to all bags. 
Moreover, since~$N(x) \cap N(y) \subseteq B_{t_x}$, as observed in the proof of \zcref{claim:widetildeB_t_x_contains_U0_Uy_NxcapNy}, it follows that  $N(D) \cap U_{xy}\subseteq B_{t_x}$ and, hence, 
\begin{equation}\label{eq:T^D}
    N(D) \subseteq B'_{t^D_x} \,.
\end{equation}

    Let $C$ be the component of $G' - M$ that contains $D$. 
    As before, let~$C'$ be the set of vertices of~$C$ that appear in $\bigcup_{t^M \in V(T^M)} B'_{t^M}$ and let~$C'' = C \setminus C'$. 
    By \zcref{cl:connectedcomponentscommonbag}, there is a node $t^M_C \in V(T^M)$ such that $C' \subseteq B'_{t^M_C}$ and $N(C'') \cap M \subseteq B'_{t^M_C}$.
    Note that~$C' = C\cap M'$ and, consequently,~$C'' = C\setminus M'$. 
    Furthermore, since $D$ is disjoint from $M'$, we have $D\subseteq C''$.
    
\begin{claim}\label{claim:D_complete_to_neighborhood_in_C'}
$D$ is complete to $N(D) \cap C$. 
\end{claim}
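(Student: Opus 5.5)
The plan is to argue by contradiction and produce an induced $P_5$ of the form $d, d', c, a, r$.

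\textbf{Setting up the contradiction.} Suppose $D$ is not complete to $N(D)\cap C$. Then there are $c\in N(D)\cap C$ and $d\in D$ with $cd\notin E(G)$. Since $c$ has some neighbor in the connected set $D$, walking along a path in $D$ gives two adjacent vertices $d,d'\in D$ with $cd'\in E(G)$ and $cd\notin E(G)$. This is the same trick as in \zcref{claimcollection1}\ref{claim1-2}.

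Next we place $c$. Since $D$ is a component of $G'-M'$ and $c\in N(D)$, we have $c\in M'$, so $c\in C'$. Meanwhile $d,d'\in D\subseteq C''$.

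\textbf{Finding the vertex $a$.} We need $a\in N(c)\cap M$ with $a\notin B'_{t^M_C}$, where $t^M_C$ is the node from \zcref{cl:connectedcomponentscommonbag}.

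Suppose instead that $N(c)\cap M\subseteq B'_{t^M_C}$. For vertices of $M$ the bags $B'$ and $B''$ coincide, so $N(c)\cap M\subseteq B''_{t^M_C}$, a single bag of $\mathcal{T}^M$. Then $c$ would never have been added to any bag of $T^M$, contradicting $c\in C'$. This is the step I expect to need the most care: one has to check that membership in $C'$ is exactly the condition that $N(c)\cap M$ is not contained in one bag of $\mathcal{T}^M$.

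So such an $a$ exists. By \zcref{cl:connectedcomponentscommonbag}, $N(C'')\cap M\subseteq B'_{t^M_C}$, and hence $a$ is adjacent to neither $d$ nor $d'$.

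\textbf{Locating $a$.} By \zcref{claimcollection1}\ref{claim1-1}, $a\in U\cup W_{xy}$. The options other than $U_{xy}$ are excluded as follows.
\begin{itemize}
\item If $a\in W_{xy}$, then $C$ is complete to $a$ by \zcref{claimcollection2}\ref{claim2-7}, so $d$ would be adjacent to $a$. Contradiction.
\item If $a\in U_0\cup U_x\cup U_y$, then $C$ is complete to $a$ by \zcref{claimcollection1}\ref{claim1-2}, giving the same contradiction.
\end{itemize}
Hence $a\in U_{xy}\subseteq N(r)$.

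\textbf{The induced path.} Consider $d,d',c,a,r$.
\begin{itemize}
\item The consecutive pairs $dd'$, $d'c$, $ca$, $ar$ are edges.
\item The vertex $r$ has no neighbors in $C$, since $N(r)\subseteq M$.
\item We have $cd\notin E(G)$ by choice, and $ad,ad'\notin E(G)$ by the previous step.
\end{itemize}
Thus $d,d',c,a,r$ is an induced $P_5$ in $G$, which is a contradiction. Therefore $D$ is complete to $N(D)\cap C$.
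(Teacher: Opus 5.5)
Your proof is correct and follows essentially the same route as the paper. You take adjacent $d,d'\in D$ distinguished by $c\in C'$, and use $c\in C'$ to obtain a neighbor $a\in M\setminus B'_{t^M_C}$, which by the choice of $t^M_C$ misses both $d$ and $d'$. You then place $a$ in $U_{xy}$ via the completeness observations and conclude with the induced $P_5$ on $d,d',c,a,r$.
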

\begin{subproof}
    \begin{figure}[h]
        \centering
        \includegraphics[page=15,scale=1.2]{figures/find_p5.pdf}
        \caption{The $P_5$ if $D$ is not complete to $N(D) \cap C'$.}%
        \label{fig:p5_dcomponent_incomplete}
    \end{figure}

    Suppose for a contradiction that~$D$ is not complete to~$N(D) \cap C$.
    Then there are vertices $c' \in C'$ and $d_1, d_2 \in D$ such that $c$ is adjacent to~$d_1$ but not to~$d_2$, and $d_1$ and~$d_2$ are adjacent (similarly as in \zcref{claimcollection1}\ref{claim1-2}). 
    Since $N(C'') \cap M \subseteq B'_{t^M_C}$ and $d_1,d_2\in C''$, all the neighbors of  $d_1$ and $d_2$ in~$M$ belong to $B'_{t^M_C}$.
    However, as $c'$ belongs to $C'$, it has a neighbor~${a \in M}$ that does not belong to $B'_{t^M_C}$. 
    Hence, $a$ is adjacent to neither~$d_1$ nor~$d_2$. 
    Since~$a$ is not complete to~$C$, by 
    \zcref{claimcollection1}\ref{claim1-1} and~\ref{claim1-2}, and \zcref{claimcollection2}\ref{claim2-7}, we infer that $a \in U_{xy}$.
    But then we have a $P_5$ of the form $d_2,d_1,c',a,r$ (see \zcref{fig:p5_dcomponent_incomplete}).
\end{subproof}

%\MM[inline]{Comment for the journal version: To obtain an induced $P_5$ in the above proof, it suffices to apply \zcref{claim:C_adjacent_to_U_and_N2(x)capN2(y),claim:C_complete_to_neighborhood_in_N2(x)capN2(y)} (that is, we don't need \zcref{claim:C_complete_to_neighborhood_in_Uxy0}) to infer that $a\in U$, which suffices to imply that $a$ is adjacent to $r$ (note that neither $x$ nor $y$ are used in the $P_5$). I did not simplify the proof this way now, since that would invalidate the figure, which, however, I believe may be useful for the reader (because of $d_1, d_2, D$, and $C$).} 

\begin{claim}
    \label{claim:alpha-T^D}
    $\alpha(G'[B'_{t^D}]) \leq 4\ell$ for every~$t^D \in V(T^D)$. 
\end{claim}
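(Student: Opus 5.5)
We mirror the argument for the bags $B'_{t^C}$ in Case~1. Fix $t^D$ and an independent set $I\subseteq B'_{t^D} = (B_t\cap N[D])\cup(N(D)\setminus U_{xy})$. If $I\subseteq B_t$, then $\abs{I}\le 4\ell$ by \ref{it:induction_independence}. So assume $I$ contains a vertex $v\in N(D)\setminus U_{xy}$ that was added to the bag. We will show that such a vertex is complete to $D$, which forces $I\cap D=\emptyset$. We will then show that $I$ lies inside a set of bounded independence number.

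The first step is to locate $N(D)$. Since $D\subseteq C''$ for the component $C$ of $G'-M$ containing $D$, we have $N(D)\subseteq (N(C)\cap M)\cup (N(D)\cap C)$. Here $N(C)\cap M\subseteq U\cup W_{xy}$ by \zcref{claimcollection1}\ref{claim1-1}, and $N(D)\cap C\subseteq C'$. Next we establish completeness to $D$ for each type of added vertex:
\begin{itemize}
\item vertices of $N(D)\cap (U_0\cup U_x\cup U_y)$ are complete to $C\supseteq D$ by \zcref{claimcollection1}\ref{claim1-2};
\item vertices of $N(D)\cap W_{xy}$ are complete to $C$ by \zcref{claimcollection2}\ref{claim2-7};
\item vertices of $N(D)\cap C$ are complete to $D$ by \zcref{claim:D_complete_to_neighborhood_in_C'}.
\end{itemize}
Hence every vertex of $N(D)\setminus U_{xy}$ is complete to $D$, and therefore $I\subseteq N(D)$.

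It remains to bound $\alpha(G'[N(D)])$, or rather the independence number of $I\subseteq N(D)$. Our plan is to use \zcref{cl:connectedcomponentscommonbag}: since $D\subseteq C''$, we have $N(D)\cap M\subseteq N(C'')\cap M\subseteq B'_{t^M_C}$, and $N(D)\cap C\subseteq C'\subseteq B'_{t^M_C}$. Thus $N(D)\subseteq B'_{t^M_C}$, and consequently $\abs{I}\le \alpha(\mathcal{T}^{M'})\le 4\ell$ by \zcref{lem:T^M}. This is the bound for the bag $B'_{t^M_C}$ of $\mathcal{T}^{M'}$, which the claim there really establishes, even though it is stated for $\mathcal{T}^M$.

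We expect the main obstacle to be the careful bookkeeping that every vertex of $N(D)$ really lies in $B'_{t^M_C}$. Vertices of $U_{xy}$ cause no trouble: they are not added to $B'_{t^D}$ at all, and any of them lying in $I$ already lies in $B_t$. The potential subtlety is the case where $I$ mixes original vertices of $B_t\cap N(D)$ with added vertices; this is handled because all of $I$ ends up inside $N(D)\subseteq B'_{t^M_C}$ regardless of origin. An alternative fallback is to argue directly: $I\cap (U\cup W_{xy})$ splits into at most $N(r)$ plus $W_{xy}$-parts, which can be bounded as in Case~(2) of \zcref{lem:widetildeT^M}. The single-bag containment gives the cleanest route, though.
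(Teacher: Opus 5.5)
Your argument is correct and is essentially the paper's proof: you reduce to the case where $I$ contains an added vertex of $N(D)\setminus U_{xy}$, use that these vertices are complete to $D$ to get $I\subseteq N(D)$, and then use $N(D)\subseteq (N(C'')\cap M)\cup C'\subseteq B'_{t^M_C}$ together with the bound on the bags of $\mathcal{T}^{M'}$ (you are right that the claim stated as $\alpha(\mathcal{T}^M)\le 4\ell$ is really proved for $\mathcal{T}^{M'}$). Drop the ``fallback'' at the end: it is unnecessary, and it would not work as stated, because bounding the $W_{xy}$-part as in Case~(2) relied on a movable vertex lying in $I$ and because it ignores the vertices of $C'$ that may lie in $N(D)$.
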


\begin{subproof}
    Consider an independent set $I$ in an arbitrary bag $B'_{t^D}$. 
    If ${I \subseteq B_t \cap N[D]}$, then clearly $|I| \leq \alpha(\mathcal{T}) \leq 4\ell$. 
    So we may assume that~$I$ contains a vertex from $N(D) \setminus U_{xy}$. 
    Since these vertices are complete to $D$ by 
    \zcref{claimcollection1}\ref{claim1-1} and~\ref{claim1-2}, \zcref{claimcollection2}\ref{claim2-7}, and \zcref{claim:D_complete_to_neighborhood_in_C'}, we have that $I \subseteq N(D)$.
    However, since $D\subseteq C''$, it follows that $N(D) \subseteq (N(C'')\cap M)\cup C'$ and, consequently, $N(D) \subseteq B^M_{t_C}$, implying that $I \subseteq B^M_{t_C}$.
    Then, by \zcref{lem:T^M}, we obtain that $|I| \leq 4\ell$. 
\end{subproof}

\paragraph{Putting everything together}

Finally, we summarize the results from above to derive a final contradiction that will finish the proof of \zcref{lem:bagwithNr}.

%    For contradiction, assume that $T$ satisfies the assumptions of \zcref{lem:bagwithNr}, but there is a bad pair of vertices that does not share a bag in $T$.
%    Let $(x,y)$ be a bad pair that maximizes the distance between $t_x$ and $t_y$.
    Consider the tree-decompositions $\mathcal{T}^{M'}$ and $\mathcal{T}^D$ for each component $D$ of $G' - M'$.
    We construct a tree~$T'$ from the union of the tree~$T^M$ with all the trees~$T^D$, by joining, for each~$D$, the node $t^D_x$ of~$T^D$ to the node ${t^M_C}$ of $T^M$, where $C$ is the component of $G' - M$ such that $D \subseteq C$ and $t^M_C$ is the node from \zcref{cl:connectedcomponentscommonbag}. 
    Clearly, $T'$ is a tree. 

    Recall that for each node~$s$ of~$T'$, we already defined a set $B'_s\subseteq V(G')$ as the respective bag of either~$\mathcal{T}^{M'}$ or~$\mathcal{T}^D$. 
    We claim that $\mathcal{T}' = (T', \{ B'_s \}_{s \in V(T')})$ is a tree-decomposition of~$G'$ with $\alpha(\mathcal{T}') \leq 4\ell$ and the property that the number of pairs of neighbors of~$r$ that share a bag is increased compared to $\mathcal{T}$. 
    
    First let us observe that~$\mathcal{T}'$ is indeed a tree-decomposition. 
    Since each vertex and each edge of~$G'$ is contained in either~$G'[M']$ or in~$H_D$ for some component~$D$ of~$G' - M'$, the union of all bags is equal to~$V(G')$ and each pair of adjacent vertices is contained in some common bag. 
    For each vertex~${v \in D}$ for a component~$D$ of~$G' - M'$, if~$v \in B'_s$ for some $s\in V(T')$, then~${s \in V(T^D)}$, hence~$T'(v)$ is connected, since $\mathcal{T}^D$ is a tree-decomposition. 
    Similarly, for each vertex~${v \in M}$, if~$v \in B'_s$ for some $s\in V(T')$, then~${s \in V(T^M)}$, so~$T'(v)$ is connected, since $\mathcal{T}^{M'}$ is a tree-decomposition. 
    So consider a vertex~${v \in M' \setminus M}$. 
    Suppose~$v$ is contained in a bag of~$\mathcal{T}^D$ for some component~$D$ of~${G' - M'}$, and let~$C$ be the component of~$G' - M$ containing~$D$.
    Since $v\in N(D)$, we obtain by \eqref{eq:T^D} that $t_x^D \in T'(v)$, and since $N(D) \subseteq (N(C'')\cap M)\cup C'\subseteq B'_{t^M_C}$ (as observed in the proof of \zcref{claim:alpha-T^D} and by the choice of $t_C^M$), we obtain that $t_C^M \in T'(v)$. 
    Since by construction, $t_x^D t_C^M \in E(T')$, it follows that~$T'(v)$ is connected.  

    Moreover, $\alpha(\mathcal{T}') \leq 4\ell$ follows immediately from the definition of~$\mathcal{T}'$ and \zcref{lem:T^M,claim:alpha-T^D}.

    For the last property, observe first that if two neighbors of~$r$ are contained in a common bag~$B_t$ of~$\mathcal{T}$, then they are both in~$M'$, and by the construction of~$\mathcal{T}^M$,~$\mathcal{T}^{M'}$, and~$\mathcal{T}'$, they are in the bag~$B'_{t^M}$ of~$\mathcal{T}'$. 
    Moreover, $x$ and~$y$ share the bag~$B'_{t_y^M}$, so the number of pairs of neighbors of~$r$ that share a bag is indeed increased compared to $\mathcal{T}$. 

    Hence, the existence of~$\mathcal{T}'$ contradicts our initial assumption that~$\mathcal{P}$ is nonempty. 
    Hence, $\mathcal{T}$ was the desired tree-decomposition of~${G'}$ all along. 
\hfill\qed

\section{Algorithmic consequences}

Let us conclude the paper by discussing some algorithmic consequences of our results.
First, we observe that the proof of \zcref{thm:P5treealpha} can be easily turned into a polynomial-time algorithm.
The only non-obvious part is that on the way we need to find largest independent sets in various induced subgraphs of the input graph,
e.g., to find the vertex $r$ with small $\alpha(N[r])$ or to decide whether bad pairs exist.
However, finding a largest independent set in a $P_5$-free graphs can be done in polynomial time~\cite{DBLP:conf/soda/LokshantovVV14}.
Thus, we obtain the following algorithmic version of \zcref{thm:P5treealpha}.

\begin{theorem}[algorithmic version of \zcref{thm:P5treealpha}]\label{thm:alg}
    There is an algorithm that, given a $P_5$-free graph $G$ and an integer $\ell$,
    in time polynomial in $\abs{V(G)}$ and $\ell$ returns one of the following outputs:
    \begin{itemize}
        \item an induced $K_{\ell,\ell}$ in $G$, or
        \item  a tree-decomposition of $G$ with independence number at most $4\ell$.
    \end{itemize}    
\end{theorem}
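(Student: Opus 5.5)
We turn the inductive proof of \zcref{thm:P5treealpha} into a recursive algorithm and check that every step runs in polynomial time or exposes an induced $K_{\ell,\ell}$. The algorithm recurses on $\abs{V(G)}$. At each stage it must produce one of two things: an induced $K_{\ell,\ell}$ in the current induced subgraph, which is then also induced in $G$, or a vertex $r$ with $\alpha(N[r])<2\ell$. To find $r$, we compute a maximum independent set $I$ of the current graph using the polynomial-time algorithm for \textsc{Independent Set} on $P_5$-free graphs~\cite{DBLP:conf/soda/LokshantovVV14}. This is possible since all graphs we handle are induced subgraphs of $G$ and hence $P_5$-free. We then take any $v\in I$ and compute $\alpha(N[v])$ with the same algorithm.

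If $\alpha(N[v])<2\ell$, we set $r=v$. Otherwise we follow the proof of \zcref{thm:alpha-degeneracy-Sd-Kll} for $d=2$ constructively. Take a maximum independent set $J\subseteq N[v]$ of size at least $2\ell$. Compute a maximum matching $M$ in the bipartite graph on $J\cup(I\setminus\{v\})$ by standard bipartite matching, which by K\"onig's argument has size at least $\abs{J}-1$. If $H'$ contains an induced $2K_2$, then together with $v$ it gives an induced $P_5$, which is impossible; so the neighborhoods of the matched $J$-vertices form a chain. Sorting these vertices by degree, the proof's explicit set $\{x_\ell,\dots,x_{2\ell-1}\}\cup\{M(x_1),\dots,M(x_\ell)\}$ is an induced $K_{\ell,\ell}$, which we output.

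Given $r$, we recurse on $G-r$. The recursion either returns a $K_{\ell,\ell}$, which we pass up, or a tree-decomposition $\mathcal{T}$ of independence number at most $4\ell$. The proof of \zcref{lem:bagwithNr} is effectively an improvement procedure, and we implement it as such. While some pair $x,y\in N(r)$ does not share a bag, we pick $(x,y)$ as in the proof: a bad pair if one exists, and otherwise one maximizing the distance between $T(x)$ and $T(y)$. Deciding badness means computing $\alpha(\NN(x)\setminus\NN(y))$, again one independent-set computation. We then build $\mathcal{T}'$ according to Case~1 or Case~2. Every ingredient of that construction is computable in polynomial time: the sets $M$, $U$, $U_0,U_x,U_y,U_{xy}$, the movable set $U_{\mathrm m}$ (which needs independence numbers), the components of $G'-M$ and $G'-M'$, the minimal extensions of the subtrees $T^M(c)$, and the attachment nodes $t^M_C$.

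The main obstacle is that the proof of \zcref{lem:bagwithNr} argues by contradiction from a decomposition that is extremal for~\ref{it:induction_maximal_dist}. Algorithmically we need two things. First, each step must be guaranteed to succeed, i.e.\ produce a valid $\mathcal{T}'$ with $\alpha\le 4\ell$. The proof's claims establish this whenever no $K_{\ell,\ell}$ or $P_5$ appears, and each time the proof invokes $K_{\ell,\ell}$-freeness (for instance in \zcref{cl:U0new}, \zcref{lem:T^M}, and \zcref{claimcollection2}\ref{claim2-2}) the two complete independent sets are explicit. So if an intermediate check fails, we extract and output the $K_{\ell,\ell}$. Second, the process must stop after polynomially many steps. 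Each step strictly increases the number of pairs of $N(r)$ sharing a bag, so there are at most $\binom{\abs{N(r)}}{2}$ iterations. We also prune duplicate or redundant nodes after each step, so that $\abs{V(T')}$ stays polynomial; otherwise the copying in the construction would blow up the size of the tree. Once all pairs share a bag, the Helly property gives a node whose bag contains $N(r)$, and we attach a leaf with bag $N[r]$. Altogether there are $\abs{V(G)}$ recursion levels, each performing polynomially many polynomial-time operations.
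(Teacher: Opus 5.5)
Your proposal is correct and follows the same route as the paper. The paper simply observes that the inductive proof of \zcref{thm:P5treealpha} runs in polynomial time once every required maximum independent set is computed with the algorithm of~\cite{DBLP:conf/soda/LokshantovVV14}. The details you add fill in points the paper leaves implicit, and they are sound: you run the extremal argument of \zcref{lem:bagwithNr} as an iterative improvement (condition~\ref{it:induction_maximal_dist} is only used for the final contradiction); you extract an explicit $K_{\ell,\ell}$ wherever $K_{\ell,\ell}$-freeness is invoked; and you prune the tree, for example by contracting edges $st$ with $B_s\subseteq B_t$, which preserves~\ref{it:induction_independence} and all shared pairs and leaves at most $\abs{V(G)}$ nodes.
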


In particular, the running time depends only polynomially on $\ell$.
Thus, the algorithm yields a polynomial-time constant-factor-approximation algorithm for computing tree-independence number (and a corresponding decomposition) in $P_5$-free graphs.
Indeed, we run the algorithm for $\ell=1,2,\ldots$ and consider the smallest value for which the algorithm returns the second outcome, i.e., a decomposition of tree-independence number $k^* \leq 4\ell$.
As the input graph $G$ contains $K_{\ell-1,\ell-1}$ as an induced subgraph, we conclude that
\[
    \ell-1 \leq \tin(G) \leq k^* \leq  4\ell \leq 4(\tin(G)+1).
\]

We note that the existence of essentially any non-trivial polynomial-time approximation algorithm (or even any \textsf{FPT}-approximation, when parameterized by the tree-independence number) is excluded by known hardness results~\cite{MR4984710}.

Conversely, the algorithm from \zcref{thm:alg} can be seen as a polynomial-time constant-factor-approximation algorithm for the problem of computing the induced biclique number in a $P_5$-free graph: this time we consider the largest value of $\ell$ for which the algorithm returns the second outcome. 
The existence of a tree-decomposition with independence number at most $4(\ell+1)$ is an evidence that the induced biclique number of $G$ is at most $4(\ell+1)$.

While the problem of computing the induced biclique number received some attention~\cite{conf/stoc/Yannakakis78,journals/jal/DawandeKST01},
we are not aware of any results in $P_5$-free graphs.
We believe that this is an interesting question to investigate.

Finally, let us recall that graphs of bounded tree-independence number have very useful algorithmic properties~\cite{MR4664382,LMMORS24}. Furthermore, the complexity of many such algorithms depends only single-exponentially on the tree-independence number~\cite{lokshtanovqpoly}.
Thus, the bounds on the tree-independence number obtained in this paper can be used to obtain quasipolynomial-time (resp., subexponential-time) algorithms for various problems in $P_5$-free graphs of polylogarithmic (resp., sublinear) induced biclique number.

\section*{Acknowledgements}
The project was initiated at the workshop Homonolo 2024. We are grateful to the organizers and other participants, especially Peter Strulo, for productive atmosphere and inspiring discussions.
We are also grateful to Olivia Milani\v{c} whose patience significantly helped in finishing this paper.

Václav Blažej was supported by the  project BOBR that has received funding from the European Research Council (ERC) under the European Union’s Horizon 2020 research and innovation programme (grant agreement No. 948057). 
J.~Pascal Gollin was supported in part by the Slovenian Research and Innovation Agency (research project N1-0370). 
Tomáš Hons was supported by Project 24-12591M of the Czech Science Foundation (GA\v{C}R).
Tomáš Masařík was supported by the Polish National Science Centre SONATA-17 grant number 2021/43/D/ST6/03312.
Martin Milanič was supported in part by the Slovenian Research and Innovation Agency (I0-0035, research program P1-0285 and research projects J1-60012, J1-70035, J1-70046, and N1-0370) and by the research program CogniCom (0013103) at the University of Primorska.
Paweł Rzążewski was supported by the National Science Centre grant number 2024/54/E/ST6/00094.
Ondřej Suchý was co-funded by the European Union under the project Robotics and advanced industrial production (reg. no. CZ.02.01.01/00/22\_008/0004590).
Alexandra Wesolek was supported by the Deutsche Forschungsgemeinschaft (DFG, German Research Foundation) under Germany's Excellence Strategy – The Berlin Mathematics Research Center MATH+ (EXC-2046/1, project ID:390685689).

\printbibliography

\end{document}